\pgfplotsset{compat=1.14}
\newtheorem{theorem}{Theorem}[section]
  \newtheorem{lemma}[theorem]{Lemma}
   \newtheorem{corollary}[theorem]{Corollary}
  \newtheorem{proposition}[theorem]{Proposition}
 \theoremstyle{definition}
 \newtheorem{definition}[theorem]{Definition} 
   \newtheorem{question}[theorem]{Question}
  \newtheorem{remark}[theorem]{Remark}
\newtheorem{exampleth}[theorem]{Example}
\newenvironment{example}{\begin{exampleth}}{\hfill $\diamond$\\ \end{exampleth}}
\DeclareMathOperator{\GL}{GL}
\DeclareMathOperator{\Gr}{Gr}
\DeclareMathOperator{\codim}{codim}
\DeclareMathOperator{\conv}{conv}
\DeclareMathOperator{\trop}{trop}
\DeclareMathOperator{\im}{Im}
\DeclareMathOperator{\var}{var}
\DeclareMathOperator{\varbar}{\overline{var}}
\DeclareMathOperator{\init}{in}
\DeclareMathOperator{\tinit}{t-in}
\DeclareMathOperator{\spann}{span}
\DeclareMathOperator{\sgn}{sgn}
\DeclareMathOperator{\supp}{supp}
\DeclareMathOperator{\convex}{convex}
\newcommand{\R}{\mathbb{R}}
\newcommand{\Q}{\mathbb{Q}}
\newcommand{\Z}{\mathbb{Z}}
\newcommand{\C}{\mathbb{C}}
\newcommand{\m}{\mathfrak{m}}
\newcommand\ol{\overline}
\renewcommand{\P}{\mathbb{P}}
\begin{document}
\title[Positively Hyperbolic Varieties, Tropicalization, and Positroids]{Positively Hyperbolic Varieties, Tropicalization,\\ and Positroids}

\author{Felipe Rinc\'on}
\address{School of Mathematical Sciences, Queen Mary University of London, London E1 4NS, UK.}
\email{f.rincon@qmul.ac.uk}

\author{Cynthia Vinzant}
\address{North Carolina State University, Department of Mathematics,Raleigh, NC 27695, USA.}
\email{clvinzan@ncsu.edu}

\author{Josephine Yu}
\address{School of Mathematics, Georgia Tech, Atlanta GA 30332, USA.}
\email{jyu@math.gatech.edu}

%
% \thanks {\emph{2010 Mathematics Subject Classification:}  14P05, 14T05, \note{???}}

\begin{abstract}
A variety of codimension $c$ in complex affine space is called positively hyperbolic if the imaginary part of any point in it 
does not lie in any positive linear subspace of dimension $c$.  
Positively hyperbolic hypersurfaces are defined by stable polynomials.  
We give a new characterization of positively hyperbolic varieties
using sign variations, and show that they are equivalently defined by being hyperbolic with respect
to the positive part of the Grassmannian, in the sense of Shamovich and Vinnikov.  
We prove that positively hyperbolic projective varieties have tropicalizations that are locally subfans of the type $A$ hyperplane
arrangement defined by $x_i = x_j$, 
in which the maximal cones satisfy a non-crossing condition.   
This gives new proofs of some results of  Choe--Oxley--Sokal--Wagner and Br\"and\'en on Newton polytopes and tropicalizations of stable polynomials.
We settle the question of which tropical varieties can be obtained as tropicalizations of positively hyperbolic varieties
in the case of tropical toric varieties, constant-coefficient tropical curves, and 
Bergman fans. 
Along the way, we also give a new characterization of positroids in terms of a non-crossing condition on their Bergman fans.
\end{abstract}

\maketitle 

\section{Introduction}

There are many beautiful appearances of real rooted-ness in combinatorics, see e.g. \cite{BrandenSurvey, VisontaiThesis}. 
Hyperbolicity and stability are generalizations of real rooted-ness for multivariate polynomials. 
A polynomial $f \in \C[x_1,\dots,x_n]$ is called {\em stable}  if $f(z) \neq 0$ for every point $z\in \C^n$ with $\im(z) \in \R_+^n$. 
Here $\im(z) = (\im(z_1), \dots, \im(z_n))$ denotes the imaginary part of $z$, and $\R_+$ denotes the set of positive real numbers.  
In other words, $f$ is stable if its roots do not lie in the ``upper half-plane'', 
and so stable polynomials are also called {\em polynomials with the half-plane property}.  
A univariate real polynomial is stable if and only if it is real-rooted.  More generally, a real polynomial $f$ is stable if and only if any line in any positive direction intersects the hypersurface $f = 0$ only at real points, that is, for any $w \in \R^n$ and $v \in \R_+^n$ the univariate polynomial $f(tv+w)\in \R[t]$ has only real roots.  

Another closely related generalization of real rooted-ness is \emph{hyperbolicity}, which can be thought of as a coordinate-free version of stability for real projective hypersurfaces.  
Concretely, a homogeneous polynomial $f\in \R[x_1,\dots,x_n]$ is called {\em hyperbolic} with respect to a point $v \in \R^n$ if $f(v)\neq 0$ and for any $w \in \R^n$ the univariate polynomial $f(tv+w)$ has only real roots.  Note that a homogeneous polynomial with real coefficients is stable if and only if it is hyperbolic with respect to every point $v \in \R_+^n$.  

The coordinate-dependent stability property imposes a rich structure on the coefficients of a stable polynomial with respect to the monomial basis. 
Choe, Oxley, Sokal, and Wagner showed that the Newton polytope of a homogeneous multiaffine stable polynomial must be a matroid polytope~\cite{COSW}.  Br\"and\'en generalized this result by showing that the Newton polytope of any homogeneous stable polynomial must be a {\em generalized permutohedron}, also known as an {\em M-convex polytope}.  These are the polytopes whose edges are in directions $e_i - e_j$, or equivalently, their normal fans coarsen the permutohedral fan.  Even more generally, Br\"and\'en showed that if $f$ is a stable polynomial over a real field with a non-Archimedean valuation, then the valuations of the coefficients of $f$ must form an $M$-concave function on its support, that is, all the faces of the induced regular subdivision must be $M$-convex~\cite{Branden2}. 

The notion of hyperbolicity was extended to projective varieties of codimension more than one by Shamovich and Vinnikov~\cite{SV}. 
Let $L \subseteq \P^{n}(\C)$ be a linear subspace defined over $\R$ of projective dimension $c-1$.
A  real projective variety $X \subseteq \P^{n}(\C)$ of codimension $c$ is said to be {\em hyperbolic with respect to $L$}
if  $X \cap L = \emptyset$ and for all real linear subspaces $L' \supset L$ of dimension $c$, the intersection $X \cap L'$ consists only of real points.
Just as the hyperbolicity of a polynomial $f$ can be certified by a determinantal representation $f = \det(A(x))$ where 
$A(x) = \sum_{i=1}^nx_iA_i$ where the matrices $A_i$ are real symmetric and $A(e)$ is positive definite, 
the hyperbolicity of a variety can be certified by a Livsic-type determinantal representation, which gives a definite determinantal representation of its Chow form~\cite{SV}. 
Hyperbolic varieties have also been studied in the context of \emph{real-fibered morphisms}, e.g. \cite{KS1, KS2, KShaw}. 
Kummer and Vinzant study the reciprocal linear space $L^{-1}$ of a real linear subspace $L$, which is hyperbolic with respect to $L^\perp$~\cite{KV}.

In this paper we study an analogue of stability called \emph{positive hyperbolicity} for varieties of 
codimension greater than one. For real projective varieties, this is equivalent to hyperbolicity 
with respect to all linear spaces in the positive Grassmannian. 

Our main goal is to explore the combinatorial structure of positively hyperbolic varieties through tropical geometry.  The tropicalization of a variety, defined in Section~\ref{sec:tropical}, is a polyhedral complex that sees various discrete invariants of the variety.  In particular, if $V$ is a hypersurface, the Newton polytope of the defining polynomial of $X$ can be recovered from the tropicalization of $X$.  For arbitrary varieties, tropicalization can be considered as a generalization of the notion of Newton polytope.

In Section~\ref{sec:background} we give a characterization of positively hyperbolic varieties in terms of sign variations.
They can also be characterized by their imaginary projections studied by J\"orgens, Theobald, and de Wolff in the hypersurface case~\cite{ImagProj}.
In Section~\ref{sec:preservers} we discuss different operations that preserve positive hyperbolicity, such as initial degenerations and certain linear transformations. 

\begin{figure}[ht]
\centering
\includegraphics[height=1.5in]{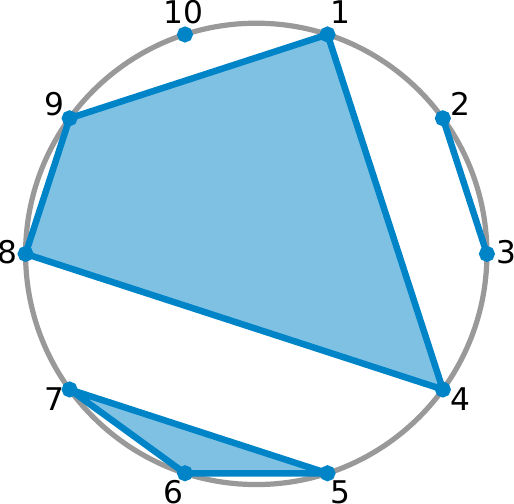}
\caption{The partition $\{\{1,4,8,9\},\{2,3\},\{5,6,7\},\{10\}\}$ is non-crossing.}
\label{fig:noncrossing}
\end{figure}

In Section~\ref{sec:tropical} we study tropicalizations of positively hyperbolic varieties. 
One of the key combinatorial structures that appear in this context 
%are \emph{positroids} and 
is \emph{non-crossing partitions}. 
See Figure~\ref{fig:noncrossing} for an example and 
Section~\ref{sec:tropical} for a formal definition. 
We characterize toric varieties that are positively hyperbolic and we use this to obtain the following main result. 

\newtheorem*{thm:tropical}{Theorem \ref{thm:tropical}}
\begin{thm:tropical}
If $X \subset \mathbb C^{n}$ is a positively hyperbolic variety then the linear subspace parallel to any maximal face of $\trop(X)$ is spanned by $0/\pm 1$ vectors whose supports form a non-crossing partition of a subset of the set $[n] = \{1,\dots,n\}$. 
If, in addition, $X$ is homogeneous, then the linear subspace parallel to any maximal face must be spanned just by $0/1$ vectors whose supports form a non-crossing partition of $[n]$.
\end{thm:tropical}

\begin{figure}[ht]
\begin{center}
\includegraphics[height=2in]{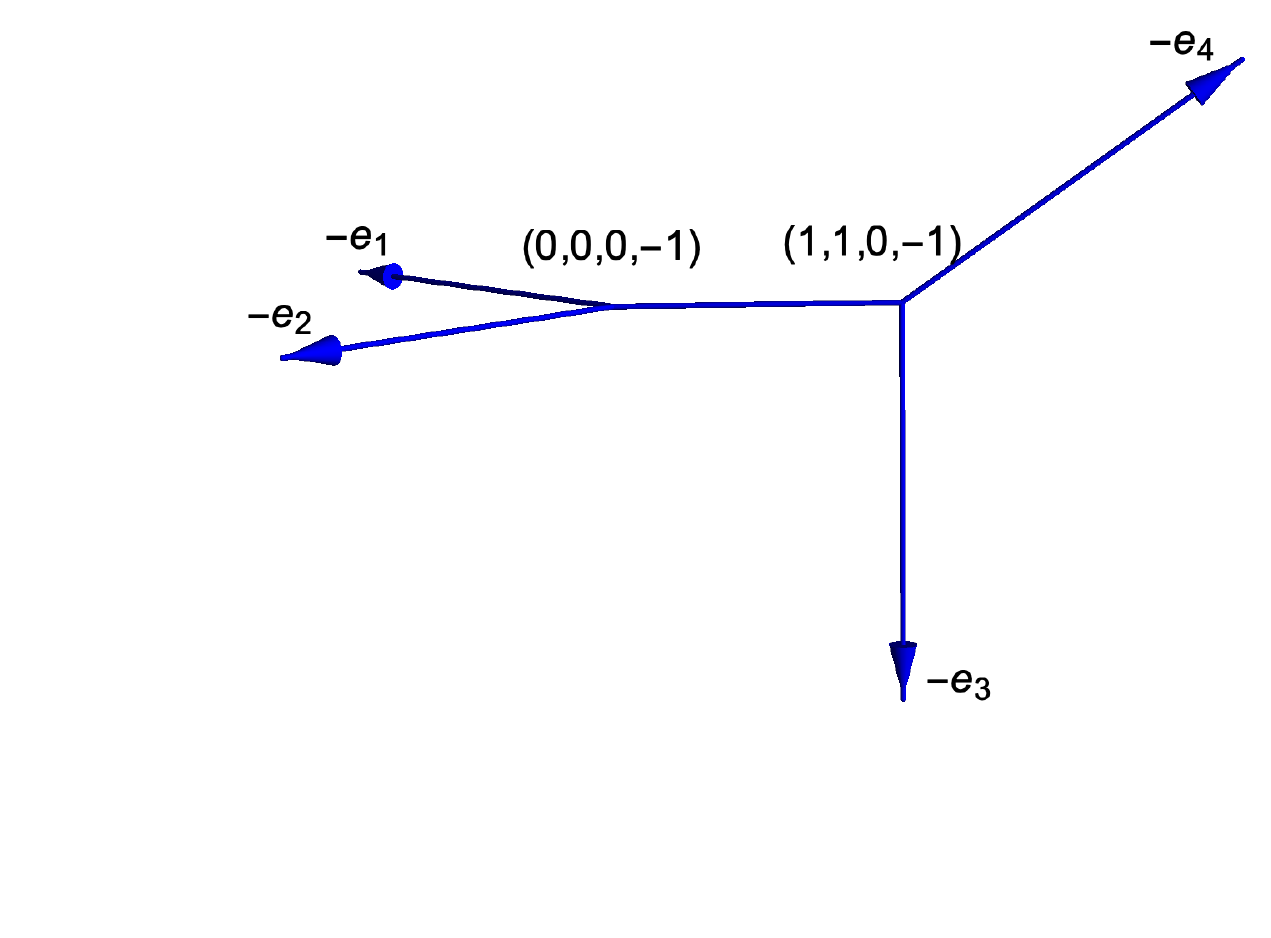}
\end{center}
\caption{The tropicalization of the reciprocal plane in Example~\ref{ex:reciprocalPlane}.} 
\label{fig:RecipLine}
\end{figure}

\begin{example}\label{ex:reciprocalPlane}
Let $\C\{\!\{t\}\!\}$ denote the field of Puiseux series. Consider the two-dimensional plane $\mathcal{L}$ in $\C\{\!\{t\}\!\}^4$ defined by 
\[
x_1+x_2+x_3+x_4 = 0 \ \ \text{ and }\  t^3x_1 +t^2x_2 + tx_3 + x_4 =0,
\]
and the reciprocal plane $\mathcal{L}^{-1}$ obtained by taking the image of $\mathcal{L}$ 
under the rational map $(x_1, x_2, x_3, x_4)\mapsto (x_1^{-1},x_2^{-1},x_3^{-1},x_4^{-1})$.
It follows from \cite[Cor. 1.5]{KV} that $\mathcal{L}^{-1}$ is a positively hyperbolic variety. This also follows from 
Proposition~\ref{prop:linearstable} and Corollary~\ref{cor:simplePreservers} below. 
The tropicalization of $\mathcal{L}^{-1}$ is a two-dimensional polyhedral complex in $\R^4$ 
with five maximal faces, all having the one-dimensional lineality space $\R(1,1,1,1)$. 
Its image in $\R^4/\R(1,1,1,1) \cong \R^3$ is shown in Figure~\ref{fig:RecipLine}. 
One maximal face is $\{(\lambda+\mu, \lambda+\mu, \lambda , \lambda  -1) : 0 \leq \lambda \text{ and } 0\leq \mu \leq 1\}$.
The linear subspace parallel to it is spanned by the vectors $(1,1,0,0)$ and $(0,0,1,1)$, which are 0/1 vectors 
whose supports form a non-crossing partition of $\{1,2,3,4\}$, namely $\{1,2\} \sqcup \{3,4\}$. 
One can check that the linear subspaces parallel to the other four maximal faces give rise to 
non-crossing partitions of the form $\{i\}\sqcup \{j,k,\ell\}$. Theorem~\ref{thm:tropical} prevents
the appearance of any cone in direction spanned by the vectors $(1,0,1,0)$ and $(0,1,0,1)$, as this would
correspond to the crossing partition $\{1,3\} \sqcup \{2,4\}$.
\end{example}

One immediate corollary is that if $X$ is a positively hyperbolic projective variety defined over a trivially valued field, then $\trop(X)$ is a subfan of the permutohedral fan given by the type $A$ braid arrangement $x_i = x_j$ for $i \neq j$.
The results of Choe--Oxley--Sokal--Wagner and Br\"and\'en mentioned above about the combinatorics of stable polynomials follow immediately from this result.  We also prove that the Chow polytopes of positively hyperbolic projective varieties are generalized permutohedra (i.e. $M$-convex polytopes).

While not all matroids appear as the support of stable polynomials (see \cite[Theorem~6.6]{Branden1}), there are three cases 
in which all polyhedral fans that have the correct combinatorial properties are actually realizable as tropicalizations of positively hyperbolic varieties.  
The first case is given by usual linear spaces, which are tropicalizations of toric varieties given by monomial parameterizations, as we will see in Section~\ref{sec:toric}.
The second is the case of fan tropical curves, corresponding to one-dimensional balanced polyhedral fans in $\R^n/\R(1,1,\dots,1)$, which we study in Section~\ref{sec:curves}. 
The last case is Bergman fans of matroids, corresponding to tropical varieties of degree one, discussed in Section~\ref{sec:Positroids}.

Given the special role of the positive Grassmannian, it is not surprising that the combinatorics of 
positively hyperbolic varieties is closely related to that of \emph{positroids}, which are matroids represented by linear subspaces in the nonnegative part of the Grassmannian.  We will show in Section~\ref{sec:algebraicmatroids} that the algebraic matroid of a positively hyperbolic variety is a positroid.
Ardila, Rinc\'on, and Williams showed in \cite{ARW1} that positroids are {\em non-crossing matroids}, that is, the ground sets of their connected components form a non-crossing partition. Moreover, they provided a characterization of positroids in terms of their matroid polytopes: A matroid is a positroid if and only if all the facets of its matroid polytope are non-crossing matroids.
In Proposition \ref{pro:positroidfacets} we extend their result to a characterization of positroids in terms of loopless faces of their matroid polytopes, and connect this to positive hyperbolicity.

\newtheorem*{thm:positroids}{Theorem \ref{thm:mainequivalencepositroids}}
\begin{thm:positroids}
Let $M$ be a loopless matroid on the set $[n] = \{1, \dots, n\}$. The following are equivalent statements:
\begin{enumerate}[label=(\alph*)]
\item The Bergman fan $\mathcal B(M)$ is the tropicalization of a positively hyperbolic variety.
\item $M$ is a positroid.
\item The linear subspace parallel to any maximal cone of $\mathcal B(M)$ is spanned by 0/1 vectors whose supports form a non-crossing partition.
\end{enumerate}
The word {\it maximal} can be removed in condition (c) above; see Remark \ref{rem:maximal}.
\end{thm:positroids}

\medskip
\noindent {\bf Acknowledgements.}  We thank Sergi Elizalde, Mario Kummer, Rainer Sinn, and Lauren Williams 
for helpful comments and discussions. This work started when FR and JY
were members of the Geometric and Topological Combinatorics program at
the Mathematical Sciences Research Institute and CV was a
member of the Discrete and Continuous Optimization program at the
Simons Institute in Berkeley during Fall 2017. It was later continued during the Fall 2018 Nonlinear Algebra program at the Institute for Computational and Experimental Research in Mathematics, in which the three authors took part.  FR was partially supported by the Research
Council of Norway grant 239968/F20.  
CV was supported by the US NSF-DMS grant \#1620014. 
JY was supported by the US
NSF-DMS grant \#1600569.

\section{Positively hyperbolic varieties}
\label{sec:background}

The \emph{positive Grassmannian} $\Gr_+(c,n)$ (resp.\ \emph{nonnegative Grassmannian} $\Gr_{\geq 0}(c,n)$) consists of $c$-dimensional linear subspaces of $\R^n$ all of whose Pl\"ucker coordinates are strictly positive (resp.\ nonnegative). In other words, a $c$-dimensional linear subspace $L \subset \mathbb R^n$ is called \emph{positive} (resp.\ \emph{nonnegative}) if it can be represented as the row space of a matrix in $\mathbb R^{c \times n}$ all of whose maximal minors are positive (resp.\ nonnegative).
Note that these notions depend heavily on the ordering of coordinates of $\R^n$.  

\begin{definition}
Let  $X\subset \C^n$ be a variety which is equidimensional of
codimension $c \leq n-1$. 
We call $X$ {\bf positively hyperbolic} if for every linear subspace $L$ in the positive Grassmannian $\Gr_+(c,n)$ 
and every $x \in X$, the imaginary part ${\rm Im}(x)$ does not belong to $L \backslash \{0\}$. We say that a projective variety in $\P^{n-1}$ is positively hyperbolic if its affine cone in $\C^n$ is.
\end{definition}

An \emph{equidimensional} variety is a variety whose irreducible components 
all have the same dimension.  It follows from the definition that an equidimensional variety $X$ 
is positively hyperbolic if and only if all of its irreducible components are. 

In \cite[\S 1]{KV}, projective positively hyperbolic varieties are considered 
and are called {\em stable} varieties.  
Here we avoid the terminology ``stable varieties'' both because of a
slight discrepancy in the definitions for affine hypersurfaces 
and because of the prevalence of the term ``stable'' in algebraic and tropical geometry.

When $c=1$, $X$ is a hypersurface, and the positive Grassmannian $\Gr_+(1,n)$ consists of lines spanned by vectors in the open positive orthant $\R_+^n$.  
In this case, positive hyperbolicity is almost the same as stability. 

\begin{proposition}
\label{prop:connectiontostable}
Let $f\in \C[x_1, \hdots, x_n]$. 
If the hypersurface $V(f)$ defined by $f=0$ is positively hyperbolic, then $f$ is stable. Furthermore, 
if $f$ is either homogeneous or has real coefficients, then $f$ is stable if and only $V(f)$ is positively hyperbolic. 
\end{proposition}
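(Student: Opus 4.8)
The plan is to unwind the definitions and relate the condition ``$\im(x) \in L \setminus \{0\}$ for some $L \in \Gr_+(1,n)$'' to the vanishing of $f$ at a point with imaginary part in $\R_+^n$. First I would observe that a line $L \in \Gr_+(1,n)$ is exactly a line $\R v$ with $v \in \R_+^n$ (up to scaling, one may take $v \in \R_+^n$), so that the affine cone $V(f)$ being positively hyperbolic means: there is no $x \in \C^n$ with $f(x) = 0$ and $\im(x) \in \R_+^n \cup (-\R_+^n)$. For the first implication, suppose $V(f)$ is positively hyperbolic; I want to show $f$ is stable, i.e. $f(z) \neq 0$ whenever $\im(z) \in \R_+^n$. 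This is immediate: if $f(z) = 0$ with $\im(z) \in \R_+^n$, then taking $x = z$ shows $\im(x) = \im(z)$ lies on the positive line $\R \cdot \im(z) \in \Gr_+(1,n)$, contradicting positive hyperbolicity. (One must check $\im(z) \neq 0$, which holds since its coordinates are strictly positive.) So stability follows directly, with no hypothesis on $f$.

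For the converse, assume $f$ is stable and either homogeneous or has real coefficients; I must rule out any $x \in V(f)$ with $\im(x)$ spanning a positive line, i.e. $\im(x) = \lambda v$ for some $v \in \R_+^n$ and $\lambda \in \R \setminus \{0\}$. If $\lambda > 0$, then $\im(x) \in \R_+^n$ and stability directly gives $f(x) \neq 0$. The only remaining case is $\lambda < 0$, i.e. $\im(x) \in -\R_+^n$; here I need the extra hypothesis. If $f$ has real coefficients, then $f(x) = 0 \iff f(\bar x) = 0$, and $\im(\bar x) = -\im(x) \in \R_+^n$, so stability applied to $\bar x$ gives a contradiction. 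If $f$ is homogeneous of degree $d$, then $f(-x) = (-1)^d f(x)$, so $f(x) = 0 \iff f(-x) = 0$, and $\im(-x) = -\im(x) \in \R_+^n$, again contradicting stability. In either case no such $x$ exists, so $V(f)$ is positively hyperbolic.

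I do not expect any serious obstacle here; the proof is essentially a bookkeeping exercise identifying $\Gr_+(1,n)$ with positive rays and then handling the sign ambiguity of the spanning vector. The one subtlety worth stating carefully is precisely why the extra hypothesis (homogeneous or real coefficients) is needed: a general complex polynomial can be stable while failing positive hyperbolicity because its zero set may meet the locus $\im(x) \in -\R_+^n$ without meeting $\im(x) \in \R_+^n$ — the two symmetry arguments above are exactly what close this gap, and it is worth remarking that without such a hypothesis the forward implication can be strict (matching the ``slight discrepancy in the definitions for affine hypersurfaces'' mentioned earlier in the paper).
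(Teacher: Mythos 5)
Your proposal is correct and follows essentially the same route as the paper: identify $\Gr_+(1,n)$ with lines spanned by vectors in $\R_+^n$ (so the relevant imaginary parts lie in $\R_+^n\cup(-\R_+^n)$), note that stability fails exactly when some zero has imaginary part in $\R_+^n$, and handle the negative orthant via the symmetry $x\mapsto \bar x$ (real coefficients) or $x\mapsto -x$ (homogeneous). No gaps; your closing remark about why the extra hypothesis is needed matches the paper's discussion of examples like $x_1+x_2+i$.
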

\begin{proof}
If ${\rm Im}(x)$ belongs to $\R_+^n$ for some $x\in V(f)$, then it is a nonzero point of the linear subspace $L = {\rm span}\{ {\rm Im}(x)\}$
in the positive Grassmannian $\Gr_+(1,n)$.  Therefore if $f$ is not stable, then $V(f)$ cannot be positively hyperbolic. 

If $f$ is either real or homogeneous, then $X=V(f)$ has the property that for any $x\in X$ there exists a point $y\in X$ such that ${\rm Im}(y) = -{\rm Im}(x)$, 
namely $y = \overline{x}$ if $f$ is real and $y = -x$ if $f$ is homogeneous.  
The nonzero points of the lines in $\Gr_+(1,n)$ are exactly those points in the positive and negative open orthants $\R_+^n\cup \R_-^n$. 
Therefore, there exists a point $x\in X$ with ${\rm Im}(x)\in L(\R)\backslash \{0\}$ for some $L\in \Gr_+(1,n)$ 
if and only if there is a point $x\in X$ with ${\rm Im}(x)\in \R_+^n$. 
\end{proof}

When $f$ is neither real nor homogeneous, positive hyperbolicity and
stability do not always coincide, because stability distinguishes
between the upper and lower half planes of the complex
numbers while positive hyperbolicity does not. 
For example, $x_1+x_2+i \in \C[x_1,x_2]$ is stable but is not
positively hyperbolic.  Its complex variety 
contains a point $(1-i/2, -1-i/2)$ whose imaginary part $(-1/2,-1/2)$ is contained in a linear subspace in $\Gr_+(1,2)$, namely $\C\{(1,1)\}$. 
It would be desirable to have a definition of positive hyperbolicity 
that generalizes stability in all cases, but we leave that for future work. 

\begin{remark}
For projective varieties one could also define positive hyperbolicity 
using real parts instead of imaginary parts, since $x\in X$ if and only if $i\cdot x\in X$. 
That is, a projective variety is positively hyperbolic if and only if for every point $x$ in its affine cone, 
${\rm Re}(x)\not\in L\backslash\{0\}$ for all $L\in \Gr_+(c,n)$. 
\end{remark}

The points in nonnegative/positive linear subspaces can be
characterized using sign variations, as shown by Gantmaher and Kre\u{\i}n~\cite{GK}.  For a vector $v = (v_1,
\dots, v_n) \in \R^n$, let $\var(v)$ be the number of sign changes in
the sequence $v_1, v_2, \dots, v_n$ after discarding any zeroes, and
let $\varbar(v)$ be the number of sign changes in the sequence $v_1,
v_2, \dots, v_n$ where the zeroes are assigned signs that maximize the
number of sign changes.  For example, $\var(1,0,0,1,-1)=1$ and  $\varbar(1,0,0,1,-1)=3$.
\begin{theorem}[\cite{GK, KWZ}]
\label{thm:GK}
  For $L \in \Gr(c,n)$, we have
  \begin{enumerate}
  \item $L \in \Gr_{\geq 0}(c,n) \Leftrightarrow \var(v) < c \text{ for all }v \in L \backslash\{0\} \Leftrightarrow \varbar(w) \geq c \text{ for all } w \in L^\perp \backslash\{0\}.$
  \item $L \in \Gr_{+}(c,n) \  \Leftrightarrow \varbar(v) < c \text{ for all }v \in L  \backslash\{0\} \Leftrightarrow \var(w) \geq c \text{ for all } w \in L^\perp \backslash\{0\}.$
  \end{enumerate}
\end{theorem}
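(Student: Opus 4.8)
This is a classical result in the theory of total positivity (the statement for $\var$ is essentially due to Gantmacher--Kre\u{\i}n, and the refinement using $\varbar$ appears in the work cited as \cite{KWZ}); here is how I would organize a proof.

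\emph{Reduction to a core statement.} Represent a $c$-dimensional $L$ as the row space of $A\in\R^{c\times n}$ with columns $a_1,\dots,a_n\in\R^c$, so that every $v\in L$ equals $uA$ for a unique $u\in\R^c$, with $v_j=\langle u,a_j\rangle$, and $p_I(L)=\det(a_{i_1},\dots,a_{i_c})$ for $I=\{i_1<\dots<i_c\}$. The whole theorem follows from the two \emph{core} equivalences: $L\in\Gr_{\geq 0}(c,n)$ iff $\var(v)<c$ for all $v\in L\setminus\{0\}$, and $L\in\Gr_{+}(c,n)$ iff $\varbar(v)<c$ for all $v\in L\setminus\{0\}$. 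Indeed, the conditions on $L^\perp$ follow from these by the standard sign-twisted duality on the Grassmannian: for the alternating-sign involution $\sigma(x)_i=(-1)^i x_i$, the relation between the maximal minors of $L$ and of $L^\perp$ (complementary subsets, up to explicit signs) shows that $\sigma(L^\perp)\in\Gr(n-c,n)$ is nonnegative (resp.\ positive) exactly when $L$ is, and one has the elementary identities $\var(\sigma(x))=(n-1)-\varbar(x)$ and $\varbar(\sigma(x))=(n-1)-\var(x)$ for $x\neq 0$. Applying the core to $\sigma(L^\perp)$ and substituting $x=\sigma(w)$ then converts ``$\var(\,\cdot\,)<n-c$ on $\sigma(L^\perp)$'' into ``$\varbar(w)\geq c$ for all $w\in L^\perp\setminus\{0\}$'', and similarly in the positive case.

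\emph{Forward directions of the core.} Suppose $L\in\Gr_+(c,n)$ and $v=uA\in L\setminus\{0\}$; I claim $\varbar(v)<c$. If not, fill the zero coordinates of $v$ to get $\tilde v$ with $\var(\tilde v)=\varbar(v)\geq c$, fix $p$ with $v_p\neq 0$, and use the exact splitting $\var(\tilde v)=\var(\tilde v_1,\dots,\tilde v_p)+\var(\tilde v_p,\dots,\tilde v_n)$ to extract an alternating subsequence of $\tilde v$ of length $c+1$, at positions $i_0<\dots<i_c$, passing through $p$; write $\sgn(\tilde v_{i_j})=\varepsilon(-1)^j$. The Cramer dependence $\sum_{j=0}^c(-1)^j p_{I\setminus\{i_j\}}(L)\,a_{i_j}=0$ (with $I=\{i_0,\dots,i_c\}$), paired with $u$, gives $\sum_{j=0}^c(-1)^j p_{I\setminus\{i_j\}}(L)\,v_{i_j}=0$; the terms with $v_{i_j}=0$ vanish, while each remaining term equals $\varepsilon\,|p_{I\setminus\{i_j\}}(L)\,v_{i_j}|$ (using $p_{I\setminus\{i_j\}}(L)>0$ and $\sgn(v_{i_j})=\varepsilon(-1)^j$), so the sum equals $\varepsilon$ times a positive real -- nonzero because the index $j$ with $i_j=p$ contributes such a term -- a contradiction. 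For $L\in\Gr_{\geq 0}(c,n)$ I pass to the limit: $\Gr_{\geq 0}(c,n)$ is the closure of $\Gr_+(c,n)$, so $L=\lim_m L_m$ with $L_m\in\Gr_+$; if some $v\in L\setminus\{0\}$ had $\var(v)\geq c$, its alternating subsequence (on coordinates where $v$ is nonzero, hence still nonzero with the same signs for nearby vectors) persists in $v_m\in L_m$ with $v_m\to v$, so $\varbar(v_m)\geq\var(v_m)\geq c$ for large $m$, contradicting the positive case. (One can instead avoid the closure fact by a direct induction on $(c,n)$, handling the degenerate case where all the $p_{I\setminus\{i_j\}}(L)$ vanish by restricting to the $c\times(c+1)$ submatrix on the columns $I$.)

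\emph{Converse directions, and the main obstacle.} I argue the contrapositive of ``$\var(v)<c$ on $L\setminus\{0\}\Rightarrow L\in\Gr_{\geq 0}$''. If $L\notin\Gr_{\geq 0}$ then, after rescaling rows so some $p_{I_0}(L)>0$, there exist $I,J$ with $p_I(L)>0>p_J(L)$, and walking along the exchange graph via three-term Pl\"ucker relations reduces to $I=S\cup\{a\}$, $J=S\cup\{b\}$ with $|S|=c-1$, $a\neq b$. Since $p_{S\cup\{a\}}(L)\neq 0$, the space of $v\in L$ vanishing on $S$ is a line, and by Cramer $v_m=\pm p_{S\cup\{m\}}(L)$ for $m\notin S$ -- the sign governed by the number of elements of $S$ below $m$ -- and $v_m=0$ on $S$; assuming for concreteness $c\leq n/2$, a direct count of the sign changes of $(v_m)_{m\notin S}$, in which the single sign discrepancy forced by $p_I(L)\,p_J(L)<0$ is exactly matched by the parities contributed by elements of $S$ between consecutive indices, shows $\var(v)\geq c$. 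The positive case is analogous, with the added easy observation that if some $p_I(L)=0$ then the vector vanishing on $I\setminus\{i\}$ in fact vanishes on all of $I$, and a vector with $\geq c$ zero coordinates always satisfies $\varbar(v)\geq c$. The step I expect to be hardest is precisely this last one: exhibiting the bad vector, carrying out the sign-change bookkeeping that brings it up to the threshold $c$, and -- above all -- covering the complementary range $c>n/2$ together with the degenerate configurations (vanishing Pl\"ucker coordinates, non-generic slices), which in the treatments of Gantmacher--Kre\u{\i}n and \cite{KWZ} is handled by a further induction on $n$. Once the converse is in hand, the forward implications above and the duality reduction of the first paragraph complete the proof.
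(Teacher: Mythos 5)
First, note that the paper does not prove this theorem at all: it is quoted from Gantmacher--Kre\u{\i}n and \cite{KWZ}, so your proposal has to stand on its own. The parts of your argument that do stand are the duality reduction (the identities $\var(\sigma(x))+\varbar(x)=n-1$ and the fact that $L\mapsto\sigma(L^\perp)$ preserves nonnegativity/positivity are correct and do reduce everything to the two ``core'' equivalences) and the forward directions: the alternating-subsequence-through-$p$ argument paired with the relation $\sum_j(-1)^j p_{I\setminus\{i_j\}}(L)\,v_{i_j}=0$ is exactly the standard proof that $L\in\Gr_+(c,n)$ forces $\varbar(v)<c$, and the limit argument (or your proposed induction) handles $\Gr_{\geq 0}$.

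The genuine gap is in the converse directions, and it is not just the deferred bookkeeping: the central claim of your construction for part (1) is false. Take $c=2$, $n=4$, and $L$ the row span of $\begin{pmatrix}1&0&0&0\\0&1&-1&-1\end{pmatrix}$, so $p_{12}=1>0>p_{13}=-1$ and all other Pl\"ucker coordinates are $-1$ or $0$; here $I=\{1,2\}$, $J=\{1,3\}$, $S=\{1\}$, and $c\le n/2$. The line of vectors in $L$ vanishing on $S$ is spanned by $v=(0,-1,1,1)$, exactly your Cramer vector, and $\var(v)=1<c$. The theorem of course still holds for this $L$ (e.g.\ $(1,-1,1,1)\in L$ has $\var=2$), but your recipe does not find such a vector: the $c-1$ zeros on $S$ contribute nothing to $\var$, so the ``parities contributed by elements of $S$ between consecutive indices'' can only be cashed in for $\varbar$, i.e.\ for part (2). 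There the idea does work, but the count you wave at still has to be done carefully (a block of $j$ zeros between two nonzero entries yields $j+1$ changes only when the signs disagree with the parity, and $j$ otherwise; summing gives $\varbar(v)\ge(c-1)+1=c$ precisely when some consecutive pair of nonzero coordinates of $v$ carries a sign discrepancy). For part (1) you need a genuinely different argument: either Gantmacher--Kre\u{\i}n's induction on $n$, or deduce it from part (2) by hitting $L$ with totally positive matrices $G_t\to\mathrm{Id}$ (variation diminishing gives $\varbar(G_tv)\le\var(v)<c$, so $G_tL\in\Gr_+(c,n)$, and $L=\lim_t G_tL\in\Gr_{\geq 0}(c,n)$). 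Relatedly, ``assume $c\le n/2$ for concreteness'' is not a legitimate reduction, since your duality exchanges the two halves of each part rather than $c$ and $n-c$ within the same condition; since you yourself flag this step as the hardest and leave it to an unspecified induction, the converse half of the theorem remains unproved as written.
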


Karp uses this to give the following characterization of points in positive linear subspaces: 

\begin{lemma}\cite[Lemma~4.1]{Karp}\label{lem:Karp}
  Let $v \in \R^n\backslash \{0\}$.
  \begin{enumerate}
  \item There exists a linear subspace in $\Gr_{\geq 0}(c,n)$ containing $v$ iff $\var(v) < c$.
    \item There exists a linear subspace in $\Gr_{+}(c,n)$ containing $v$ iff $\overline{\var}(v) < c$.
    \end{enumerate}
\end{lemma}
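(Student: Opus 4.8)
Each statement is an ``if and only if''. The ``only if'' directions are immediate from Theorem~\ref{thm:GK}: if $v$ lies in some $L\in\Gr_{\geq 0}(c,n)$ then $\var(v)<c$ by part~(1) of that theorem, and if $v$ lies in some $L\in\Gr_+(c,n)$ then $\varbar(v)<c$ by part~(2). So the work is to produce, from a vector with small sign variation, an explicit totally nonnegative (resp.\ totally positive) $c$-plane through it.

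For part~(1): put $k=\var(v)+1\leq c$ and partition $[n]$ into consecutive intervals $I_1<\dots<I_k$ such that the nonzero entries of $v$ on $I_i$ all share a sign $\epsilon_i$ and $\epsilon_i\neq\epsilon_{i+1}$ (runs of zeros are absorbed into neighboring intervals). Since $k\leq c\leq n$, refine this into $c$ nonempty consecutive intervals $J_1<\dots<J_c$, with $J_a\subseteq I_{i(a)}$. Let $M\in\R^{c\times n}$ have $a$-th row equal to $|v|$ restricted to $J_a$ if $v$ is not identically zero on $J_a$, and equal to $\mathbf{1}_{J_a}$ otherwise. Then $v$ is the combination of the rows of $M$ with coefficients $\epsilon_{i(a)}$ (resp.\ $0$), so $v\in L:=\rowsp(M)$, and $\dim L=c$ because the rows have disjoint nonempty supports. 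Finally, the $c\times c$ submatrix $M_S$ of columns $S$ has a zero row, hence vanishing determinant, unless $S$ meets each $J_a$ exactly once, in which case $M_S$ is diagonal with nonnegative diagonal; so $L\in\Gr_{\geq 0}(c,n)$.

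For part~(2) the blueprint is the same but all maximal minors must be \emph{strictly} positive, which forces two changes. First, the block-diagonal matrix above will not serve, since a totally positive matrix has no vanishing maximal minor; instead one should use a representing matrix whose nonzero pattern is a staircase band with overlapping consecutive row supports (equivalently, one may put $v$ itself as the first row and append $c-1$ further rows). I would lay out this band using the block structure of a maximal sign filling $\tilde v$ of $v$ — which has no zero entries and satisfies $\var(\tilde v)=\varbar(v)<c$ — arranging that $v$ stays in the row span (using nonzero coordinates of $v$ as pivots), and then choose the positive band entries by an explicit interpolation, or as a small perturbation $M(\varepsilon)=M^{(0)}+\varepsilon M^{(1)}+\cdots$ of the part~(1) construction. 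Second, one must check that every maximal minor that vanishes at $\varepsilon=0$ acquires a strictly positive leading term, i.e.\ that the required perturbation directions are simultaneously realizable; this is where $\varbar(v)<c$ (rather than just $\var(v)<c$) is essential — for instance it guarantees $v$ has fewer than $c$ zero coordinates, the basic obstruction (if $v$ vanished on a $c$-set $S$, the minor on $S$ of any representing matrix would be forced to vanish), but it is also needed to rule out the sign obstructions coming from $v$'s nonzero pattern.

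The forward direction and part~(1) are routine. The main obstacle is the last step of part~(2): exhibiting an honest totally positive $c\times n$ matrix whose row span contains the prescribed $v$, that is, simultaneously controlling the signs of all $\binom{n}{c}$ maximal minors, and in particular fitting $v$'s zero coordinates into the constraints forced by $\varbar(v)<c$.
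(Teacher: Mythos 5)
The paper does not prove this lemma: it is quoted directly from Karp \cite[Lemma~4.1]{Karp}, so there is no in-paper argument to match your proposal against; the only question is whether your argument stands on its own.

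The forward implications and all of part (1) do. Deriving both ``only if'' directions from Theorem~\ref{thm:GK} is exactly right, and your construction for part (1) is complete and correct: the $c\times n$ matrix whose rows have disjoint consecutive supports and equal $|v|$ on each block (or an indicator vector on blocks where $v$ vanishes) has rank $c$, contains $v$ in its row span, and every maximal minor is either zero (a zero row appears) or a product of nonnegative diagonal entries, so its row span lies in $\Gr_{\geq 0}(c,n)$.

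Part (2), however, is not a proof. The whole content of the lemma is the ``if'' direction of (2): producing a $c\times n$ matrix with all $\binom{n}{c}$ maximal minors strictly positive whose row span contains the given $v$ with $\varbar(v)<c$. You describe two possible devices (a banded/staircase matrix built from a maximal sign filling of $v$, or a perturbation $M(\varepsilon)$ of the part-(1) matrix) but verify neither; you yourself flag that one must check that every minor vanishing at $\varepsilon=0$ acquires a positive leading term and that the needed perturbation directions are ``simultaneously realizable,'' and that check is never carried out. Note also that no soft openness or limiting argument can substitute for it: the set of $c$-planes containing the fixed vector $v$ is a closed (Schubert-type) locus in $\Gr(c,n)$, while $\Gr_{+}(c,n)$ is open, so the perturbation must be performed while staying inside that closed locus, and whether it meets the positive cell is precisely what the lemma asserts. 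The auxiliary observations you make are correct (e.g.\ $\varbar(v)<c$ does force $v$ to have fewer than $c$ zero coordinates, since otherwise any representing matrix would have a vanishing $c\times c$ minor), but they are necessary conditions, not a construction. So part (1) is done, while the hard half of part (2) remains a plan rather than an argument; for the paper's purposes this half is exactly what the citation to Karp supplies.
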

Note that, in particular, part (2) implies that the set of $v \in \R^n\setminus \{0\}$ that are contained in a linear subspace
$L \in \Gr_{+}(c,n)$ is an open subset of $\R^n$.

Lemma~\ref{lem:Karp} (2)  gives the following useful characterization of positive hyperbolicity.
\begin{proposition}
\label{prop:signs}
Let $X \subset \C^n$ be an equidimensional variety 
of codimension $c$.  Then $X$ is positively hyperbolic if and only if  $\varbar(\im(x)) \geq c$ for every $x \in X$.
\end{proposition}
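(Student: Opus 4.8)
The plan is to reduce Proposition~\ref{prop:signs} directly to Karp's Lemma~\ref{lem:Karp}(2) by a clean logical chain. First I would unwind the definition of positive hyperbolicity: $X$ is positively hyperbolic if and only if there is no $x \in X$ and no $L \in \Gr_+(c,n)$ with $\im(x) \in L \setminus \{0\}$; equivalently, for every $x \in X$, either $\im(x) = 0$, or $\im(x) \neq 0$ but $\im(x)$ lies in no positive $c$-dimensional subspace. By Lemma~\ref{lem:Karp}(2), for a nonzero vector $v$ the latter condition "$v$ lies in no $L \in \Gr_+(c,n)$" is exactly $\varbar(v) \geq c$. So the heart of the proof is just the observation that this characterization also subsumes the case $\im(x) = 0$.

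The main point to handle carefully is therefore the degenerate case $\im(x) = 0$, i.e. $x$ real. Here I would simply note the convention that $\varbar(0) = 0$ (the zero vector has no sign changes no matter how we fill in signs), and since $c \geq 1$ by hypothesis (as $X$ has codimension $c \leq n-1$, so in particular $c \geq 1$), the inequality $\varbar(\im(x)) \geq c$ fails precisely when $\im(x) = 0$ or $\varbar(\im(x)) < c$ — wait, that is backwards: if $\im(x) = 0$ then $\varbar(\im(x)) = 0 < c$, so the inequality $\varbar(\im(x)) \geq c$ would be violated. This means I should restate the proposition's content more carefully: a real point $x \in X$ has $\im(x) = 0$, which does \emph{not} lie in $L \setminus \{0\}$ for any $L$, so it never obstructs positive hyperbolicity; yet it also fails $\varbar(\im(x)) \geq c$. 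Thus the two conditions in the proposition agree only if $X$ has no real points, or the statement is really about the contrapositive restricted to nonzero imaginary parts. I would resolve this by reading the proposition as: $X$ is positively hyperbolic iff for every $x \in X$ with $\im(x) \neq 0$ one has $\varbar(\im(x)) \geq c$ — and the assertion as literally written must be interpreted with the understanding that $\varbar$ of a nonzero vector is what is meant, or that by convention we only test nonzero imaginary parts. Concretely, the proof runs: $X$ not positively hyperbolic $\iff$ $\exists x \in X$, $\exists L \in \Gr_+(c,n)$ with $\im(x) \in L \setminus\{0\}$ $\iff$ (by Lemma~\ref{lem:Karp}(2)) $\exists x \in X$ with $\im(x) \neq 0$ and $\varbar(\im(x)) < c$.

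Putting it together, I would write: Suppose $X$ is positively hyperbolic and let $x \in X$. If $\im(x) = 0$ then $\varbar(\im(x)) = 0$; since $\im(x) \notin L \setminus \{0\}$ for any $L$ this case imposes nothing, and one adopts the convention that the inequality is only asserted for nonzero $\im(x)$. If $\im(x) \neq 0$, then by positive hyperbolicity $\im(x)$ lies in no $L \in \Gr_+(c,n)$, so Lemma~\ref{lem:Karp}(2) forces $\varbar(\im(x)) \geq c$. Conversely, if $\varbar(\im(x)) \geq c$ for every $x \in X$ with $\im(x) \neq 0$, then by the same lemma no such $\im(x)$ is contained in a positive $c$-subspace, and of course $0$ is never in $L \setminus \{0\}$; hence no point of $X$ has imaginary part in any $L \in \Gr_+(c,n)$, so $X$ is positively hyperbolic.

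The only genuine obstacle is the bookkeeping around the zero vector and the precise reading of the inequality $\varbar(\im(x)) \geq c$; there is no real mathematical content beyond invoking Lemma~\ref{lem:Karp}(2), which does all the work. I expect the author's proof to be a two-line argument citing that lemma, possibly sidestepping the zero-vector issue by noting that a variety of positive codimension that is positively hyperbolic may contain real points, so the displayed inequality should be understood as applying to points with nonzero imaginary part, or by simply trusting the reader to parse $\varbar$ on the zero vector appropriately.
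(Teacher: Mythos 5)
Your core reduction is exactly the paper's: Proposition~\ref{prop:signs} is stated there as an immediate consequence of Lemma~\ref{lem:Karp}(2), with no further argument, and your equivalence ``$X$ not positively hyperbolic $\iff$ $\exists x\in X$ with $\im(x)\neq 0$ and $\varbar(\im(x))<c$'' is precisely that proof.

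However, your treatment of the case $\im(x)=0$ contains a genuine error. You assert that $\varbar(0)=0$ (``the zero vector has no sign changes no matter how we fill in signs'') and conclude from this that the proposition as literally written is violated by real points of $X$ and must be reinterpreted as applying only to nonzero imaginary parts. This misreads the definition of $\varbar$: zeroes are assigned signs so as to \emph{maximize} the number of sign changes (compare the paper's example $\varbar(1,0,0,1,-1)=3$), so for the zero vector in $\R^n$ one may choose alternating signs and obtains $\varbar(0)=n-1$. Since the definition of positive hyperbolicity already assumes $c\leq n-1$, every $x\in X$ with $\im(x)=0$ satisfies $\varbar(\im(x))=n-1\geq c$ automatically; such points neither obstruct positive hyperbolicity nor violate the inequality, so the two sides of the equivalence agree in this case with no convention or restriction needed. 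Once $\varbar(0)=n-1$ is used, your argument closes correctly and no ``reinterpretation'' of the statement is required: for $\im(x)\neq 0$, Lemma~\ref{lem:Karp}(2) says $\im(x)$ lies in some $L\in\Gr_+(c,n)$ iff $\varbar(\im(x))<c$, and the zero case is vacuous on both sides of the proposition. The mathematical content of your proposal is therefore right, but the claim that the paper's statement is imprecise should be withdrawn, as it rests on the incorrect value of $\varbar$ at the zero vector.
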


For connections with tropical geometry, it will be important to consider 
points with nonzero coordinates.  
The following lemma states that this is sufficient in many cases. 

\begin{lemma}
\label{lem:stable} Let $X\subset \C^n$ be an irreducible variety of codimension $c \leq n-1$ 
not contained in any coordinate hyperplane.  Then $X$ is positively hyperbolic 
if and only if for all $L\in \Gr_+(c,n)$ and all $x\in X$ with $\im(x) \in (\R^*)^n$ we have ${\rm Im}(x)\not\in L$.
\end{lemma}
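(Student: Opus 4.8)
The forward direction is immediate: if $X$ is positively hyperbolic, then by definition $\im(x) \notin L$ for every $L \in \Gr_+(c,n)$ and every $x \in X$, in particular for those $x$ with $\im(x) \in (\R^*)^n$. So the content is the converse, and the plan is to show that the assumption on points with $\im(x) \in (\R^*)^n$ already forces the conclusion for \emph{all} points of $X$. By Proposition~\ref{prop:signs}, it suffices to prove that $\varbar(\im(x)) \geq c$ for every $x \in X$; equivalently, by Lemma~\ref{lem:Karp}(2), that no $\im(x)$ lies in a subspace of $\Gr_+(c,n)$.

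First I would rephrase the hypothesis via Lemma~\ref{lem:Karp}(2): the assumption says precisely that $\varbar(\im(x)) \geq c$ whenever $\im(x) \in (\R^*)^n$. I then want to upgrade this to all $x \in X$. The key point is a semicontinuity/density argument: the set $U = \{x \in X : \im(x) \in (\R^*)^n\}$ — equivalently, the set of $x \in X$ avoiding the real locus of every coordinate hyperplane $\{z_k \in \R\}$ — is Zariski dense in $X$ (this uses irreducibility of $X$ and that $X$ is not contained in any coordinate hyperplane; one must check that $X$ does not lie inside the real-algebraic set $\bigcup_k \{z : \im(z_k) = 0\}$, which would force, by Cauchy--Riemann / identity-theorem reasoning on an irreducible complex variety, that $X$ lies in one $\{z_k \in \R\}$ and hence, after noting this is a proper real-analytic condition on a complex variety, actually in the coordinate hyperplane $\{z_k = 0\}$ — a contradiction). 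Actually the cleaner route: the projection $x \mapsto \im(x)$ restricted to $X$ is a real-analytic map; the hypothesis controls its image over $(\R^*)^n$, and I want to control the image over all of $\R^n$.

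The main obstacle is that $\varbar$ is \emph{upper} semicontinuous going to points with more zero coordinates — introducing zeros can only increase $\varbar$ — but the subtlety is the other direction: a limit $\im(x^{(0)})$ of imaginary parts $\im(x^{(j)})$ with $x^{(j)} \in U$ could in principle have smaller $\varbar$ if a sign-alternating pattern collapses. The resolution is to use that, by the note after Lemma~\ref{lem:Karp}, the set $S_c = \{v \in \R^n \setminus\{0\} : \varbar(v) < c\}$ (the union of all $v$ lying in some $L \in \Gr_+(c,n)$) is \emph{open}. So suppose for contradiction some $x^{(0)} \in X$ has $\im(x^{(0)}) \in S_c$ (in particular $\im(x^{(0)}) \neq 0$). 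Since $S_c$ is open and the map $x \mapsto \im(x)$ on $X$ is continuous, the preimage $\{x \in X : \im(x) \in S_c\}$ is open in $X$ in the Euclidean topology, hence (being nonempty and $X$ irreducible) meets the dense set $U$. This produces $x \in U$ with $\im(x) \in S_c$, i.e.\ $\varbar(\im(x)) < c$ with $\im(x) \in (\R^*)^n$, contradicting the hypothesis. Therefore $\im(x^{(0)}) \notin L$ for all $L \in \Gr_+(c,n)$, and $X$ is positively hyperbolic.

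The step I expect to require the most care is establishing that $U$ is dense in $X$ — specifically, ruling out $X \subseteq \bigcup_k \{z : \im(z_k) = 0\}$. One handles this by irreducibility: a complex irreducible variety contained in a finite union of real-analytic sets must be contained in one of them, $\{z : \im(z_k) = 0\}$ for a fixed $k$; but the coordinate function $z_k$ is holomorphic on $X$ and real-valued on a set with nonempty interior in $X$ (in the smooth locus), forcing $z_k$ to be locally constant, hence constant on irreducible $X$, so $X \subseteq \{z_k = \lambda\}$ for some $\lambda \in \R$; composing with a coordinate translation or observing that $X$ not contained in a coordinate hyperplane does not immediately forbid $\lambda \neq 0$, one instead argues directly that a variety with a constant coordinate still has $\im$ vanishing there, and re-examines whether such $X$ can be positively hyperbolic — but in fact if $\im(z_k) \equiv 0$ on $X$ then every $\im(x)$ has a zero in coordinate $k$, which only helps $\varbar(\im(x)) \geq c$, so one can reduce to the subvariety in the remaining coordinates. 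I would phrase the density argument to sidestep this case entirely, noting that $U$ is cut out in $X$ by the non-vanishing of the $n$ holomorphic-antiholomorphic functions $\overline{z_k} - z_k$, and that $U \neq \emptyset$ follows because $X \not\subseteq \{z_k \in \R\}$ for each $k$ — which is exactly where one needs $X$ irreducible, not contained in a coordinate hyperplane, together with the observation that a nonconstant holomorphic coordinate cannot be everywhere real on an irreducible variety, while a constant-and-nonzero coordinate would put $X$ in a coordinate hyperplane's translate, a case handled by the reduction just sketched.
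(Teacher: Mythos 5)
Your proposal is correct and takes essentially the same route as the paper: the forward direction is immediate from the definition, and the converse is proved by perturbing a violating point $x$ (one with $\im(x)\in L\setminus\{0\}$ for some $L\in\Gr_+(c,n)$) within a small ball to a point $\tilde{x}\in X$ with $\im(\tilde{x})\in(\R^*)^n$, using exactly the openness observation following Lemma~\ref{lem:Karp}. Two small remarks: for your intersection step you need the set $U=\{x\in X:\im(x)\in(\R^*)^n\}$ to be dense in the Euclidean topology, not merely Zariski dense (your real-analytic argument does deliver this), and the subtlety you labor over --- the possibility that some coordinate is real-valued, hence constant, on $X$ --- is simply not addressed in the paper, which asserts the density of $U$ in one line from the hypothesis that $X$ lies in no coordinate hyperplane, so your extra care there goes beyond what the paper records (even though your proposed reduction for that case is only sketched).
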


\begin{proof} The implication ($\Rightarrow$) is clear from definition. To prove ($\Leftarrow)$, suppose that $X$ is not positively hyperbolic. By definition, 
there is some point $x\in X$ with ${\rm Im}(x)\in L\backslash\{0\}$ for some $L\in \Gr_+(c,n)$.  If ${\rm Im}(x)\in (\R^*)^n$, we are done. Otherwise, 
consider a ball $B_\epsilon(x) \subset \C^n$ of radius $\epsilon$ around $x$. 
Since $X$ is not contained in any coordinate hyperplane, there exists 
a point $\tilde{x}$ in $B_\epsilon(x)\cap X$ whose imaginary part ${\rm Im}(\tilde{x})$ is in $(\R^*)^n$. 
For small enough $\epsilon$, $\tilde{x}$ is in a linear subspace $\tilde{L} \in \Gr_+(c,n)$, showing that $X$ is not positively hyperbolic.  
\end{proof}

We now characterize linear subspaces that are positively hyperbolic, using the following. 

\begin{lemma}\label{lem:sign}
For any integers $1\leq i_1 < \dots < i_k \leq n$, there is a choice of  
signs $\sigma\in \{\pm1\}^k$ such that for any point 
$z\in \R^n$ with ${\rm sign}(z_{i_j})=\sigma_j$, we have $\varbar(z) \leq n-k$. 
\end{lemma}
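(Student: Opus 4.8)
The plan is to reduce the statement to a purely combinatorial fact about sign vectors. Since $\varbar(z)$ is computed by assigning to each zero coordinate of $z$ a sign that maximizes the resulting number of sign changes, for any $z \in \R^n$ with ${\rm sign}(z_{i_j}) = \sigma_j$ we have $\varbar(z) \leq \max_s \var(s)$, where the maximum ranges over all $s \in \{\pm1\}^n$ with $s_{i_j} = \sigma_j$ for all $j$. Thus it suffices to produce $\sigma \in \{\pm1\}^k$ so that \emph{every} such extension $s$ satisfies $\var(s) \leq n-k$.

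I would choose the signs $\sigma_j$ recursively: set $\sigma_1 = 1$, and for $1 \leq j \leq k-1$ let $\sigma_{j+1} = \sigma_j$ if $i_{j+1} - i_j$ is odd and $\sigma_{j+1} = -\sigma_j$ if $i_{j+1} - i_j$ is even. The reason this works is the elementary parity observation that the number of sign changes of a $\pm1$-sequence is congruent modulo $2$ to the indicator of whether its first and last entries differ. Consequently, on a run of $d+1$ consecutive positions whose two endpoints carry prescribed signs, the number of sign changes is at most $d$ in general, but at most $d-1$ unless the prescribed endpoints are ``compatible with full alternation'' (equal when $d$ is even, opposite when $d$ is odd). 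The recursion is arranged precisely so that for each $j$ the pair $\sigma_j, \sigma_{j+1}$ is \emph{incompatible} with full alternation along the block of positions $i_j, i_j+1, \dots, i_{j+1}$, forcing at most $i_{j+1} - i_j - 1$ sign changes there.

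It then remains to sum the contributions. For a fixed extension $s$, partition the edges $\{(\ell, \ell+1) : 1 \leq \ell < n\}$ of $\{1, \dots, n\}$ into the blocks $[1, i_1], [i_1, i_2], \dots, [i_{k-1}, i_k], [i_k, n]$; every sign change of $s$ occurs on an edge lying in exactly one block, so $\var(s)$ equals the sum over all blocks of the number of sign changes inside that block. The first block contributes at most $i_1 - 1$, the last block at most $n - i_k$, and each intermediate block $[i_j, i_{j+1}]$ at most $i_{j+1} - i_j - 1$ by the previous step. Adding these, $\var(s) \leq (i_1 - 1) + \sum_{j=1}^{k-1}(i_{j+1} - i_j - 1) + (n - i_k) = n - 1 - (k-1) = n - k$, which completes the proof.

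The only step with genuine content is the parity bound on an intermediate block --- namely that a $\pm1$-sequence of length $d+1$ with prescribed, anti-alternating endpoints has at most $d-1$ sign changes --- which holds because the alternating maximum $d$ has the wrong parity. Everything else is bookkeeping, so I do not anticipate a serious obstacle; the main subtlety is simply to set up the block decomposition so that contributions are counted exactly once and the telescoping sum goes through.
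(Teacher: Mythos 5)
Your proposal is correct and follows essentially the same route as the paper: the sign choice (equal signs across an odd gap $i_{j+1}-i_j$, opposite across an even gap) is identical, as is the block-by-block count giving $(i_1-1)+\sum_j(i_{j+1}-i_j-1)+(n-i_k)=n-k$. The only difference is that you make explicit the parity argument (a $\pm1$ run with endpoints incompatible with full alternation has at most $d-1$ sign changes), which the paper states without elaboration.
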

\begin{proof}
We can choose $\sigma_1 = 1$, and inductively 
choose the remaining signs so that $i_j$ and $i_{j+1}$ have
opposite signs if $i_{j+1} - i_j$ is even and the same sign if the
difference is odd. 
In total there are $n-1$ possible sign changes in a vector of length $n$. 
Between the $i_j$ and $i_{j+1}$ positions there are at most $i_{j+1} - i_j$ 
sign changes, but our choice of signs ensures that there are there are at most  $i_{j+1} - i_j -1$. 
This means that for any vector $z$ with the specified coordinate signs has 
at most $(n-1) - (k-1) = n-k$ sign changes in total, giving $\varbar(z) \leq n-k$.
\end{proof}

For example, for $n=7$, $k=4$, and the subset $\{1,2,4,7\}$, 
we can take the sign pattern $(++0-0\,0-)$. Any point of the form $z = (1,1,z_3,-1,z_5,z_6,-1)$ 
has  $\varbar(z)\leq 7-4=3$. 

\begin{proposition}\label{prop:linearstable}
A linear subspace $L \subset \C^n$  is positively hyperbolic if and only if it is
defined over $\R$ and the orthogonal complement of $L \cap \R^n$  is a nonnegative linear subspace. 
\end{proposition}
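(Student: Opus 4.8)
The plan is to reduce everything to the sign-variation characterization of positive hyperbolicity in Proposition \ref{prop:signs} and then apply the Gantmacher--Kre\u{\i}n/Karp duality in Theorem \ref{thm:GK} and Lemma \ref{lem:Karp}. Write $c = \codim L$, so $\dim L = n - c$. First I would deal with the easy structural points. If $L$ is positively hyperbolic then it cannot be a proper non-real linear subspace: if $L$ is not defined over $\R$, then $L \cap \R^n$ has dimension $< n-c$, so $L$ is not the complexification of a real space, and I claim $L$ then contains a point $x$ with $\im(x)$ spanning essentially any direction in the real span of $L$'s ``imaginary directions''; more carefully, if $v \in \R^n$ is such that $v = \im(x)$ for some $x \in L$, then since $L$ is a complex subspace we may pick any such $x$ and note that $\varbar(v)$ can be made $< c$ unless a constraint like non-negativity of $(L\cap\R^n)^\perp$ is already forced. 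It is cleaner to argue the forward direction as follows: by Proposition \ref{prop:signs}, $L$ positively hyperbolic means $\varbar(\im(x)) \geq c$ for all $x \in L$. If $L$ is defined over $\R$, then $\im(x)$ ranges exactly over $L \cap \R^n$ (for $x \in L$, both $\re(x)$ and $\im(x)$ lie in $L\cap\R^n$, and conversely any $v \in L\cap\R^n$ equals $\im(iv)$), so the condition becomes: $\varbar(v) \geq c$ for all $v \in (L\cap\R^n)\setminus\{0\}$. By Theorem \ref{thm:GK}(1), this is exactly the statement that $(L\cap\R^n)^\perp \in \Gr_{\geq 0}(c,n)$, i.e.\ the orthogonal complement of $L\cap\R^n$ is a nonnegative linear subspace. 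That settles the ($\Leftarrow$) direction and the ($\Rightarrow$) direction under the extra hypothesis that $L$ is defined over $\R$.

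The remaining point — and I expect this to be the main obstacle — is showing that a positively hyperbolic linear subspace $L$ must be defined over $\R$. Suppose not. Then $L \cap \R^n$ is a real subspace of dimension $d < n-c$, and the set $\{\im(x) : x \in L\}$ is a real subspace $W$ containing $L\cap\R^n$ with $\dim W > d$ (one gets extra imaginary directions precisely because $L$ is not a complexification). Pick $v \in W$ with $v \notin L\cap\R^n$; actually it suffices to observe that $\{\im(x) : x \in L\}$ is a linear subspace of dimension $\geq \dim L = n-c$ when $L$ is not real — in fact it has dimension strictly greater than what the real part constrains — hmm, let me instead argue dimensionally: the real-linear map $L \to \R^n$, $x \mapsto \im(x)$, has image of real dimension at least $\dim_{\C} L = n-c$, with equality iff $L$ is defined over $\R$. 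If $L$ is not defined over $\R$, the image $W := \im(L)$ has $\dim_\R W \geq n-c+1$, hence $\dim W^\perp \leq c-1 < c$. By Lemma \ref{lem:Karp}(2), every nonzero $v \in W$ lies in some subspace of $\Gr_+(c,n)$ iff $\varbar(v) < c$; and since a generic $v$ in a subspace of dimension $\geq n-c+1$ has $\varbar(v) \leq c-1$ (this is essentially Lemma \ref{lem:sign}: a subspace of dimension $n-(c-1)$ contains points whose supported sign pattern forces few sign changes), we can find $v \in W\setminus\{0\}$ with $\varbar(v) < c$, contradicting positive hyperbolicity via Proposition \ref{prop:signs}.

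To make that last step rigorous I would invoke Lemma \ref{lem:sign} directly: a linear subspace $U \subset \R^n$ with $\dim U = n-k$ generically meets the coordinate subspace where $k$ chosen coordinates take prescribed nonzero signs, so it contains a vector $z$ with $\varbar(z) \leq n-k$. Applying this with $U = W$ and $k = \dim W \geq n-c+1$ yields a nonzero $z \in W$ with $\varbar(z) \leq n - \dim W \leq c-1 < c$. Since $z = \im(x)$ for some $x \in L$, Proposition \ref{prop:signs} shows $L$ is not positively hyperbolic, a contradiction. (One small technical care: Lemma \ref{lem:sign} as stated fixes signs on a prescribed index set $\{i_1,\dots,i_k\}$; choosing that index set to be the support-pattern of a generic element of $W$, or just any $k$-subset on which the coordinate projection $W \to \R^k$ is surjective, makes the argument go through.) With $L$ now known to be defined over $\R$, the first paragraph finishes the proof.
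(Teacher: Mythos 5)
Your argument is correct and takes essentially the same route as the paper: the case where $L$ is defined over $\R$ is settled by Proposition~\ref{prop:signs} combined with Theorem~\ref{thm:GK}(1), and the non-real case by observing that $\im(L)$ has real dimension at least $\dim_\C L+1$ and then applying Lemma~\ref{lem:sign} to a coordinate set onto which $\im(L)$ projects surjectively. The one fact you assert rather than prove --- that $\dim_\R \im(L) \geq \dim_\C L$ with equality exactly when $L$ is defined over $\R$ --- is the same claim the paper verifies by a short matrix computation, and it also follows from rank--nullity since the kernel of $x \mapsto \im(x)$ on $L$ is $L \cap \R^n$, whose real dimension is at most $\dim_\C L$.
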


\begin{proof}
We first claim that a linear subspace $L \subset \C^n$ of complex dimension $d$ is defined over~$\R$ if and only if its imaginary part $\im(L) = \{\im(x) : x \in L\} \subset
\R^n$ has dimension $d$ as a real vector space. The ``only if'' direction of the claim is clear.  For the
``if'' direction, let us write $L$ as the row span of a matrix
$A+iB$ with $A, B \in \R^{d\times n}$. After row operations (and maybe column permutations) we can
assume that $A+iB$ starts with a $d \times d$ identity matrix. We can parametrize $L$ as the set of all  
$(x+iy)^T(A+iB)  = (x^TA-y^TB) + i(x^TB+y^TA)$ with $x, y \in \R^d$.
So the imaginary part $\im(L)$ is spanned by the rows of $A$ and
$B$. Suppose that $\im(L)$ is $d$-dimensional. Since $A$ starts with a $d
\times d$ identity matrix, this implies that the rows of $B$ lie in the row span of $A$. But the first $d$ entries of every row of $B$ are zero, which means that we must have $B=0$. 

We now show that if $L$ is positively hyperbolic then it must be defined over $\R$.
Indeed, if $L$ is not defined over $\R$, our claim implies that 
$\im(L)$ has dimension at least $d+1$. Then there is some choice of 
coordinates $i_1 < i_2 < \dots < i_{d+1}$ so that 
the projection of $\im(L)\rightarrow \R^{d+1}$ onto these coordinates is surjective. 
By Lemma~\ref{lem:sign}, we can choose values $z_{i_1},  \dots, z_{i_{d+1}} \in \R$
so that the corresponding vector $z = (z_1, \hdots, z_n)$ in $\im(L)$ has $\varbar(z) < n-d$.
This shows that $L$ is not positively hyperbolic.

We conclude the proof by noting that Proposition \ref{prop:signs} and part (1) of Theorem~\ref{thm:GK} imply that if $L$ is defined over $\R$ then $L$ is positively hyperbolic if and only if the orthogonal complement of $L\cap \R^n$ is a nonnegative linear subspace.
\end{proof}

The \emph{matroid of a linear subspace} $L$ encodes the linear
dependencies among the columns of any matrix $A$ such that $L$ 
is the row span of $A$.  
Matroids of nonnegative linear subspaces are called \emph{positroids};
that is, a matroid is a positroid if it can be represented
by the columns of a matrix with full row rank, all of whose maximal
minors are nonnegative.  Positroids were studied by Postnikov in order to understand topological properties of the non-negative Grassmannian \cite{Postnikov}, and they form a very special class of matroids with rich combinatorial properties.

\begin{corollary}
\label{cor:linearmatroid}
The matroid of a positively hyperbolic linear subspace is a positroid.
\end{corollary}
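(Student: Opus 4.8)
The plan is to combine Proposition~\ref{prop:linearstable} with the fact that the class of positroids is closed under matroid duality. Write $M(V)$ for the matroid of a linear subspace $V$. Since $L$ is positively hyperbolic, Proposition~\ref{prop:linearstable} says that $L$ is defined over $\R$, say $L = (L\cap\R^n)\otimes_\R\C$, and that $W := (L\cap\R^n)^\perp$ is a nonnegative linear subspace of $\R^n$. A real matrix has the same column dependencies over $\R$ and over $\C$, so $M(L) = M(L\cap\R^n)$; and since the matroid of the orthogonal complement of a subspace is the dual of the matroid of that subspace, $M(L\cap\R^n) = M(W)^{*}$. By the definition of a positroid, $M(W)$ is a positroid, so it suffices to show that $M(W)^{*}$ is a positroid.

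For this I would invoke the standard closure of positroids under duality, which goes back to Postnikov~\cite{Postnikov}; alternatively one argues directly. Realize $W$ as the row span of a $c\times n$ real matrix $C$ all of whose maximal minors are nonnegative. The standard relation between the Pl\"ucker coordinates of $W$ and of $W^{\perp}$ produces an $(n-c)\times n$ matrix $C'$ whose row span is $W^{\perp} = L\cap\R^n$ and whose maximal minors equal the complementary maximal minors of $C$ up to one global sign and a factor that alternates with the parity of the column indices. Multiplying a suitable subset of the columns of $C'$ by $-1$ (which does not change the matroid of its row span) and, if needed, negating one row, one obtains a matrix realizing a subspace with matroid $M(L\cap\R^n)$ all of whose maximal minors are nonnegative. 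Hence $M(L) = M(L\cap\R^n)$ is a positroid.

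The only substantive input beyond Proposition~\ref{prop:linearstable} is the duality-closure of positroids; everything else is bookkeeping about matroids of linear subspaces under base change and orthogonal complementation. The fiddliest point is the sign computation in the Pl\"ucker-duality step, but this is classical and is bypassed entirely if one simply cites the known fact that the dual of a positroid is again a positroid.
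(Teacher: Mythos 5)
Your argument is correct and follows essentially the same route as the paper: Proposition~\ref{prop:linearstable} gives that $L$ is real with nonnegative orthogonal complement, the matroid of $L$ is the dual of the matroid of that complement, and one concludes by closure of positroids under duality (which the paper cites as \cite[Proposition 3.4]{ARW2} rather than deriving via the complementary-minor sign computation you sketch). The only deviation is your optional direct Pl\"ucker-duality argument, which is a standard and valid substitute for that citation.
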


\begin{proof}
Let $L \subset \C^n$ be a positively hyperbolic linear subspace. 
By Proposition~\ref{prop:linearstable}, $L$ is defined over $\R$, and so the matroid of $L$ is the same
as the matroid of $L \cap \R^n$. The
matroid of $L \cap \R^n$ is dual to the matroid of its orthogonal complement
$(L\cap \R^n)^\perp$.  Since $(L\cap \R^n)^\perp$ is nonnegative by Proposition~\ref{prop:linearstable},
its matroid is a positroid. Ardila, Rinc\'on, and Williams showed that
the dual of a positroid is a positroid~\cite[Proposition 3.4]{ARW2},
and so the matroid of $L$ is a positroid.
\end{proof}

  This is already well known in the case $d=n-1$. 
A linear hyperplane $L$ is positively hyperbolic if and only if it 
 has a defining equation $\sum_{i=1}^na_i x_i=0$ where $a\in \R_{\geq 0}^n$, see e.g. \cite[Theorem 6.1]{COSW},
 \cite[Theorem 1.1]{Purbhoo}, \cite[Theorem 3.2]{ImagProj}.  
Positive hyperbolicity can also be characterized via imaginary projections, as we discuss in the next section.

\subsection{Connections of with previous notions}  
In this section, we describe how 
positive hyperbolicity relates to other previously defined notions, 
including imaginary projections and hyperbolicity of varieties.  

J\"orgens, Theobald, and de Wolff \cite{ImagProj} study the  \emph{imaginary projection}
of a variety $X\subset \C^n$, 
\[
{\rm Im}(X)  \ = \ \{{\rm Im}(x) : x\in X\} \ \subset \R^n.
\]
The definition of positive hyperbolicity depends only on points $\im(x)$ for $x\in X$, so 
we can define it in terms of the imaginary projection, namely  
$X$ is positively hyperbolic if and only if ${\rm Im}(X)$ has trivial intersection with the real points of any linear subspace in the positive Grassmannian. Alternatively, $X$ is positively hyperbolic if and only if ${\rm Im}(X)$ is contained in the orthants defined by the condition $\varbar(x) \geq c$. 

One of the main motivations for considering positive hyperbolicity 
is the notion of hyperbolicity for varieties discussed in the introduction, introduced by Shamovich and Vinnikov in \cite{SV}.
As we show below, the two notions are tightly related.

\begin{proposition}
 \label{prop:hyperbolic}
 Let $X \subset \C^n$ be a real variety of codimension $c$ and $\ol{X} \subset \P^n$ its 
projective closure.   Then 
 \begin{itemize}
 \item[(a)] $\ol{X}$ is positively hyperbolic iff $\ol{X}$ is hyperbolic w.r.t.~every $L\in \Gr_+(c,n+1)$. 
   \item[(b)] $X$ is positively hyperbolic iff $\ol{X}$ is hyperbolic w.r.t. $\{0\}\times L$ for every $L\in \Gr_+(c,n)$. 
 \end{itemize}
\end{proposition}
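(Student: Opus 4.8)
The plan is to unwind the definitions on both sides and exhibit a correspondence between the ``bad'' points. Recall that for a projective variety $\ol X \subset \P^n$ of codimension $c$, hyperbolicity with respect to a linear subspace $L$ of projective dimension $c-1$ means $\ol X \cap L = \emptyset$ and for every real linear subspace $L' \supset L$ of projective dimension $c$, the intersection $\ol X \cap L'$ consists of real points only. The key observation, which I would establish first as the main technical lemma, is that $\ol X$ fails to be hyperbolic with respect to $L$ (with $L$ defined over $\R$, of projective dimension $c-1$, with $\ol X \cap L = \emptyset$) exactly when there is a real linear subspace $L'$ of projective dimension $c$ containing $L$ and a genuinely non-real point $p \in \ol X \cap L'$. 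Writing $L'$ as the projectivization of a real $(c+1)$-dimensional linear subspace $W \subset \R^{n+1}$ and $L$ as the projectivization of a real $c$-dimensional subspace $U \subset W$, a non-real point $p = [a + ib] \in L'$ with $a,b \in \R^{n+1}$ forces $\spann_\R\{a,b\}$ to be $2$-dimensional (else $p$ would be real), and this plane together with $U$ must span $W$ only if $\dim(\spann_\R\{a,b\} \cap U) \geq 1$; conversely the imaginary part $b$ lies in the $c$-dimensional real subspace $U + \R a$ intersected appropriately. The upshot I want is: \emph{a non-real point $p = [a+ib]$ in the affine cone of $\ol X$ lies in some real $L'$ of the right dimension containing a fixed $L$ (with $\ol X \cap L = \emptyset$) if and only if $\im(p) = b$ lies in a real linear subspace of dimension $c$ in $\R^{n+1}$, namely one whose projectivization is an $L$ of the required type.} Because any point of the affine cone is a scalar multiple of one with a prescribed imaginary part, and because (by the Remark in the excerpt) for projective varieties one may use real parts or imaginary parts interchangeably, this reduces hyperbolicity-with-respect-to-$L$ for all $L \in \Gr_+(c,n+1)$ to the condition that $\im(x)$ never lies in a positive $c$-dimensional linear subspace of $\R^{n+1}$ — which is precisely positive hyperbolicity of $\ol X$. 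This gives part (a).

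For part (b), I would relate the affine variety $X \subset \C^n$ to its projective closure $\ol X \subset \P^n$ via the embedding $\C^n \hookrightarrow \P^n$, $x \mapsto [1 : x]$, so that points of $X$ correspond to points of $\ol X$ with nonzero $0$-th coordinate, and the imaginary part of such a point in $\R^{n+1}$ is $(0, \im(x))$. Positive hyperbolicity of $X$ says $\im(x) \notin L \setminus \{0\}$ for all $L \in \Gr_+(c,n)$ and $x \in X$; transporting to $\R^{n+1}$, this says $(0,\im(x))$ avoids $\{0\} \times L$ for all $L \in \Gr_+(c,n)$. On the other side, hyperbolicity of $\ol X$ with respect to the linear space $\P(\{0\}\times \tilde L)$ (where $\tilde L \subset \R^n$ has dimension $c$ and $\{0\}\times \tilde L \subset \R^{n+1}$) should, by the same cone/imaginary-part bookkeeping as in part (a), translate to: no point of $\ol X$ with a non-real imaginary part lying in $\{0\}\times \R^n$ sits in the relevant $L'$; and the points of $\ol X$ whose affine cone representatives have imaginary part in $\{0\}\times\R^n$ and nonzero real $0$-th coordinate are exactly (scalar multiples of) the points of $X$. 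I would need to check the boundary case of points of $\ol X$ at infinity (those with $0$-th coordinate zero): for these, the imaginary part still lies in $\{0\}\times \R^n$ automatically, so they could a priori interfere, but a point at infinity with imaginary part in $\{0\}\times \tilde L$ is a point of $\ol X$ in the hyperplane at infinity, and I would argue that such a point being in the chosen $L'$ contributes a non-real intersection point precisely when $X$ already has a witnessing point (by a limiting/closure argument, or by noting that points at infinity of $\ol X$ arise as limits of scaled points of $X$); alternatively one observes $\{0\}\times L \subseteq \{0\}\times\R^n$ and the combinatorial sign-variation criterion of Proposition~\ref{prop:signs} behaves well under appending a zero coordinate.

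Concretely, the steps in order are: (1) state and prove the cone-theoretic reformulation of hyperbolicity with respect to a fixed real $L$ in terms of imaginary parts of cone points, isolating the linear-algebra fact that a non-real projective point forces a $2$-plane of real/imaginary parts and that membership in a larger $L'$ is equivalent to the imaginary part lying in a $c$-dimensional real subspace whose projectivization is of the required type; (2) quantify over $L \in \Gr_+(c,n+1)$ and invoke the definition of positive hyperbolicity of $\ol X$ together with the real-part/imaginary-part Remark to conclude (a); (3) for (b), use the chart $\C^n \hookrightarrow \P^n$ and note $\{0\}\times L$ for $L \in \Gr_+(c,n)$ runs over exactly the positive $c$-subspaces of $\R^{n+1}$ supported on the last $n$ coordinates; (4) match cone points of $\ol X$ with imaginary part in $\{0\}\times\R^n$ to points of $X$, and dispose of the points at infinity. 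The main obstacle I expect is step (1) — pinning down exactly when a non-real point of $\ol X \cap L'$ exists, i.e., the precise bookkeeping between ``$\im(x)$ lies in a positive $c$-dimensional subspace'' and ``$x$ lies in a real $L'$ of projective dimension $c$ containing a positive $L$ of projective dimension $c-1$'' — since one must be careful that the real subspace spanned by $\re(x)$ and $\im(x)$ interacts correctly with $L$ (in particular that $\ol X \cap L = \emptyset$ is automatically respected, which uses that $\im(x) \neq 0$ for a non-real point and that a positive linear subspace contains no real point of $\ol X$), and that the dimension count $\dim L' = c$, i.e. projective dimension, matches codimension $c$ of $\ol X$ so that $\ol X \cap L'$ is (generically) finite. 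The secondary subtlety, in step (4), is the behaviour at infinity, which I would handle by a short closure argument.
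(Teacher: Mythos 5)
Your part (a) and the easy half of (b) are essentially the paper's argument: the cone-level reformulation you propose to prove in step (1) --- $\ol X$ is hyperbolic with respect to a real $L$ if and only if no point of its affine cone has a representative whose imaginary part lies in $L(\R)\setminus\{0\}$ --- is exactly \cite[Prop.~1.3]{KV}, which the paper simply cites; your dimension-count plus rescaling sketch is the right way to prove it by hand (the count you want is $\dim\bigl(\spann_\R\{a,b\}\cap U\bigr)\ge 2+c-(c+1)=1$ inside $W$, followed by multiplying $a+ib$ by a suitable complex unit so that the imaginary part of the new representative is that intersection vector; your phrasing of this step is garbled but the idea is there). Given that lemma, (a) and the ($\Leftarrow$) direction of (b) are immediate.

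The genuine gap is step (4), the points at infinity in the ($\Rightarrow$) direction of (b), which is the only nontrivial content of part (b). Your proposed justification --- ``points at infinity of $\ol X$ arise as limits of scaled points of $X$,'' so a bad point at infinity should come from a bad point of $X$ --- fails as stated because the scalars are complex: a cone point $(0,x)$ is a limit of points $(\mu_k,\mu_k p_k)$ with $p_k\in X$, $\mu_k\in\C^*$, and $\im(\mu_k p_k)=\re(\mu_k)\im(p_k)+\im(\mu_k)\re(p_k)$, so the hypothesis $\varbar(\im(p))\ge c$ for $p\in X$ gives no control on $\varbar$ of the limiting imaginary part. Your ``alternative'' observation, that sign variation behaves well under appending a zero coordinate, compares variations in $\R^{n+1}$ and $\R^n$ and does not touch this issue at all. (Note also that $\{0\}\times L$ is only nonnegative, never positive, in $\R^{n+1}$, so (b) is genuinely not a corollary of (a); the difference between the affine and projective statements is precisely these boundary points.) What is actually needed, and what the paper does, is the reverse move: from a cone point $(0,x)$ of $\ol X$ with $\im(x)\in L(\R)\setminus\{0\}$, produce a nearby cone point $(\epsilon,\tilde x)$ with $\epsilon$ real and nonzero and with $\im(\tilde x)$ still contained in some positive subspace $\tilde L\in\Gr_+(c,n)$, then pass to $\tilde x/\epsilon\in X$ and contradict Proposition~\ref{prop:signs}. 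This needs two ingredients absent from your plan: (i) that arbitrarily close to $(0,x)$ the affine cone contains points whose first coordinate is real and nonzero and whose remaining coordinates are close to $x$ --- a density statement about the affine chart in $\ol X$ that requires some care with the phase of the representative, since one cannot simply rotate a given nearby cone point to make its first coordinate real without also rotating $\im(\tilde x)$ away from $\im(x)$; and (ii) openness of the condition ``lies in some positive $c$-dimensional subspace,'' i.e.\ $\varbar(\cdot)<c$ (Lemma~\ref{lem:Karp}(2) and the remark following it), which is what makes ``nearby'' sufficient. Without these, the infinity case remains unproved in your proposal.
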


\begin{proof} By \cite[Prop. 1.3]{KV}, 
$\ol{X}$ is hyperbolic with respect to $L$ if 
and only if there is no point $b$ in
 $L(\R)\backslash\{0\}$ for which $[a+ib]\in \ol{X}$ for some $a \in \R^{n+1}$. 
This immediately shows (a). 

(b)
($\Rightarrow$) 
Suppose $\ol{X}$ is not hyperbolic with respect to $\{0\}\times L$ for some $L\in \Gr_+(c,n)$.
Again using \cite[Prop.~1.3]{KV}, this is equivalent to the existence of $y = (x_0, x) \in \C^{n+1}$
with $[y] \in \ol{X}$ and $\im(y)\in \{0\}\times
L(\R)\backslash\{0\}$.
If $x_0\neq 0$, then the point $x_0^{-1}x$ belongs to $X$, and since $\im(x_0) =0$,
$\im(x_0^{-1}x) = x_0^{-1}\im(x) \in L(\R)\setminus \{0\}$, showing that $X$ is not positively hyperbolic.
If $x_0=0$, then consider a small perturbation 
$[\epsilon : \tilde{x}]
\in \ol{X}$ where  $\epsilon$ is a small real number.  This is possible because
$\{ [x_0: x] \in \overline{X} : x_0 \in \R \setminus \{0\}\} = X$ is dense in
$\overline{X}$.
For sufficiently small $\epsilon$, 
$\im(\tilde{x})$ belongs to a positive
linear subspace $\tilde{L} \in \Gr_+(c,n)$. 
By replacing $y$ by
$[\epsilon : \tilde{x}]$ and $L$ by $\tilde{L}$, the problem reduces to
the case $x_0 \in \R \setminus \{0\}$ shown above.
 
($\Leftarrow$)  Let $X$ be hyperbolic with respect to all linear subspaces $\{0\}\times L$ where $L\in \Gr_+(c,n)$. 
Suppose there exists a point $x \in X$ with $\im(x) \in L(\R)$ for some $L\in \Gr_+(c,n)$.
Then the point $y = (1,x)$ satisfies $[y]\in \ol{X}$ and $\im(y)\in \{0\}\times L(\R)$. 
The hyperbolicity of $X$, along with \cite[Prop. 1.3]{KV}, implies that $\im(y)=0$,
which shows that $X$ is positively hyperbolic. 
\end{proof}

 \section{Operations preserving positive hyperbolicity}\label{sec:preservers}
 
Part of the rich theory of stable polynomials 
is a classification of linear operations preserving stability. 
Some basic examples are the following. 

\begin{lemma}[{\cite[Lemma 2.4]{wagnerSurvey}}] \label{lem:StablePreservers}
The following operations on $\C[x_1, \hdots, x_n]$ 
preserve stability.
\begin{itemize}
\item Permutation: $f\mapsto f(x_{\pi(1)}, \hdots, x_{\pi(n)})$ for any $\pi \in S_n$
\item Scaling:  $f\mapsto c f(a_1 x_1, \hdots, a_nx_n)$ for any $c\in \R$, $a_1, \hdots, a_n\in \R_{\geq 0}$
\item Diagonalization: $f\mapsto f(x_1, x_1, x_3, \hdots, x_n)$
\item Specialization:  $f\mapsto f(a, x_2, x_3, \hdots, x_n)$ for $a\in \R_{\geq 0}$
\item Inversion:  $f\mapsto x_1^{d} f(-x_1^{-1}, x_2, x_3, \hdots, x_n)$ where $f$ has degree $d$ in $x_1$
\item Differentiation:  $f\mapsto \partial f/\partial x_1$
\end{itemize}
\end{lemma}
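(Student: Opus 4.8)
The plan is to verify each of the six listed operations one at a time, working directly from the definition of stability: $f$ is stable iff $f(z)\neq 0$ whenever $\im(z)\in\R_+^n$. For each operation I would show that if the output polynomial vanished at some point with imaginary part in the open positive orthant, then the input polynomial would vanish at a corresponding point with imaginary part in the open positive orthant, contradicting stability of the input.

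First I would dispatch the easy cases. \textbf{Permutation} is immediate since permuting coordinates carries $\R_+^n$ to itself. \textbf{Scaling} by $c\in\R$ is trivial (multiplying by a nonzero constant does not change the zero set, and $c=0$ gives the zero polynomial, which we may either allow or exclude by convention), and scaling $x_i\mapsto a_ix_i$ with $a_i\ge 0$ sends a point $z$ with $\im(z_i)>0$ to a point with imaginary parts $a_i\im(z_i)\ge 0$ — one must note that if some $a_i=0$ the $i$-th coordinate becomes real, but then a standard continuity/density argument (perturb the other coordinates slightly, or note the zero set of a nonzero polynomial meets $\{z:\im(z_i)=0\}$ in something one can push off) handles it; alternatively invoke that scaling by strictly positive $a_i$ is an automorphism of the positive orthant and the $a_i=0$ case follows by taking limits. \textbf{Diagonalization} and \textbf{Specialization}: if $f(x_1,x_1,x_3,\dots)$ vanishes at $z$ with $\im(z)\in\R_+^n$, then $f$ vanishes at $(z_1,z_1,z_3,\dots)$ whose imaginary part is $(\im z_1,\im z_1,\im z_3,\dots)\in\R_+^n$; similarly $f(a,x_2,\dots)$ vanishing at $z$ with $\im z\in\R_+^{n-1}$ gives $f$ vanishing at $(a,z_2,\dots)$ with imaginary part $(0,\im z_2,\dots)$ — again the boundary coordinate $\im(a)=0$ is handled by a perturbation argument, replacing $a$ by $a+i\epsilon$ and using continuity of roots, or by the observation that specialization is a composition of a (strictly positive) translation-limit of diagonalizations.

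\textbf{Differentiation} is the one genuinely nontrivial case, and I expect it to be the main obstacle — it is the Gauss–Lucas phenomenon in disguise. The standard argument: fix $z_2,\dots,z_n$ with $\im(z_j)>0$ and regard $g(x_1)=f(x_1,z_2,\dots,z_n)$ as a univariate polynomial; since $f$ is stable, $g$ has no roots in the open upper half-plane, i.e.\ all its roots lie in the closed lower half-plane. By the Gauss–Lucas theorem the roots of $g'$ lie in the convex hull of the roots of $g$, hence also in the closed lower half-plane, so $g'$ has no root in the open upper half-plane; since $z_2,\dots,z_n$ were arbitrary in the open polydisc-like region, $\partial f/\partial x_1$ is stable. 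One subtlety is the case where $g$ is identically zero or drops degree, but this only occurs on a proper subvariety of the $(z_2,\dots,z_n)$, and stability is a closed condition under the relevant limits. \textbf{Inversion} can then be reduced to the others, or argued directly: the map $w\mapsto -1/w$ sends the closed lower half-plane minus the origin onto itself (it preserves the imaginary sign because $\im(-1/w)=\im(w)/|w|^2$), so if $x_1^d f(-x_1^{-1},x_2,\dots)$ vanished at a point with all imaginary parts positive, applying $x_1\mapsto -1/x_1$ would produce a zero of $f$ with imaginary parts still positive — again modulo the harmless cases $x_1=0$ or $\infty$, handled by the $x_1^d$ clearing of denominators and a limiting argument. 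Throughout, the recurring technical point — rather than a deep one — is the treatment of boundary situations where an imaginary part or a leading coefficient degenerates, all of which yield to continuity of roots / density of the interior.
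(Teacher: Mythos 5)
The paper itself does not prove this lemma; it only cites Wagner's survey, so the real question is whether your argument stands on its own. Most of it does, and it is the standard argument: permutation, diagonalization, and inversion are genuinely direct (for inversion no limiting is needed at all, since $\im z_1>0$ forces $z_1\neq 0$ and $\im(-1/z_1)=\im(z_1)/|z_1|^2>0$), and the boundary cases in scaling (some $a_i=0$) and specialization (real $a$) are correctly handled once ``continuity of roots'' is replaced by the precise tool, namely Hurwitz's theorem: a locally uniform limit of polynomials nonvanishing on the connected open set $\{z:\im(z_j)>0 \text{ for all } j\}$ is either nonvanishing there or identically zero (whence the usual convention that the identically zero polynomial counts as stable).

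The genuine gap is in the differentiation step, at exactly the point you wave at. Fix $z'=(z_2,\dots,z_n)$ with positive imaginary parts and set $g(x_1)=f(x_1,z')$. Gauss--Lucas handles every fiber where $g$ is nonconstant, but if $g$ were a nonzero constant then $g'\equiv 0$, i.e.\ $\partial f/\partial x_1$ would vanish at $(z_1,z')$ for every $z_1$ in the upper half-plane, and (unless $\partial f/\partial x_1\equiv 0$) stability of the derivative would fail. Your proposed patch --- that the bad $z'$ form a proper subvariety and ``stability is a closed condition under the relevant limits'' --- does not close this: applying Hurwitz along nondegenerate $z'_k\to z'$ only yields that $\partial f/\partial x_1(\,\cdot\,,z')$ is nonvanishing on the upper half-plane \emph{or identically zero}, and the identically-zero alternative is precisely the scenario you must exclude, so genericity plus closedness proves nothing here. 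The missing ingredient is the standard auxiliary fact that if $f$ is stable with $\deg_{x_1}f=d\geq 1$, then its leading coefficient $c_d(x_2,\dots,x_n)$ with respect to $x_1$ is itself stable: $\epsilon^{d}f(x_1/\epsilon,x_2,\dots,x_n)$ is stable for $\epsilon>0$ and converges coefficientwise to $c_d(x_2,\dots,x_n)\,x_1^{d}$, which by Hurwitz is stable since it is not identically zero. Hence $c_d(z')\neq 0$ for every $z'$ in the open orthant, so $g$ always has degree exactly $d\geq 1$, never degenerates to a constant, and your fiberwise Gauss--Lucas argument then goes through. With that lemma inserted the proof is complete.
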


Borcea and Br\"and\'en gave a full characterization of linear operations preserving stability \cite{BB}. 
See also Section 5 of \cite{wagnerSurvey}. 
Some of these operations extend immediately to preservers of positive
hyperbolicity for varieties,
but the analogues of others are less clear.

\begin{proposition}\label{prop:linearPreserver}
Let $T:\C^n \to \C^m$ be a surjective linear map defined over the real numbers.
For $c\leq m$, the following are equivalent: 
\begin{itemize}
\item[(a)] For all equidimensional positively hyperbolic varieties $X \subset \C^m$ of codimension $c$, $T^{-1}(X)$ is positively hyperbolic.
\item[(b)] For all $x \in \R^n\setminus \{0\}$ with $\varbar(x) < c$, we have $\varbar(T(x)) < c$.
\item[(c)] For all $L \in \Gr_+(c,n)$ we have $T(L) \in \Gr_+(c,m)$.
\item[(d)] All of the nonzero $c \times c$ minors of the matrix representing $T$ have the same sign.
\end{itemize}
\end{proposition}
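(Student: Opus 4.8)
The plan is to prove the equivalences by establishing a cycle of implications, using Lemma~\ref{lem:Karp}, Proposition~\ref{prop:signs}, and Theorem~\ref{thm:GK} as the main tools. The most natural route is $(a) \Leftrightarrow (b)$, then $(b) \Leftrightarrow (c)$, then $(c) \Leftrightarrow (d)$.

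For $(b) \Rightarrow (a)$, I would take an equidimensional positively hyperbolic $X \subset \C^m$ of codimension $c$ and a point $x \in T^{-1}(X)$. By Proposition~\ref{prop:signs} applied to $X$, the point $T(x) \in X$ satisfies $\varbar(\im(T(x))) \geq c$, i.e. $\varbar(T(\im(x))) \geq c$ since $T$ is real. If $T^{-1}(X)$ were not positively hyperbolic, there would be $x \in T^{-1}(X)$ with $\varbar(\im(x)) < c$ (again by Proposition~\ref{prop:signs}, noting that $T^{-1}(X)$ is equidimensional of codimension $c$ because $T$ is surjective and linear); but then $(b)$ forces $\varbar(T(\im(x))) < c$, a contradiction. (One should be slightly careful here: if $\im(x) = 0$ then $\varbar(\im(x))=0<c$ but $\im(x) \notin L\setminus\{0\}$ trivially, so the relevant case is $\im(x)\neq 0$; the hypothesis in $(b)$ is stated for $x \neq 0$, which is exactly what we need.) For $(a) \Rightarrow (b)$, I would argue contrapositively: given $x \in \R^n \setminus \{0\}$ with $\varbar(x) < c$ but $\varbar(T(x)) \geq c$ is false — wait, rather, suppose $(b)$ fails, so there is $x$ with $\varbar(x)<c$ and $\varbar(T(x)) \geq c$ false, i.e. we want to produce a positively hyperbolic $X$ whose preimage is not. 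Take $y = T(x)$ with $\varbar(y) < c$; by Lemma~\ref{lem:Karp}(2) there is $L \in \Gr_+(c,m)$ with $y \in L$. Now $L$ itself, viewed as a complex linear subspace $L \otimes \C \subset \C^m$ of codimension $m-c$ — hmm, this has the wrong codimension. Instead I would choose $X$ to be a suitable positively hyperbolic variety of codimension $c$ through a point with imaginary part $y$: e.g. take a positive linear subspace $L' \in \Gr_+(m-c, m)$ and let $X$ be a generic translate, or more concretely use $X = \{z : \im(z) \in L\}$-type construction. Cleanest is: let $X$ be the affine coordinate subspace or, better, invoke that a point $iy$ with $y$ of bad sign variation certifies non-hyperbolicity — so take $X$ a positively hyperbolic variety (e.g. a reciprocal linear space or a real linear space from Proposition~\ref{prop:linearstable}) containing $iy'$ in its imaginary projection for $y'$ close to $y$, chosen so that $x$'s preimage witnesses a violation. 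I expect this direction to require a small genericity/perturbation argument to land $x$ itself (not just its image) in the preimage variety.

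For $(b) \Leftrightarrow (c)$: by Lemma~\ref{lem:Karp}(2), the set $\{x \in \R^n\setminus\{0\} : \varbar(x) < c\}$ is exactly the union of all $L \in \Gr_+(c,n)$, and similarly in $\R^m$. So $(b)$ says $T$ maps $\bigcup_{L \in \Gr_+(c,n)} L$ into $\bigcup_{L' \in \Gr_+(c,m)} L'$. One direction: if $T(L) \in \Gr_+(c,m)$ for every $L \in \Gr_+(c,n)$, then any $x$ with $\varbar(x)<c$ lies in some such $L$, hence $T(x) \in T(L) \in \Gr_+(c,m)$, giving $\varbar(T(x)) < c$ (here I need $T|_L$ injective so $T(L)$ is genuinely $c$-dimensional — this should follow since a positive subspace meets every coordinate subspace transversally, or can be folded into the statement). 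Conversely, if $(b)$ holds and $L \in \Gr_+(c,n)$, then $T(L)$ is a linear subspace all of whose nonzero points $T(x)$ satisfy $\varbar(T(x)) < c$; by Theorem~\ref{thm:GK}(2) this forces $T(L) \in \Gr_+(\dim T(L), m)$, and a dimension count (positive subspaces have $\varbar < \dim$, and one checks $\dim T(L) = c$) gives $T(L) \in \Gr_+(c,m)$.

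For $(c) \Leftrightarrow (d)$: this is the concrete linear-algebra heart. Write $T$ as an $m \times n$ matrix $A$. Condition $(c)$ says: for every $c \times n$ matrix $C$ with all maximal minors positive, the $c \times m$ matrix... no — a subspace $L \in \Gr_+(c,n)$ is a rowspan; $T(L)$ is spanned by the rows of $C A^{\top}$ where $C$ has positive maximal minors. By the Cauchy–Binet formula, the $c \times c$ minors of $C A^\top$ are sums $\sum_{S} \det(C_{\cdot,S}) \det((A^\top)_{S,\cdot}) = \sum_S \det(C_S) \det(A_S')$ of products of maximal minors of $C$ (all positive) with $c \times c$ minors of $A$. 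If all nonzero $c\times c$ minors of $A$ share a common sign, every such sum is either zero or has that sign, and one shows genericity of $C$ makes it nonzero — so $T(L) \in \Gr_{+}(c,m)$ (after checking it cannot be identically the zero functional, i.e. $\mathrm{rank}\, A \geq c$, which is part of the setup since $T$ is surjective and $c \leq m$). Conversely, if two $c\times c$ minors of $A$ had strictly opposite signs, I would choose $C$ (with positive maximal minors — possible by a classical Vandermonde-type construction) concentrating weight so that Cauchy–Binet produces a negative $c\times c$ minor of $CA^\top$, contradicting $(c)$; isolating a single minor this way is the slightly delicate combinatorial step, handled by choosing $C$ supported near a single $c$-subset with a small positive perturbation to keep all its minors positive.

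The main obstacle I anticipate is the $(a) \Rightarrow (b)$ implication: manufacturing, from a bare sign-variation violation of $T$, an actual positively hyperbolic variety $X$ of the correct codimension $c$ whose preimage $T^{-1}(X)$ is genuinely non-positively-hyperbolic. The cleanest fix is to use the real linear subspaces from Proposition~\ref{prop:linearstable}: pick $L \in \Gr_+(c,m)$ containing the bad image vector $y = T(x)$, let $X$ be a real $(m{-}c)$-dimensional linear space with $(X\cap\R^m)^\perp$ nonnegative and chosen so that its imaginary projection contains a vector arbitrarily close to $y$ (using that $\im$ of such an $X$ is $(m{-}c)$-dimensional and can be steered), and then lift through $T$; the perturbation is absorbed by the openness noted after Lemma~\ref{lem:Karp}. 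The rest is bookkeeping with Cauchy–Binet and Theorem~\ref{thm:GK}.
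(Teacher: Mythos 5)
Your overall plan (a cycle of equivalences via Lemma~\ref{lem:Karp}, Theorem~\ref{thm:GK} and Cauchy--Binet) is reasonable, and your (b)~$\Rightarrow$~(a) and (c)~$\Rightarrow$~(b) match the paper. But the two implications where the real work lies have genuine gaps. For (a)~$\Rightarrow$~(b) you negate (b) incorrectly: failure of (b) gives $x \in \R^n\setminus\{0\}$ with $\varbar(x) < c$ but $\varbar(T(x)) \geq c$, whereas you work with $\varbar(y) < c$ for $y = T(x)$ and then ``pick $L \in \Gr_+(c,m)$ containing $y$.'' Under the correct negation no such $L$ exists (Lemma~\ref{lem:Karp}(2)), and under your negation the rest of your construction is impossible: if $X$ is a real $(m-c)$-dimensional subspace with $(X\cap\R^m)^\perp$ nonnegative, then by Theorem~\ref{thm:GK}(1) every nonzero vector of $\im(X) = X\cap\R^m$ has $\varbar \geq c$, while every vector near $y$ has $\varbar < c$ by openness, so $\im(X)$ cannot contain anything close to $y$. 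What your route actually requires is a dual-Karp existence statement -- given $y$ with $\varbar(y)\geq c$, produce a positively hyperbolic codimension-$c$ variety through a point with imaginary part $y$ -- which you assert can be ``steered'' but never prove. The paper sidesteps this entirely: instead of (a)~$\Rightarrow$~(b) it proves (a)~$\Rightarrow$~(d), applying (a) only to the linear varieties $X = L^\perp$ for $L \in \Gr_+(c,m)$ (positively hyperbolic by Proposition~\ref{prop:linearstable}), concluding via Proposition~\ref{prop:linearstable} that $A\cdot M$ has all $c\times c$ minors of one sign for every positive $c\times m$ matrix $A$, and then deducing (d) by scaling the columns of $A$ indexed by $I$ and using a continuity/intermediate-value argument, with surjectivity of $T$ ruling out the simultaneous vanishing of all minors.

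Your (c)~$\Rightarrow$~(d) has a similar logical flaw: producing one negative $c\times c$ minor of $CA^\top$ does not contradict $T(L) \in \Gr_+(c,m)$, because the Pl\"ucker vector of a subspace is only determined up to a global sign. Indeed, if you concentrate $C$ near a single column set $J_2$ with $\det(A_{I_2,J_2}) < 0$, then \emph{all} minors of $CA^\top$ are approximately positive multiples of the $\det(A_{I,J_2})$, which under (c) all share a sign, so no contradiction arises; the delicate point is not ``isolating a single minor'' but exhibiting one $L$ whose image has two minors of opposite sign (or a zero one). Fixing this requires interpolating between weight concentrated near $J_1$ and near $J_2$ and arguing that the common sign cannot flip along the path without some minor vanishing -- essentially the same scaling argument, plus the surjectivity input, that the paper deploys in (a)~$\Rightarrow$~(d). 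Two smaller points: in (d)~$\Rightarrow$~(c), ``genericity of $C$'' cannot handle a vanishing minor since (c) must hold for \emph{every} $L \in \Gr_+(c,n)$; the correct fix (as in the paper) is that vanishing of the $I$-th minor of $CA^\top$ forces $\det(A_{I,J}) = 0$ for all $J$, so the rows of $A$ indexed by $I$ are dependent, contradicting the full row rank coming from surjectivity. And in (b)~$\Rightarrow$~(c) you should justify $\dim T(L) = c$: if some nonzero $x \in L$ had $T(x) = 0$, then $\varbar(T(x)) = m-1 \geq c$ in the relevant range, violating (b).
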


\begin{proof}
(d $\Rightarrow$ c) Let $M$ denote the $m\times n$ matrix representing the linear map $T$.
We can write any linear subspace $L \subset \R^n$ of dimension $c$ 
as the column span of a $n\times c$ matrix $B$. Then $T(L)$ is the column span of the $m\times c$ matrix $M\cdot B$. By the Cauchy-Binet formula, 
for any $I\in \binom{[m]}{c}$, the maximal minor of $M\cdot B$ corresponding to $I$ is given by 
\[
\det((M\cdot B)_I) \ = \ \sum_{J\in \binom{[n]}{c}} \det(M_{I,J})\cdot \det(B_{J}).
\]
If $L\in \Gr_+(c,n)$, then we can take $B$ such that all its maximal minors are positive. If all of the nonzero $c\times c$ minors of $M$ 
have the same sign, then from the formula above, 
all of the nonzero $c\times c$ minors of $M\cdot B$ also have this sign.  
Moreover, if some minor $\det((M\cdot B)_I) $ equals zero
then $\det(M_{I,J})$ must be zero for all $J\in \binom{[n]}{c}$, meaning that the $c\times n$ matrix $M_{I,\cdot}$ has rank $<c$. 
Note that this matrix represents the composition of the linear map $T$ with the projection $\pi_I:\C^m\rightarrow \C^I$. The surjectivity of $T$ implies that 
this composition is also surjective, meaning that the matrix $M_{I,\cdot}$ has full rank $c$, giving a contradiction. 
All together this shows that $M\cdot B$ has full rank $c$ with all nonzero minors of the same sign. Therefore $T(L)\in \Gr_+(c,n)$.

(c $\Rightarrow$ b) 
Let $x\in \R^n$ with $\varbar(x)<c$. By Lemma~\ref{lem:Karp}, there exists a 
linear space $L\in \Gr_+(c,n)$ with $x\in L$.  
Then $T(L)$ belongs to the positive Grassmannian $\Gr_+(c,m)$. 
This implies that $T$ is injective on $L$, and so $T(x)$ is an element of $T(L)\backslash\{0\}$. 
Using this lemma again we get that $\varbar(T(x))<c$. 

(b $\Rightarrow$ a) 
Note that because $T$ is surjective and $X$ has codimension $c$ in $\C^m$,
$T^{-1}(X)$ is a variety of codimension $c$ in $\C^n$. Suppose that $T^{-1}(X)$ is not positively hyperbolic, 
meaning that there exists a point $y\in T^{-1}(X)$ with
$\varbar(\im(y))<c$.  Consider the point $x = T(y) \in X$.  
By linearity, $\im(x)$ equals $T(\im(y))$, which by assumption satisfies $\varbar(\im(x))<c$,
contradicting the 
positive hyperbolicity of $X$.

(a $\Rightarrow$ d) 
For $L\in \Gr_{+}(c,m)$, consider the linear variety $X=L^{\perp}$ defined by 
$\ell\cdot x=0$ for $\ell\in L$. 
By Proposition~\ref{prop:linearstable},  $X$ is positively hyperbolic and therefore so is $T^{-1}(X)$. 
The variety 
$T^{-1}(X)$ is also a linear variety, defined by $\ell\cdot T(y) = \ell \cdot M y=0$. 
If $L$ is the row span of a $c\times m$ matrix $A$, then the orthogonal complement of $T^{-1}(X) \cap \R^n$
is the row span of the $c\times n$ matrix $A\cdot M$. 
By Proposition~\ref{prop:linearstable},  this linear space belongs to $\Gr_{\geq 0}(c,n)$, 
and so the $c\times c$ minors of $A\cdot M$ all have the same sign. 

Without loss of generality, we may assume that all the $c\times c$ minors of $A\cdot M$ are nonnegative, 
and suppose for the sake of contradiction that some $c\times c$ minor of $M$ is negative, say 
$\det(M_{I,J})<0$ for some $I\in \binom{[m]}{c}$ and $J\in \binom{[n]}{c}$. 
Let $\lambda^I\cdot A$ denote the matrix obtained from scaling the $i$th column of 
$A$ by $\lambda\in \R_+$ for all $i\in I$.  
The resulting matrix $\lambda^I\cdot A$ 
still has positive minors and so the $c\times c$ minors of  $\lambda^I \cdot A\cdot M$ all have the same sign. 
For $\lambda=1$ they are positive. As $\lambda \to \infty$, the $J$th minor of $\lambda^I \cdot A\cdot M$ must become negative. 
This implies that there is some $\lambda\in (1,\infty)$ at which all the $c\times c$ minors of   
$\lambda^I\cdot  A \cdot M$ change sign. 
In particular all the $c \times c$ minors must be zero, meaning that the matrix drops rank.  
However this contradicts the surjectivity of the composition of $T$ with the 
linear map $\C^m\to \C^c$ defined by $\lambda^I\cdot A$.
Therefore every $c\times c$ minor of $M$ must have the same sign. 
\end{proof}

\begin{corollary}\label{cor:simplePreservers}
The following operations preserve positive hyperbolicity of equidimensional varieties of codimension $c$ in $\C^n$: 
\begin{itemize}
\item Scaling:  $X\mapsto T(X)$ where $T(x_1, \hdots, x_n) = (a_1x_1, \hdots, a_nx_{n})$ where $a_i \in \R_{>0}$
\item Cyclic permutation: $X\mapsto {\rm cyc}_c(X)$ where ${\rm cyc}_c(x_1, \hdots, x_n) = ((-1)^{c-1} x_n, x_1, \hdots, x_{n-1})$
\item  Reversal: $X\mapsto {\rm rev}(X)$ where ${\rm rev}(x_1, \hdots, x_n) = (x_n, x_{n-1}, \hdots, x_{1})$
\item  Negation: $X\mapsto -{\rm id}(X)$ where $-{\rm id}(x_1, \hdots, x_n) = (-x_1, -x_2, \hdots, -x_n)$
\item Inversion:  $X\mapsto \overline{T(X)}^{Zar}$ where $T(x_1, \hdots, x_n) =  (x_1, \hdots,x_{i-1}, -x_{i}^{-1}, x_{i+1}, \hdots, x_n)$. 
\end{itemize}
\end{corollary}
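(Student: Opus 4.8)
The plan is to treat the four linear operations (scaling, cyclic permutation, reversal, negation) by reducing to Proposition~\ref{prop:linearPreserver}, and to handle inversion separately by a direct sign argument, since it is not linear. For an \emph{invertible} real linear map $T\colon\C^n\to\C^n$ one has $T(X)=(T^{-1})^{-1}(X)$, so by Proposition~\ref{prop:linearPreserver} applied to the map $T^{-1}$ it suffices to verify that $T^{-1}$ satisfies one of the equivalent conditions there — most conveniently condition (d), that all nonzero $c\times c$ minors of the matrix of $T^{-1}$ have the same sign, or condition (b), that $\varbar(x)<c$ implies $\varbar(T^{-1}(x))<c$, or condition (c), that $T^{-1}$ maps $\Gr_+(c,n)$ into itself.

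\textbf{The elementary linear cases.} For scaling by positive reals, $T^{-1}$ is again scaling by positive reals; its matrix is diagonal with positive entries, so every nonzero $c\times c$ minor is positive and condition (d) holds. For negation, $T^{-1}=-\mathrm{id}$: the only nonzero $c\times c$ minors of $-I_n$ are the principal ones, each equal to $(-1)^c$, so (d) holds; equivalently $\varbar(-x)=\varbar(x)$, giving (b). For reversal, $T^{-1}=\mathrm{rev}$ has matrix the antidiagonal permutation matrix; on any fixed choice of $c$ rows there is exactly one nonzero $c\times c$ minor, obtained by taking the mirror-image columns, and its value is $(-1)^{\binom c2}$, independent of the rows, so (d) holds; equivalently $\varbar(\mathrm{rev}(x))=\varbar(x)$, giving (b).

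\textbf{The cyclic permutation.} For the twisted cyclic permutation $\mathrm{cyc}_c$, its inverse is $\mathrm{cyc}_c^{-1}(x_1,\dots,x_n)=(x_2,\dots,x_n,(-1)^{c-1}x_1)$, which is a power of $\mathrm{cyc}_c$ (this map has finite order). Postnikov's cyclic symmetry of the nonnegative Grassmannian says precisely that $\mathrm{cyc}_c$ maps $\Gr_+(c,n)$ into itself — concretely, if the maximal minors of a $c\times n$ matrix are positive, then so are those of the matrix obtained by moving the first column to the end after multiplying it by $(-1)^{c-1}$, by the usual reordering-of-columns sign bookkeeping. Hence every power of $\mathrm{cyc}_c$, in particular $\mathrm{cyc}_c^{-1}$, preserves $\Gr_+(c,n)$, which is condition (c) for $\mathrm{cyc}_c^{-1}$; Proposition~\ref{prop:linearPreserver} then yields that $\mathrm{cyc}_c(X)$ is positively hyperbolic. (Alternatively one verifies (d) directly: the nonzero maximal minors of the matrix of $\mathrm{cyc}_c$ are all $\pm1$ with one consistent sign.)

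\textbf{Inversion, and the main point.} Finally consider inversion $T(x_1,\dots,x_n)=(x_1,\dots,x_{i-1},-x_i^{-1},x_{i+1},\dots,x_n)$. We may assume $X$ is irreducible (positive hyperbolicity is checked on components, and $\overline{T(\,\cdot\,)}^{Zar}$ distributes over the decomposition into components) and that $X\not\subseteq V(x_i)$, so that $T$ restricts to an isomorphism of the dense open subset $X\cap\{x_i\neq 0\}$ of $X$ onto its image; thus $Y:=\overline{T(X)}^{Zar}$ is irreducible of codimension $c$, and $T(X\cap\{x_i\neq0\})$ is dense in $Y$ in the Euclidean topology. By Proposition~\ref{prop:signs} it suffices to show $\varbar(\im(y))\geq c$ for every $y\in Y$. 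The key observation is that for any $x\in\C^n$ with $x_i\neq0$ we have $-x_i^{-1}=-\overline{x_i}/|x_i|^2$, hence $\im(-x_i^{-1})=\im(x_i)/|x_i|^2$: so $\im(T(x))$ agrees with $\im(x)$ outside coordinate $i$, where it is a strictly positive multiple of $\im(x_i)$. Therefore $\im(T(x))$ and $\im(x)$ have identical sign patterns and $\varbar(\im(T(x)))=\varbar(\im(x))$. Since $X$ is positively hyperbolic, $\varbar(\im(x))\geq c$ for all $x\in X$, so $\varbar(\im(y))\geq c$ on the dense subset $T(X\cap\{x_i\neq0\})$ of $Y$; and since the set $\{y:\varbar(\im(y))<c\}$ is open (by the remark following Lemma~\ref{lem:Karp}, $\im$ being continuous), the inequality extends to all of $Y$. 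The only delicate step in the whole argument is this last passage from a dense subset to its closure in the inversion case, but it is handled entirely by the openness of $\{\varbar(\im(\cdot))<c\}$; I do not anticipate a genuine obstacle, as the corollary is essentially bookkeeping on top of Propositions~\ref{prop:linearPreserver} and~\ref{prop:signs} together with the cyclic symmetry of $\Gr_+(c,n)$.
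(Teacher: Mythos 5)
Your proof is correct and follows essentially the same route as the paper: the four linear operations are handled via Proposition~\ref{prop:linearPreserver} (the paper leaves implicit the step of applying it to the inverse map and checking condition (c)/(d), which you spell out), and inversion is handled by the observation that $\im(z)$ and $\im(-z^{-1})$ have the same sign, with your density/openness argument filling in the passage to the Zariski closure that the paper leaves to the reader.
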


\begin{proof} The first four follow from Proposition~\ref{prop:linearPreserver}. 
For the last, we note that for any nonzero complex number $z$, 
$\im(z)$ and $\im(-z^{-1})$ 
have the same sign. 
\end{proof}

We now characterize all signed permutations that preserve positive hyperbolicity.
Let $B_n$ denote the hyperoctahedral group (or signed symmetric group) consisting of all $2^n \, n!$ signed coordinate permutations of $\C^n$, i.e.,
all maps $\phi: \C^n \to \C^n$ of the form $\phi(x_1, \dots, x_n) = (\pm x_{\sigma(1)}, \dots, \pm x_{\sigma(n)})$ for some
permutation $\sigma$ of $[n]$. 
The signed permutations with all signs positive form a copy of the symmetric group $S_n \leq B_n$. 
In the next proposition we refer to some particular elements of $B_n$,
namely the operations ${\rm cyc}_c$, ${\rm rev}$, and $-{\rm id}$ defined in Corollary~\ref{cor:simplePreservers}.

\begin{proposition}
Fix $1 \leq c \leq n-1$, and let $G \leq B_n$ be the subgroup consisting of all signed coordinate permutations that preserve positive hyperbolicity of equidimensional varieties of codimension $c$ in $\C^n$.
We have:
\begin{enumerate}
\item If $c=1$ then $G = \langle S_n, -{\rm id} \rangle \cong S_n \times \mathbb Z/2\mathbb Z$.
\item If $2 \leq c \leq n-2$ then $G = \langle {\rm cyc}_c, {\rm rev}, -{\rm id} \rangle$. If $c$ is odd then $G \cong D_{2n} \times \mathbb Z/2\mathbb Z$,
while if $c$ is even then $G$ is isomorphic to $D_{4n}$. 
\item If $c = n-1$ then $G$ is isomorphic to $S_n \times \mathbb Z/2\mathbb Z$.
\end{enumerate}
\end{proposition}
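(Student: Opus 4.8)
The plan is to run everything through Proposition~\ref{prop:linearPreserver}(d): a signed coordinate permutation $\phi$ belongs to $G$ if and only if all nonzero $c\times c$ minors of its matrix $M_\phi$ have a common sign; since $M_\phi$ is a signed permutation matrix these minors lie in $\{0,\pm1\}$, so the condition is that all nonzero $c\times c$ minors of $M_\phi$ are \emph{equal}. The inclusion $\langle {\rm cyc}_c,{\rm rev},-{\rm id}\rangle\subseteq G$ (and $\langle S_n,-{\rm id}\rangle\subseteq G$ when $c=1$) is already furnished by Corollary~\ref{cor:simplePreservers}, so the task is the reverse inclusion together with the identification of the isomorphism type. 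Throughout I write $\phi(x)_i=\epsilon_i x_{\sigma(i)}$, so $(M_\phi)_{ij}=\epsilon_i$ when $j=\sigma(i)$ and $0$ otherwise; since a signed permutation matrix is orthogonal, $M_{\phi^{-1}}$ is the transpose of $M_\phi$, and a matrix and its transpose have the same minors, so $G$ is indeed closed under inverses.

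The two boundary cases are quick. For $c=1$ the nonzero $1\times1$ minors of $M_\phi$ are precisely the signs $\epsilon_i$, so $\phi\in G$ iff all $\epsilon_i$ agree, i.e.\ $\phi\in S_n\cup(-{\rm id})S_n=\langle S_n,-{\rm id}\rangle$, and since $-{\rm id}\notin S_n$ this is $S_n\times\Z/2\Z$. For $c=n-1$, expanding $\det(M_\phi)$ along row $i$ shows that the unique nonzero minor obtained by deleting row $i$ and column $\sigma(i)$ equals $(-1)^{i+\sigma(i)}\epsilon_i\det(M_\phi)$; as $\det(M_\phi)$ is a fixed nonzero scalar, $\phi\in G$ iff $(-1)^{i+\sigma(i)}\epsilon_i$ is independent of $i$, i.e.\ $\epsilon_i=e\,(-1)^{i+\sigma(i)}$ for a global sign $e\in\{\pm1\}$, with \emph{no} restriction on $\sigma$. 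A short computation then shows that $\sigma\mapsto\phi_\sigma$, where $\phi_\sigma(x)_i=(-1)^{i+\sigma(i)}x_{\sigma(i)}$, is an injective group homomorphism $S_n\hookrightarrow B_n$ whose image does not contain $-{\rm id}$, so $G=\langle\{\phi_\sigma:\sigma\in S_n\},-{\rm id}\rangle\cong S_n\times\Z/2\Z$.

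The content is the middle range $2\le c\le n-2$. For $I=\{i_1<\cdots<i_c\}$ the unique nonzero $c\times c$ minor of $M_\phi$ with row set $I$ is $\big(\prod_{a}\epsilon_{i_a}\big)\sgn(\pi_I)$, where $\pi_I\in S_c$ records the relative order of $\sigma(i_1),\dots,\sigma(i_c)$. Comparing $I=S\cup\{p\}$ with $I'=S\cup\{q\}$ for a fixed $(c-1)$-element set $S$ disjoint from $\{p,q\}$, equality of the two minors gives
\[
\epsilon_p\epsilon_q=(-1)^{N_S(p)+N_S(q)},\qquad N_S(t):=|\{\,s\in S:(s-t)(\sigma(s)-\sigma(t))<0\,\}|.
\]
Setting $T_t=\{\,s\neq t:(s-t)(\sigma(s)-\sigma(t))<0\,\}$, one has $N_S(p)+N_S(q)\equiv|S\cap(T_p\triangle T_q)|\pmod2$; since the left-hand side of the displayed identity does not depend on $S$, and since the hypotheses $2\le c\le n-2$ make the $(c-1)$-subsets $S$ of the $(n-2)$-element set $[n]\setminus\{p,q\}$ plentiful enough to detect any proper nonempty subset modulo $2$, it follows that $(T_p\triangle T_q)\setminus\{p,q\}$ is either empty or all of $[n]\setminus\{p,q\}$. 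As $p\in T_p\triangle T_q\Leftrightarrow q\in T_p\triangle T_q$ (both occur exactly when $\{p,q\}$ is an inversion of $\sigma$), this reads: $T_p\triangle T_q\in\{\emptyset,\{p,q\},[n]\setminus\{p,q\},[n]\}$ for every $p\neq q$.

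The last step — which I expect to be the main obstacle — is to deduce from this rigidity that $\sigma$ lies in the dihedral subgroup $D\le S_n$ generated by the $n$-cycle $i\mapsto i-1\pmod n$ and the reversal $i\mapsto n+1-i$. I would exploit that $T_{\sigma^{-1}(1)}=\{1,\dots,\sigma^{-1}(1)-1\}$ is an initial interval of $[n]$ and $T_{\sigma^{-1}(n)}=\{\sigma^{-1}(n)+1,\dots,n\}$ a final interval; feeding these, and more generally the sets $T_{\sigma^{-1}(k)}$, into the condition above forces first that $\sigma^{-1}(1)$ and $\sigma^{-1}(2)$ are cyclically adjacent, and then inductively that $\sigma^{-1}(1),\sigma^{-1}(2),\dots,\sigma^{-1}(n)$ is a cyclic arithmetic progression of common difference $\pm1$ — that is, $\sigma\in D$. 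Once $\sigma\in D$, the relations $\epsilon_p\epsilon_q=(-1)^{N_S(p)+N_S(q)}$ determine $\epsilon$ up to a single global sign, and comparing with the signs carried by the powers of ${\rm cyc}_c$ — whose wrap-around sign $(-1)^{c-1}$ is exactly what these relations dictate — and by ${\rm rev}$ shows $\phi\in\langle{\rm cyc}_c,{\rm rev},-{\rm id}\rangle$, giving $G=\langle{\rm cyc}_c,{\rm rev},-{\rm id}\rangle$. The isomorphism type then follows from ${\rm cyc}_c^{\,n}=(-1)^{c-1}{\rm id}$: if $c$ is odd, $\langle{\rm cyc}_c\rangle\cong\Z/n\Z$ consists of signed permutations all of whose signs are $+1$, hence lies in $S_n$ and meets $\langle-{\rm id}\rangle$ trivially, so $G\cong D_{2n}\times\Z/2\Z$; if $c$ is even, $\langle{\rm cyc}_c\rangle\cong\Z/2n\Z$ already contains $-{\rm id}={\rm cyc}_c^{\,n}$, so $G=\langle{\rm cyc}_c,{\rm rev}\rangle$ is dihedral of order $4n$. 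Apart from the cyclic-rigidity step, everything is routine bookkeeping with Cauchy--Binet and cofactor expansions.
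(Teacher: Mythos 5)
Your route is genuinely different from the paper's, and apart from one step that you yourself flag it is sound. In the crucial range $2\le c\le n-2$ the paper never looks at the minors of $M_\phi$ directly: it uses Proposition~\ref{prop:linearPreserver} only to conclude that $\phi$ preserves $\Gr_+(c,n)$, hence by continuity $\Gr_{\geq 0}(c,n)$, and then imports positroid combinatorics (\cite[Theorem~7.6]{ARW1}: connected components of matroids of nonnegative $c$-planes form non-crossing partitions, and every non-crossing partition of $[n]$ with $c$ parts arises) to force the underlying permutation $\sigma$ to map non-crossing partitions with $c$ parts to non-crossing partitions; a direct construction using $2\le c\le n-2$ then puts $\sigma$ in $D_{2n}$, and the signs are eliminated by the parity of $|S\cap T|$ over $c$-sets $T$. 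You instead stay entirely inside criterion (d) of Proposition~\ref{prop:linearPreserver}: for a signed permutation matrix the nonzero $c\times c$ minors are $\bigl(\prod_{i\in I}\epsilon_i\bigr)\sgn(\pi_I)$, and comparing $S\cup\{p\}$ with $S\cup\{q\}$ gives your relation $\epsilon_p\epsilon_q=(-1)^{|S\cap(T_p\triangle T_q)|}$, whence (the swap argument needs exactly $1\le c-1\le n-3$, i.e.\ $2\le c\le n-2$) $T_p\triangle T_q\in\{\emptyset,\{p,q\},[n]\setminus\{p,q\},[n]\}$. This buys a self-contained, purely linear-algebraic argument that avoids the ARW1 machinery; the paper's proof buys brevity and makes the positroid connection explicit, which is its running theme. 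Your treatments of $c=1$ and $c=n-1$ via $1\times1$ minors and cofactors agree with the paper's conclusions, as does your analysis of ${\rm cyc}_c^{\,n}=(-1)^{c-1}{\rm id}$ and the resulting $D_{2n}\times\Z/2\Z$ versus $D_{4n}$ dichotomy.

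The step you leave at sketch level — that the rigidity of the sets $T_p\triangle T_q$ forces $\sigma$ into the dihedral group — is the real content (it replaces the paper's non-crossing-partition argument), and it does go through along the lines you indicate. For $p=\sigma^{-1}(k)$ and $q=\sigma^{-1}(k+1)$ one computes $T_p=\{1,\dots,p-1\}\,\triangle\,A$ and $T_q=\{1,\dots,q-1\}\,\triangle\,(A\cup\{p\})$ with $A=\{\sigma^{-1}(1),\dots,\sigma^{-1}(k-1)\}$, so the contribution of $A$ cancels and $T_p\triangle T_q$ equals $\{p+1,\dots,q-1\}$ if $p<q$ and $\{q,\dots,p\}$ if $p>q$; your dichotomy then forces $p,q$ to be cyclically adjacent for every $k$, and since a self-avoiding walk on the $n$-cycle visiting all vertices cannot reverse direction, $\sigma^{-1}(1),\dots,\sigma^{-1}(n)$ is a cyclic progression with step $\pm1$, i.e.\ $\sigma$ lies in the dihedral group. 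With that supplied, your closing count (each $\sigma\in D_{2n}$ admits at most two compatible sign vectors, and $\langle{\rm cyc}_c,{\rm rev},-{\rm id}\rangle\subseteq G$, known from Corollary~\ref{cor:simplePreservers}, already provides two) finishes $G=\langle{\rm cyc}_c,{\rm rev},-{\rm id}\rangle$. So I would not call this a failed step, only an unfinished one; please write it out, since as stated it is the acknowledged main obstacle. One cosmetic slip in case (3): $\phi_\sigma\circ\phi_\tau=\phi_{\tau\sigma}$, so $\sigma\mapsto\phi_\sigma$ is an anti-homomorphism rather than a homomorphism; its image is still a subgroup isomorphic to $S_n$, so the conclusion $G\cong S_n\times\Z/2\Z$ is unaffected.
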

\begin{proof}
If $c=1$ then any $\phi \in \langle S_n, -{\rm id} \rangle$ preserves the property $\overline{\var}(y) < 1$, and so by Proposition \ref{prop:linearPreserver}, $\phi$ preserves positive hyperbolicity of hypersurfaces. Now, suppose $\phi \in G$. we can compose $\phi$ with an element of $S_n$ to get a $\phi'$ of the form $\phi'(x_1,\dots,x_n)=(\pm x_{1}, \dots, \pm x_{n})$ that also preserves positive hyperbolicity. Since $\phi'$ must preserve the property $\overline{\var}(y) < 1$, it follows that $\phi'$ must be equal to $\pm {\rm id}$. This shows that $G = \langle S_n, -{\rm id} \rangle$, as claimed.

Suppose now that $2 \leq c \leq n-2$. As discussed in Corollary
\ref{cor:simplePreservers}, the three signed permutations ${\rm
  cyc}_c$, ${\rm rev}$, and $-{\rm id}$ preserve positive
hyperbolicity. Suppose now that $\phi \in G$. By Proposition
\ref{prop:linearPreserver}, $\phi$ preserves the positive Grassmannian
$\Gr_+(c,n)$, and thus by continuity, $\phi$ also preserves the
nonnegative Grassmannian $\Gr_{\geq 0}(c,n)$. The matroid associated
to any nonnegative $c$-dimensional linear subspace has connected
components that form a non-crossing partition of $[n]$ with at most
$c$ parts. (See Definition~\ref{def:noncrossing} for a definition.)
Moreover, every non-crossing partition of $[n]$  with $c$ parts arises this way  \cite[Theorem 7.6]{ARW1}. 
It follows that, if $\phi$ is given by $\phi(x_1,\dots,x_n)=(\pm x_{\sigma(1)}, \dots, \pm x_{\sigma(n)})$ for some permutation $\sigma$ of $[n]$, then $\sigma$ must send any non-crossing partition of $[n]$ with $c$ parts into a non-crossing partition. 

We claim that $\sigma$ must be a permutation in the dihedral group $D_{2n}$, i.e., $\sigma$ must send any two cyclically consecutive elements of $[n]$ to cyclically consecutive elements of $[n]$. Suppose not, and let $i, i+1$ be two cyclically consecutive elements of $[n]$ such that $\sigma(i), \sigma(i+1)$ are not cyclically consecutive (where $n+1$ denotes the element $1$). Then there exist $j, k \in [n]$ such that $j$ is in the cyclic interval $(\sigma(i), \sigma(i+1))$ and $k$ is in the cyclic interval $(\sigma(i+1), \sigma(i))$. Since $2 \leq c \leq n-2$, we can find a non-crossing partition $\mathcal P$ of $[n]$ with $c$ parts such that $i$ and $i+1$ are in the same part of $\mathcal P$, and also $\phi^{-1}(j)$ and $\phi^{-1}(k)$ are in the same part of $\mathcal P$.  (See Definition~\ref{def:noncrossing} for the formal definition of non-crossing.)
But then $\sigma$ sends the non-crossing partition $\mathcal P$ into a crossing partition, which is a contradiction. 
%
%\note{are we happy with using this terminology in the proof? We
%  formally define non-crossing partitions in Section~\ref{sec:tropical}. I think I'm fine with it} {\color{blue}I'm fine
%  with it too. We have a picture of a non-crossing partition in the
%  introduction. --JY}

Now, note that ${\rm cyc}_c, {\rm rev} \in B_n$ generate a subgroup isomorphic to the dihedral group $D_{2n}$ of $2n$ elements. Since $\sigma$ is a permutation of $[n]$ lying in the dihedral group $D_{2n}$, we can compose $\phi$ with an element of the subgroup $\langle {\rm cyc}_c, {\rm rev}\rangle$ in order to get a $\phi' \in G$ of the form $\phi'(x_1,\dots,x_n)=(\pm x_{1}, \dots, \pm x_{n})$. Denote by $S \subset [n]$ the set of coordinates $i$ such that $\phi'(x)_i = -x_i$. Since $\phi'$ preserves nonnegativity of $c$-dimensional linear subspaces, we must have that the parity of $|S \cap T|$ for $T \in \binom{[n]}{c}$ does not depend on $T$. This implies that $S = \emptyset$ or $S = [n]$, and thus $\phi' = \pm {\rm id}$, showing that $\phi \in \langle {\rm cyc}_c, {\rm rev}, -{\rm id} \rangle$, as claimed.

The structure of the subgroup $G = \langle {\rm cyc}_c, {\rm rev}, -{\rm id} \rangle$ depends on the parity of $c$. If $c$ is odd then ${\rm cyc}_c^n = {\rm id}$, and $G$ is isomorphic to $\langle {\rm cyc}_c, {\rm rev} \rangle \times \langle -{\rm id} \rangle \cong D_{2n} \times \mathbb Z/2\mathbb Z$. If $c$ is even then ${\rm cyc}_c^n = -{\rm id}$, and so ${\rm cyc}_c$ is an element of order $2n$. The subgroup $G$ is in this case isomorphic to $D_{4n}$, as it has the presentation $G = \langle {\rm cyc}_c, {\rm rev} \mid {\rm cyc}_c^{2n} = {\rm rev}^2 = ({\rm cyc}_c \cdot {\rm rev})^2 = {\rm id} \rangle$.

Finally, suppose that $c = n-1$. Note that if $\overline{\var}(y) \geq n-1$ for $y \in (\mathbb R^*)^n$ then the signs in the sequence $y_1, y_2, \dots, y_n$ alternate at every step. For any permutation $\sigma$ of $[n]$ it is possible to choose signs such that the map $\phi \in B_n$ sending $(x_1,\dots,x_n)$ to $(\pm x_{\sigma(1)}, \dots, \pm x_{\sigma(n)})$ preserves this alternation at every step. Indeed, one possible choice of signs is as follows: if $\sigma$ sends $i$ to $j$ then we can declare $\phi(x)_i = -x_j$ exactly when $i$ and $j$ have different parity. The opposite choice of signs also preserves the sign alternation, and these are the only two possible choices. This shows that the set of $\phi \in B_n$ preserving the property $\overline{\var}(y) \geq n-1$ is equal to this subgroup isomorphic to $S_n \times \mathbb Z/2 \mathbb Z$. By Proposition \ref{prop:linearPreserver}, this is the subgroup $G$ of signed coordinate permutations that preserve positive hyperbolicity, completing the proof.
\end{proof}

Understanding linear maps $T$ that preserve positive hyperbolicity when taking the image of a variety (rather than its preimage) seems to be more subtle, as $T(X)$ might have a different codimension than $X$. 
\begin{question} What linear maps $T:\C^n \rightarrow \C^m$ have the 
property that if $X$ is positively hyperbolic, then so is $T(X)$?  What about rational maps? 
\end{question}

We note, though, that projecting onto consecutive coordinates does preserve positive hyperbolicity.

\begin{lemma}[Consecutive coordinate projections]\label{lem:stableProj}
 Suppose $X \subset \C^n$ is a positively hyperbolic variety.  For any subset $S \subset [n]$ consisting of consecutive elements, the (Zariski closure of the) projection $\pi_S(X)$ of $X$ onto the coordinates indexed by $S$ is also positively hyperbolic.
\end{lemma}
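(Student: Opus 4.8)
The plan is to realize the consecutive-coordinate projection $\pi_S$ as (the transpose of) a linear map of the kind handled by Proposition~\ref{prop:linearPreserver}, and then exploit the criterion in part (b) of that proposition. Concretely, write $S = \{a, a+1, \dots, b\}$ with $|S| = s$, and let $\pi_S : \C^n \to \C^s$ be the coordinate projection. Its Zariski-closed image $Y = \overline{\pi_S(X)}^{\,Zar}$ is a variety in $\C^s$; let $c'$ be its codimension. First I would reduce to the generic situation: by Lemma~\ref{lem:stable} (after discarding any irreducible components of $X$ contained in coordinate hyperplanes, which we may treat separately) it suffices to check the positive-hyperbolicity condition on points $y \in Y$ with $\im(y) \in (\R^*)^s$, and such points lift to points $x \in X$ with $\pi_S(\im(x)) = \im(y)$.

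The key step is to bound $\varbar$ under deletion of a consecutive block of coordinates. I claim that if $w \in \R^n$ and $w'$ is obtained from $w$ by deleting the coordinates \emph{outside} of $S$ — equivalently, keeping only a consecutive window — then $\varbar(w') \le \varbar(w)$; more to the point, if $\varbar(w') < c'$ then $\varbar(w) < c'$ as well, because deleting coordinates at the two ends of the sequence can only remove (never create) sign changes, even in the worst-case sign assignment to zeros. Combining this with Proposition~\ref{prop:signs}: since $X$ is positively hyperbolic, every $x \in X$ has $\varbar(\im(x)) \ge c$ where $c = \codim X$; I must show every $y \in Y$ has $\varbar(\im(y)) \ge c'$. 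Suppose not: some $y \in Y$ with $\im(y) \in (\R^*)^s$ has $\varbar(\im(y)) < c'$. Lift to $x \in X$ with $\pi_S(\im(x)) = \im(y)$. Then $\varbar(\im(x)) \ge c$. The remaining task is to relate $c$ and $c'$: the generic fiber of $\pi_S|_X$ has dimension $\dim X - \dim Y$, so $c - c' = \codim X - \codim Y = (n-s) - (\dim X - \dim Y) \le n - s$, i.e. $c \le c' + (n-s)$. Since deleting the $n-s$ coordinates outside $S$ decreases $\varbar$ by at most $n-s$, we get $\varbar(\im(y)) \ge \varbar(\im(x)) - (n-s) \ge c - (n-s) \ge c - (c-c') = c'$, wait — this needs $c - (n-s) \ge c'$, i.e. $c - c' \ge n - s$; combined with the reverse inequality above this forces $c - c' = n - s$, which holds exactly when $\pi_S|_X$ is generically finite. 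So the argument splits into two cases.

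The main obstacle, and the place where the consecutiveness of $S$ is genuinely used, is precisely this dimension bookkeeping when $\pi_S|_X$ is \emph{not} generically finite, i.e. $c - c' < n - s$. In that case the crude "$\varbar$ drops by at most the number of deleted coordinates" estimate is too weak. To handle it, I would instead argue directly about a chosen $x$ in the generic fiber: because the fiber $\pi_S^{-1}(y) \cap X$ is positive-dimensional, I can move $x$ within this fiber (keeping $\im(y) = \pi_S(\im(x))$ fixed) to force the deleted coordinates $\im(x)_j$, $j \notin S$, to take whatever real signs I like — using that $X$ is not contained in coordinate hyperplanes and that the fiber meets the relevant sign orthants (a density argument as in Lemma~\ref{lem:stable}). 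I then invoke Lemma~\ref{lem:sign}: since the deleted coordinates split into the block $\{1,\dots,a-1\}$ before $S$ and the block $\{b+1,\dots,n\}$ after $S$, I can prescribe signs on these two consecutive blocks so that the full vector $\im(x)$ has $\varbar$ \emph{equal} to $\varbar(\im(y))$ plus at most a controlled amount — in fact the two end-blocks, being consecutive and at the ends, contribute at most $(a-1) + (n-b) = n-s$ new sign changes but can be chosen to contribute as few as possible, and crucially can be chosen so that $\varbar(\im(x)) = \varbar(\im(y)) + (\text{number of deleted coords})$ is avoided. The precise claim I want is: one can choose the signs of the deleted (consecutive end-block) coordinates so that $\varbar(\im(x)) \le \varbar(\im(y)) + (n - s) - (c - c')$ is violated — giving $\varbar(\im(x)) < c$, contradicting positive hyperbolicity of $X$. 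Carrying out this sign-optimization at the two ends, using Lemma~\ref{lem:sign} on each block separately and noting that the two blocks are separated by the window $S$ (so their contributions don't interfere beyond the two boundary transitions), is the technical heart of the proof; everything else is routine.
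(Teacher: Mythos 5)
Your reduction to the generically finite case is fine (deleting end coordinates drops $\varbar$ by at most one each, and $c-c'=n-s$ exactly when $\pi_S|_X$ is generically finite), but the case $\dim\overline{\pi_S(X)}<\dim X$ — which you correctly identify as the heart of the matter — is not actually proved. Your plan is to move $x$ inside the positive-dimensional fiber $\pi_S^{-1}(y)\cap X$ so as to ``force the deleted coordinates $\im(x)_j$, $j\notin S$, to take whatever real signs I like,'' and then invoke Lemma~\ref{lem:sign}. This claim is unjustified and false in general: the fiber has dimension $d-\dim Y$, which can be $1$ while the number of deleted coordinates $n-s$ is large, so its image in the deleted coordinates is a low-dimensional set that need not meet the sign orthants you want; nothing in Lemma~\ref{lem:stable} gives you that. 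Moreover Lemma~\ref{lem:sign} only asserts that \emph{if} a vector has certain prescribed signs then its $\varbar$ is bounded above — it does not produce points of $X$ realizing those signs, which is exactly what your contradiction needs. Note also that what you must show is quantitatively delicate: you need a lift $x$ of $y$ with $\varbar(\im(x))\le \varbar(\im(y))+(c-c')$, i.e.\ all but $c-c'$ of the $n-s$ appended coordinates must be chosen so as not to increase $\varbar$ at all, and your sketch never establishes this.

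The paper avoids this difficulty by projecting out \emph{one} coordinate at a time (the last one, then by symmetry the first, and iterating), with $X$ assumed irreducible. If the dimension is preserved, the codimension drops by one and $\varbar$ drops by at most one, which is your easy case. If the dimension drops, then the generic fibers of $\pi_{[n-1]}|_X$ are all of $\C$, so by irreducibility $X=\overline{Y}\times\C$; hence every $y$ lifts to $x=(y,a)$ with $a$ completely free, and one can choose the sign of $\im(a)$ so that $\varbar(\im(x))=\varbar(\im(y))$ while $\codim(X)=\codim(Y)$. This ``cylinder'' structure is what gives the total freedom in the appended coordinate that your all-at-once argument lacks; if you want to salvage your approach, you essentially have to rediscover this induction, proving at each single step either the codimension drop or the product structure.
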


\begin{proof}
Let $X$ be a variety of dimension $d$.
We can assume that $X$ irreducible. Consider the projection $\pi_{[n-1]}: \C^n \rightarrow \C^{n-1}$ 
onto the first $n-1$ coordinates, and let $Y$ denote the projection 
$\pi_{[n-1]}(X)$ of $X$.
The dimension of $Y$ is either $d$ or $d-1$.
If $\dim(Y) = d$, then for any point $y = \pi_{[n-1]}(x) \in Y$,
\[
\varbar({\rm Im}(y))  \ = \  \varbar(\pi_{[n-1]}({\rm Im}(x))) 
\ \geq \ \varbar({\rm Im}(x)) - 1 
\  \geq \ \codim(X) - 1 
\ = \ \codim(Y),
\]
which shows that $Y$ is positively hyperbolic. 

If $\dim(Y) = d-1$, then 
for any $y\in Y$ and any $a \in \C$, 
the point $(y,a)$ belongs to $X$.  In particular, for any $y\in Y$ there is a point $x\in X$ 
with $\pi_{[n-1]}(x) = y$ and $\varbar({\rm Im}(x)) = \varbar({\rm Im}(y))$.  Then 
\[
\varbar({\rm Im}(y))  \ = \  \varbar({\rm Im}(x)) 
\  \geq \ \codim(X)  
\ = \ \codim(Y),
\]
giving again that $Y$ is positively hyperbolic. 

A similar argument shows that the projection onto the last $n-1$ coordinates also preserves positive hyperbolicity.  By iterating these projections, we conclude that the projection onto any set of consecutive coordinates is positively hyperbolic.
\end{proof}

\begin{lemma}[Coordinate projections with sign changes]\label{lem:projectSign}
Suppose $X \subset \C^n$ is a positively hyperbolic variety.  For any
subset $S \subset [n]$, the projection $\pi_S(X)$ is also positively
hyperbolic after possibly changing signs of some coordinates.
\end{lemma}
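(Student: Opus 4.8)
The plan is to reduce the statement to the case of a single coordinate projection $\pi_{[n]\setminus\{k\}}$, and then use sign-change operators from Corollary~\ref{cor:simplePreservers} to ``rotate'' that deleted coordinate to one of the two ends, where Lemma~\ref{lem:stableProj} applies. Concretely, I would first observe that projecting away a set $S^c = [n]\setminus S$ can be done one index at a time, so it suffices to show: if $X\subset\C^n$ is positively hyperbolic, then for any single index $k$, the projection $\pi_{[n]\setminus\{k\}}(X)$ is positively hyperbolic after possibly changing signs of some of its coordinates. Iterating this (and relabelling coordinates after each step, which is harmless since we are allowed arbitrary sign changes and the consecutive-projection lemma is stated for the two ends) yields the general claim.

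For the single-index case, I would use the cyclic shift operator ${\rm cyc}_c$ from Corollary~\ref{cor:simplePreservers}, which preserves positive hyperbolicity of codimension-$c$ varieties and moves coordinate indices cyclically (with a sign on the wrapped-around coordinate). Applying an appropriate power of ${\rm cyc}_c$ to $X$, we obtain a positively hyperbolic variety $X'$ in which the coordinate that was originally indexed $k$ now sits in position $n$ (up to a global sign, which does not affect positive hyperbolicity by the Negation item of Corollary~\ref{cor:simplePreservers}), and all other coordinates appear in the same cyclic order, each possibly negated. Then $\pi_{[n-1]}(X')$ is positively hyperbolic by Lemma~\ref{lem:stableProj}. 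Undoing the cyclic shift amounts to a signed relabelling of the remaining $n-1$ coordinates, so $\pi_{[n]\setminus\{k\}}(X)$ equals $\pi_{[n-1]}(X')$ up to a signed coordinate permutation; composing with the sign changes (which we are allowed) gives that $\pi_{[n]\setminus\{k\}}(X)$ is positively hyperbolic after a suitable sign change on its coordinates.

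There is one bookkeeping subtlety: when we iterate to remove several indices, the codimension $c$ of the projected variety may drop (this is exactly the dichotomy $\dim(Y)=d$ or $d-1$ handled in the proof of Lemma~\ref{lem:stableProj}), so at each stage one must apply the cyclic shift ${\rm cyc}_{c'}$ for the \emph{current} codimension $c'$, not the original $c$; but Corollary~\ref{cor:simplePreservers} and Lemma~\ref{lem:stableProj} are each stated for varieties of arbitrary codimension, so this causes no difficulty. The main obstacle — really the only non-routine point — is keeping track of which coordinates get negated under repeated applications of ${\rm cyc}_{c'}$ and confirming that the cumulative sign pattern can always be absorbed into the allowed ``possibly changing signs of some coordinates'' in the conclusion; since the allowed operation is an arbitrary element of $\{\pm1\}^{|S|}$ acting coordinatewise, and each ${\rm cyc}_{c'}$ contributes only such diagonal sign factors (plus a permutation we are undoing), this absorption is automatic. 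I would also remark that one could instead argue directly with Proposition~\ref{prop:signs}: it suffices to exhibit, for the projected point $\pi_S(\im(x))$, a sign change making $\varbar$ at least the codimension of $\pi_S(X)$, and Lemma~\ref{lem:sign} (applied to the complementary coordinates that were deleted) supplies exactly such a sign choice — but the operator-based argument above is cleaner and reuses machinery already in place.
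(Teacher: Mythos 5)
Your proposal is correct and follows essentially the same route as the paper's own (very terse) proof: perform a suitable cyclic shift via Corollary~\ref{cor:simplePreservers}, project onto the first $n-1$ coordinates using Lemma~\ref{lem:stableProj}, and iterate, absorbing the accumulated signs into the allowed sign changes. Your extra remarks on using ${\rm cyc}_{c'}$ for the current codimension and on the sign bookkeeping are sensible elaborations of details the paper leaves implicit.
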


\begin{proof}
We can project out any one coordinate after preforming a suitable cyclic shift, followed by a projection onto the first $n-1$ coordinates. The statement then follows from the previous two lemmas by doing this repeatedly.
\end{proof}
 
  \begin{lemma}[Embedding in a coordinate subspace]
Suppose $X\subset \C^n$ is a variety contained in $\C^S = \{x \in \C^n : x_i = 0,  i\in [n]\backslash S\}$
for some consecutive subset $S \subset [n]$. Then $X$ is positively hyperbolic if and only if $\pi_S(X)$ is positively hyperbolic. 
 \end{lemma}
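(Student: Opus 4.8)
The plan is to reduce the statement about $X \subset \C^S \subset \C^n$ to a statement about $\pi_S(X) \subset \C^S \cong \C^{|S|}$, using the fact that $S$ is a consecutive subset so that the sign-variation functional $\varbar$ behaves predictably under inserting and deleting zero coordinates. Let me set $s = |S|$ and write $S = \{a, a+1, \dots, a+s-1\}$. If $Y = \pi_S(X)$ has codimension $c'$ in $\C^s$, then $X = \C^{[n] \setminus S} \times (\text{something})$ — more precisely $X$ is the preimage of $Y$ under the coordinate projection $\C^n \to \C^S$ restricted to $\C^S$, but sitting inside the coordinate subspace $\C^S$. So $X$ has dimension equal to $\dim Y$ and codimension $c = n - \dim Y = (n - s) + c'$ inside $\C^n$, whereas $Y$ has codimension $c'$ inside $\C^s$.

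First I would record the key observation: for any point $x \in X \subset \C^S$, the imaginary part $\im(x)$ has all coordinates outside $S$ equal to zero, and since $S$ is a set of \emph{consecutive} indices, $\varbar(\im(x))$ equals $\varbar$ of the length-$s$ subvector $\im(\pi_S(x))$ plus exactly $(n-s)$ — the $a-1$ leading zeros and the $n - a - s + 1$ trailing zeros can each be assigned signs to create a new sign change at every step, since a block of consecutive zeros at the start or end of the sequence contributes its full length in potential sign changes to $\varbar$. (A block of $\ell$ zeros at the beginning contributes $\ell$ to $\varbar$: assign them alternating signs ending opposite to the first nonzero entry. Similarly at the end. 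Interior blocks would be trickier, but here all extra zero coordinates are exterior because $S$ is consecutive.) Thus $\varbar(\im(x)) = \varbar(\im(\pi_S(x))) + (n - s)$ for every $x \in X$.

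Then the equivalence follows by unwinding Proposition~\ref{prop:signs}. By that proposition, $X$ is positively hyperbolic iff $\varbar(\im(x)) \geq c = c' + (n-s)$ for all $x \in X$, which by the displayed identity is equivalent to $\varbar(\im(\pi_S(x))) \geq c'$ for all $x \in X$, i.e. $\varbar(\im(y)) \geq c'$ for all $y \in Y = \pi_S(X)$, which is exactly the statement that $\pi_S(X)$ is positively hyperbolic (as a codimension-$c'$ subvariety of $\C^s$). One should be slightly careful that Proposition~\ref{prop:signs} is stated for equidimensional varieties; since we may assume $X$ irreducible (the positive-hyperbolicity condition is checked componentwise, as noted after the definition), $Y$ is irreducible as well, so equidimensionality is automatic.

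The main obstacle — really the only subtlety — is the bookkeeping around the claim that prepending and appending blocks of zeros to a real vector increases $\varbar$ by exactly the total number of added zeros. This is where consecutiveness of $S$ is essential: if $S$ were not consecutive, the extra zero coordinates would sit between nonzero coordinates of $\pi_S(x)$, and a block of $\ell$ interior zeros between two nonzero entries of opposite sign contributes only $\ell - 1$ (not $\ell$) extra potential sign changes when those two entries already differ in sign, so the clean additive identity would fail. I would state and verify this elementary fact about $\varbar$ as a short lemma or inline observation, and the rest is a direct translation through Proposition~\ref{prop:signs}.
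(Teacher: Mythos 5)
Your proof is correct and follows essentially the same route as the paper: both arguments rest on the sign-variation criterion of Proposition~\ref{prop:signs} together with the fact that exterior zero coordinates raise $\varbar$ by exactly their number (the paper does this one coordinate at a time via $\varbar(y,0)=\varbar(y)+1$ and induction, citing Lemma~\ref{lem:stableProj} for the ``only if'' direction). Your exact additivity identity $\varbar(\im(x))=\varbar(\im(\pi_S(x)))+(n-|S|)$, valid because $S$ is consecutive, simply packages both directions into a single computation.
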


 \begin{proof}
The ``only if'' direction follows from Lemma~\ref{lem:stableProj}.
For the ``if'' direction, it suffices to show this for $S = [n-1]$. The general case follows by induction. 
Suppose $X \subset \C^{[n-1]}$ and $Y = \pi_{[n-1]}(X)$ is positively hyperbolic. Any point $x\in X$ has the form $(y,0)$ for some $y\in Y$,
and $\varbar(x) = \varbar(y)+1$.  Moreover, $\codim(X) = \codim(Y) + 1$.  Together, this gives 
\[\varbar(x) \ = \ \varbar(y) +1 \ \geq \ \codim(Y) + 1 = \codim(X),\]
and so $X$ is positively hyperbolic as well. 
\end{proof}

As in Lemma~\ref{lem:projectSign}, an analogous statement holds for
any (not necessarily consecutive) subset $S \subset [n]$, after possibly
changing signs of some coordinates.

\begin{lemma}[Product]
\label{lem:product}
If varieties $X_1\subset \C^{n_1}$ and $X_2\subset \C^{n_2}$ are both positively hyperbolic, then so is their product $X_1\times X_2 = \{(x_1,x_2) : x_1\in X_1, x_2\in X_2\} \subset \C^{n_1+n_2}$.
\end{lemma}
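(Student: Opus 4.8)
The plan is to deduce the result directly from the sign-variation characterization of positive hyperbolicity in Proposition~\ref{prop:signs}. Since positive hyperbolicity is only defined for equidimensional varieties, $X_1$ is equidimensional of some codimension $c_1 \le n_1-1$ and $X_2$ is equidimensional of some codimension $c_2 \le n_2-1$; hence $X_1 \times X_2$ is equidimensional of codimension $c := c_1+c_2$, and $c \le (n_1+n_2)-2 \le (n_1+n_2)-1$, so the definition applies to it. By Proposition~\ref{prop:signs} it then suffices to show that $\overline{\var}(\im(z)) \ge c_1+c_2$ for every $z=(x_1,x_2)\in X_1\times X_2$, given that $\overline{\var}(\im(x_1))\ge c_1$ and $\overline{\var}(\im(x_2))\ge c_2$.

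The main step is the following elementary superadditivity property of $\overline{\var}$ under concatenation of vectors: for any $v\in\R^{n_1}$ and $w\in\R^{n_2}$, writing $(v,w)\in\R^{n_1+n_2}$ for the concatenation, one has $\overline{\var}(v,w)\ge\overline{\var}(v)+\overline{\var}(w)$. To prove this, I would recall that $\overline{\var}(v)$ is the maximum number of sign changes obtained by replacing each zero entry of $v$ by $+1$ or $-1$, and fix replacements $\tilde v$ of $v$ and $\tilde w$ of $w$ attaining $\overline{\var}(v)$ and $\overline{\var}(w)$ respectively. Then $(\tilde v,\tilde w)$ is an admissible sign assignment for the zeros of $(v,w)$, and the number of sign changes it exhibits equals the number within $\tilde v$ plus the number within $\tilde w$ plus possibly one additional change across the junction between positions $n_1$ and $n_1+1$; in particular it is at least $\overline{\var}(v)+\overline{\var}(w)$. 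Taking the maximum over all sign assignments of the zeros of $(v,w)$ yields the claimed inequality.

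Applying this with $v=\im(x_1)$, $w=\im(x_2)$, and using $\im(z)=(\im(x_1),\im(x_2))$, we obtain
\[
\overline{\var}(\im(z)) \;\ge\; \overline{\var}(\im(x_1))+\overline{\var}(\im(x_2)) \;\ge\; c_1+c_2 \;=\; \codim(X_1\times X_2),
\]
so $X_1\times X_2$ is positively hyperbolic by Proposition~\ref{prop:signs}. I do not expect a genuine obstacle here; the only points requiring a little care are the bookkeeping that $X_1\times X_2$ is equidimensional with codimension exactly $c_1+c_2$ (so that Proposition~\ref{prop:signs} applies verbatim and the bound $c\le n_1+n_2-1$ holds), and the observation that in the concatenation inequality one may choose the sign patterns on the two blocks independently — which is legitimate precisely because the definition of $\overline{\var}$ imposes no constraint relating different coordinates.
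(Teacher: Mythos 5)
Your proof is correct and follows essentially the same route as the paper's: both reduce to the sign-variation criterion of Proposition~\ref{prop:signs}, note that codimensions add for products, and use the superadditivity $\overline{\var}(v,w)\geq\overline{\var}(v)+\overline{\var}(w)$ under concatenation (which the paper uses implicitly and you spell out correctly).
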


\begin{proof}
Let $c_i$ be the codimension of $X_i$ in $\C^{n_i}$. Then the variety $X_1\times X_2$ has codimension $c_1+c_2$ in $\C^{n_1+n_2}$.
For any point $(x_1,x_2)\in X_1\times X_2$, $\varbar(\im(x_1))\geq c_1$ and $\varbar(\im(x_2))\geq c_2$, so 
 $\varbar(\im(x_1,x_2)) \geq c_1+c_2$. 
\end{proof}

\subsection{Tangent spaces and algebraic matroids}
\label{sec:algebraicmatroids}
We now show that taking tangent spaces of positively hyperbolic varieties
results in positively hyperbolic linear subspaces.

\begin{proposition}\label{prop:tangent}
Let $X\subset \P^{n-1}(\C)$ be an irreducible projective variety defined over $\R$. 
If $X$ is  hyperbolic with respect to a linear subspace $L$, 
then so is the tangent space $T_pX$ of $X$ at any real smooth point $p\in X(\R)$.
\end{proposition}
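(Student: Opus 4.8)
The plan is to reduce the claim about the tangent space $T_pX$ to the hyperbolicity of $X$ itself by a limiting argument, exploiting the characterization of hyperbolicity from \cite[Prop.~1.3]{KV} that was already used repeatedly above: $\ol{X}$ is hyperbolic with respect to $L$ if and only if there is no $b \in L(\R)\setminus\{0\}$ with $[a+ib] \in \ol{X}$ for some $a \in \R^{n}$. Since $T_pX$ is a linear subspace defined over $\R$ of the same codimension $c$ as $X$, it suffices (by the same criterion applied to the linear variety $T_pX$) to show: if some $b \in L(\R)\setminus\{0\}$ satisfied $a+ib \in T_pX$ for some real $a$, then we could produce a point of $X$ violating hyperbolicity of $X$ with respect to $L$.

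First I would set up coordinates: since $p \in X(\R)$ is a smooth point, near $p$ the variety $X$ is a real-analytic (indeed algebraic) manifold, and $T_pX$ is the limit of secant directions, or equivalently $X$ is locally the graph of an analytic map whose linearization at $p$ cuts out $T_pX$. The key point is that a complex tangent vector $v = a+ib \in T_pX$ arises as $v = \lim_{t\to 0} \tfrac{1}{t}(p_t - p)$ for a curve $p_t \in X(\C)$ with $p_0 = p$; more usefully, for each small real $t \neq 0$ the point $q_t := p + tv + o(t)$ lies on (a complex curve in) $X$, and one can choose such a curve since $v$ is a genuine tangent vector. Then $\im(q_t) = t\,b + o(t)$, so $\tfrac1t \im(q_t) \to b \in L(\R)\setminus\{0\}$.

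The second step is to upgrade "$\im(q_t)$ is approximately in $L$" to "$\im(q_t)$ is exactly in some positive/appropriate linear subspace," so that the hyperbolicity hypothesis on $X$ can be invoked. Here I would use the openness observation noted after Lemma~\ref{lem:Karp}: if $L \in \Gr_+(c,n)$, then the set of nonzero vectors lying in \emph{some} member of $\Gr_+(c,n)$ is open, hence contains $\tfrac1t\im(q_t) = b + o(1)$ for $t$ small, hence contains $\im(q_t)$ itself (scaling by $t$). Thus $\im(q_t) \in \tilde L(\R)\setminus\{0\}$ for some $\tilde L \in \Gr_+(c,n)$, contradicting positive hyperbolicity of $X$ — and by Proposition~\ref{prop:hyperbolic} this is the same as hyperbolicity of $X$ with respect to members of the positive Grassmannian. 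For a general $L$ (not necessarily positive) one argues instead by a direct perturbation: $\im(q_t)$ lies within $o(t)$ of the ray $\R_{>0}\, b \subset L(\R)$, and one perturbs the curve $q_t$ within $X$ (possible since $X$ is at least one-dimensional through $p$ in the relevant directions, using that $\dim T_pX = \dim X$) so that its imaginary part lands exactly on a line through a point of $L(\R)\setminus\{0\}$ inside a subspace $L' \supset L$ of dimension $c$.

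The main obstacle I anticipate is making the perturbation argument fully rigorous for arbitrary $L$: one must ensure that the nearby point of $X$ can be chosen with imaginary part lying \emph{exactly} in a $c$-dimensional real subspace containing $L$, not merely approximately, and that this subspace can be taken to properly contain $L$ (so the hyperbolicity hypothesis applies). The cleanest route is probably to prove the statement first in the positive-Grassmannian case via the openness of Lemma~\ref{lem:Karp}(2) — which is exactly the setting needed for the rest of the paper — and then observe that the general case follows by the same secant/limit construction together with the fact that being hyperbolic with respect to $L$ is a closed condition that passes to the tangent cone. I would also need to check the harmless technical point that $T_pX \cap L = \emptyset$ projectively, which follows because a point of the intersection would, via the secant limit, force $X$ to meet an arbitrarily small neighborhood of $L$, contradicting $X \cap L = \emptyset$ together with compactness of $\ol X \subset \P^n$.
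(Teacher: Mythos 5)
There is a genuine gap, and it sits exactly where the real content of the proposition lies. Since $T_pX$ is a real linear subspace of the same dimension as $X$, being hyperbolic with respect to $L$ is \emph{equivalent} to $T_pX\cap L=\emptyset$, i.e.\ to the absence of a nonzero $b\in T_pX(\R)\cap L(\R)$; so what you call a ``harmless technical point'' at the end is the entire statement to be proved, and your argument for it is not valid: a point of $T_pX\cap L$ far from $p$ is not approximated by points of $X$ (the tangent space is only a first-order approximation of $X$ \emph{near} $p$), so no contradiction with $X\cap L=\emptyset$ and compactness arises. Your main secant/limit construction has the same problem in the general case: from $v=a+ib\in T_pX$ with $b\in L(\R)\setminus\{0\}$ you only get points $q_t\in X$ with $\tfrac1t\im(q_t)\to b$, i.e.\ imaginary parts \emph{approaching} $L(\R)$. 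Since membership in the single subspace $L(\R)$ is a closed, positive-codimension condition, this never contradicts hyperbolicity with respect to $L$ alone. The openness trick via Lemma~\ref{lem:Karp}(2) (upper semicontinuity of $\varbar$) rescues the argument only when one is allowed to move to a \emph{nearby} positive subspace $\tilde L$, i.e.\ it proves the Corollary about positive hyperbolicity, not Proposition~\ref{prop:tangent}, whose hypothesis is hyperbolicity with respect to one arbitrary $L$. Your two proposed fixes for general $L$ --- perturbing $q_t$ so that $\im(q_t)$ lands exactly in some $c$-dimensional $L'\supset L$, and the assertion that ``hyperbolicity with respect to $L$ is a closed condition that passes to the tangent cone'' --- are unsubstantiated; the latter is essentially a restatement of the claim, and is not a safe general principle since the condition $X\cap L=\emptyset$ is open, so hyperbolicity need not survive degenerations (indeed a rescaling of $X$ at $p$ converges to $T_pX$ and its imaginary parts only \emph{approach} $L(\R)$, which is again inconclusive).

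The missing ingredient is the one the paper uses: hyperbolicity makes the finite projection $\pi_L\colon X\to\P(L^{\perp})$ away from $L$ a real-fibered morphism, and by the cited theorem on real-fibered morphisms \cite[Theorem 2.19]{KS1} its differential at the smooth real point $p$ is surjective, hence (by equality of dimensions) injective, which is precisely the statement $T_pX\cap L=\emptyset$. Some input of this kind --- ruling out ramification of $\pi_L$ over real points, which is where complex-conjugate fiber points would collide --- is unavoidable, and no amount of the approximation argument you set up will substitute for it. If you want to salvage your approach, state and prove the positive-Grassmannian case (which your $\varbar$-semicontinuity argument does handle, and which combined with Proposition~\ref{prop:hyperbolic} yields the Corollary), but acknowledge that the proposition for arbitrary $L$ requires the real-fibered-morphism theorem or an equivalent local argument.
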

\begin{proof}
Since $T_pX$ is a linear subspace, it is hyperbolic with respect to $L$ if and only if $T_pX\cap L = \emptyset$. 
Consider the map $\pi_L: X\rightarrow  \P(\C^n/L)\cong \P(L^{\perp})$ given by projection away from $L$. 
Since $X\cap L$ is empty and $\dim(X) = \dim( \P(L^{\perp}))$, this is a finite-to-one morphism. 
Moreover, the hyperbolicity of $X$ implies that for any real point in $\P(L^{\perp})$, the 
fiber is contained in $X(\R)$. 

By \cite[Theorem 2.19]{KS1}, for any smooth point $p$ of $X$, the differential map 
on tangent spaces $d_p\pi_L:T_pX\to  \P(L^{\perp})$ is surjective. Since this is a linear map 
of (projective) linear subspaces of the same dimension, it follows that it is also injective, meaning that $T_pX\cap L =\emptyset$. 
\end{proof}

Together with Proposition~\ref{prop:hyperbolic}, this gives the following. 

\begin{corollary}[Tangent spaces]
Let $X\subset \C^n$ be an irreducible variety defined over $\R$. 
If $X$ is positively hyperbolic, then so is the tangent space $T_pX$ of $X$ at any real smooth point $p\in X(\R)$.
\end{corollary}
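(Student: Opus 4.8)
The plan is to deduce this from Proposition~\ref{prop:tangent} together with Proposition~\ref{prop:hyperbolic}, which together translate positive hyperbolicity of an affine variety into hyperbolicity of its projective closure with respect to linear spaces of the form $\{0\}\times L$ with $L\in\Gr_+(c,n)$. First I would let $c$ be the codimension of $X$ in $\C^n$, let $\ol X\subset\P^n$ be the projective closure, and recall from Proposition~\ref{prop:hyperbolic}(b) that $X$ being positively hyperbolic means exactly that $\ol X$ is hyperbolic with respect to $\{0\}\times L$ for every $L\in\Gr_+(c,n)$. Since $X$ is irreducible and defined over $\R$, so is $\ol X$.

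Next I would fix a real smooth point $p\in X(\R)$, view it as a real smooth point of $\ol X$, and apply Proposition~\ref{prop:tangent}: for each $L\in\Gr_+(c,n)$, since $\ol X$ is hyperbolic with respect to $\{0\}\times L$, the projective tangent space $T_p\ol X$ is also hyperbolic with respect to $\{0\}\times L$. The (affine) tangent space $T_pX\subset\C^n$ is, up to the usual identification, the affine part of the projective tangent space $T_p\ol X$; because $T_pX$ is a linear subspace defined over $\R$ of codimension $c$, its projective closure is precisely $T_p\ol X$. So $\overline{T_pX}$ is hyperbolic with respect to $\{0\}\times L$ for every $L\in\Gr_+(c,n)$, and applying the converse direction of Proposition~\ref{prop:hyperbolic}(b) to the linear variety $T_pX$ yields that $T_pX$ is positively hyperbolic.

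The one point requiring a little care — and the place I expect to be the main obstacle — is the passage between the affine tangent space $T_pX\subset\C^n$ and the projective tangent space $T_p\ol X\subset\P^n$ at the point $p$, viewed in the chart $x_0\neq 0$. One must check that $p$ is still a smooth point of $\ol X$ (true since smoothness is local and $p$ lies in the affine chart where $\ol X$ agrees with $X$), that $\ol X$ is irreducible and defined over $\R$ (inherited from $X$), and that the codimension is preserved (the projective closure of an equidimensional affine variety is equidimensional of the same codimension). Granting these, the identification of tangent spaces is the standard fact that taking the projective closure commutes with taking the tangent space at a point in the affine chart, and the corollary follows.
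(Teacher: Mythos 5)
Your proposal is correct and takes essentially the same route as the paper, which obtains the corollary precisely by combining Proposition~\ref{prop:tangent} with Proposition~\ref{prop:hyperbolic}(b). The additional details you supply---that $p$ remains a smooth point of $\ol{X}$, that $\ol{X}$ inherits irreducibility, realness, and codimension, and that $T_p\ol{X}$ is the projective closure of the embedded affine tangent space (with the real point $p$ making the affine versus linear distinction irrelevant for positive hyperbolicity)---merely spell out what the paper leaves implicit.
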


The {\em algebraic matroid} of an irreducible algebraic variety $X$ in
$K^n$ is a matroid on the ground set $[n]$, in which a set $S \subset [n]$ is
independent if and only if the projection of $X$ onto the coordinates indexed by $S$ is
dominant, that is, the image has full dimension $|S|$. If the ground field
$K$ has characteristic $0$, then the algebraic matroid of $X$
coincides with that of its tangent space at a general point $p \in
X$.  The following statement follows from Corollary~\ref{cor:linearmatroid}.

\begin{corollary}
\label{cor:algmatroid}
Let $X\subset \C^n$ be an irreducible variety defined over $\R$. 
If $X$ is positively hyperbolic, then the algebraic matroid of $X$ is a positroid.
\end{corollary}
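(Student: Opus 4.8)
The plan is to reduce the statement to two facts already in hand: the Tangent Spaces corollary, which says that the tangent space $T_pX$ at any real smooth point $p\in X(\R)$ is a positively hyperbolic linear subspace, and Corollary~\ref{cor:linearmatroid}, which says that the matroid of such a linear subspace is a positroid. Combined with the characteristic-zero fact recalled just above --- that the algebraic matroid of $X$ equals the matroid of $T_pX$ for $p$ ranging over a nonempty Zariski-open subset $U\subseteq X$ --- it suffices to exhibit a single $p$ lying in $U\cap X(\R)$ that is moreover a smooth point of $X$. Since the smooth locus is another nonempty Zariski-open subset, the one thing that genuinely needs an argument is that $X(\R)$ is Zariski dense in $X$: a Zariski-dense set meets every nonempty Zariski-open subset, so we may then choose $p$ in the intersection of $U$, the smooth locus, and $X(\R)$, and conclude that the algebraic matroid of $X$, being equal to the matroid of the positively hyperbolic linear space $T_pX$, is a positroid.

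To establish density of $X(\R)$, I would pass to the projective closure $\ol X\subseteq\P^{n}$, which is irreducible of dimension $n-c$ and defined over $\R$. Since $X$ is positively hyperbolic, Proposition~\ref{prop:hyperbolic}(b) guarantees that $\ol X$ is hyperbolic with respect to $\Lambda:=\{0\}\times L$ for every $L\in\Gr_+(c,n)$; fixing one such $L$, the subspace $\Lambda\subseteq\P^n$ has projective dimension $c-1$ and is disjoint from $\ol X$. Exactly as in the proof of Proposition~\ref{prop:tangent}, projection away from $\Lambda$ then gives a finite surjective morphism $\pi_\Lambda\colon\ol X\to\P(\C^{n+1}/\Lambda)\cong\P^{n-c}$, and hyperbolicity with respect to $\Lambda$ forces the fiber of $\pi_\Lambda$ over every real point of $\P^{n-c}$ to consist entirely of real points of $\ol X$. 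The real points of $\P^{n-c}$ are Zariski dense, and the preimage of a Zariski-dense set under a finite surjective morphism between irreducible varieties of equal dimension is again Zariski dense; hence $\ol X(\R)\supseteq\pi_\Lambda^{-1}(\P^{n-c}(\R))$ is Zariski dense in $\ol X$. Since $X$ is a nonempty dense open subset of the irreducible variety $\ol X$ (the complement of the hyperplane at infinity) and that hyperplane meets $\ol X$ in a set of dimension $<n-c$, removing it from the dense set $\ol X(\R)$ leaves a set still dense in $\ol X$, and this set is precisely $X(\R)$.

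The only nontrivial ingredient in this plan is the density of $X(\R)$; the rest is a bookkeeping assembly of the Tangent Spaces corollary, Corollary~\ref{cor:linearmatroid}, and the standard identification of the algebraic matroid with a generic tangent-space matroid. One could instead try to verify the positroid axioms for the algebraic matroid directly from the projection lemmas of Section~\ref{sec:preservers} (for instance Lemma~\ref{lem:projectSign}), but routing through tangent spaces and Corollary~\ref{cor:linearmatroid} is by far the shortest path, so that is the approach I would take.
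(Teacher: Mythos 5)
Your proposal is correct and takes essentially the same route as the paper: identify the algebraic matroid with the matroid of the tangent space at a general point, apply the Tangent Spaces corollary to get positive hyperbolicity of $T_pX$ at a real smooth point, and conclude via Corollary~\ref{cor:linearmatroid}. The only difference is that the paper leaves the existence of a suitable real smooth point (i.e.\ the Zariski density of $X(\R)$) implicit, whereas you justify it carefully via Proposition~\ref{prop:hyperbolic} and the finite projection away from $\{0\}\times L$, exactly in the spirit of the proof of Proposition~\ref{prop:tangent}.
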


\section{Initial Degeneration and Tropicalization}
\label{sec:tropical}

Let $K =\C\{\!\{t\}\!\} =  \bigcup_{k \geq 0} \C((t^\frac{1}{k}))$ be the field of Puiseux series with complex coefficients and let $S = K[x_1,\dots,x_n]$.  Let $A \subset K$ be the valuation ring of $K$, consisting of those series with valuation $\geq 0$, and let $\m \subset A$ be its unique maximal ideal, consisting of those series with valuation $>0$. Note that $A/\m = \C$.
For an ideal $I \subset S$ and a weight vector $w \in \R^n$, the {\em $t$-initial ideal} $\tinit_w(I) \subset \C[x_1,\dots,x_n]$ is defined as follows.  
For every nonzero $f \in I$, let $\tilde{f} \in A[x_1,\dots,x_n]$ be defined as
$\tilde{f} = t^{\mu} f(t^{w_1}x_1, \dots, t^{w_n}x_n)$,
where $\mu \in \R$ is chosen so that 
the smallest valuation is $0$ among the coefficients of $\tilde{f}$. 
The \emph{$t$-initial form} $\tinit_w(f)$ of $f$ is defined as the image of $\tilde{f}$ modulo $\m$, which lives in $\C[x_1,\dots,x_n]$.
The $t$-initial ideal of $I$ is defined as the ideal generated by $\{\tinit_w(f) : f\in I\}$.
We can also see this as the image of the ideal $\tilde{I} = \langle \tilde{f} : f\in I\rangle$ 
modulo the ideal $\m$.
For an ideal $J \subset \C[x_1,\dots,x_n]$ and $w \in \R^n$ we have
$\init_w(J) = \tinit_w(JS)$, so this is a generalization of the usual
initial ideals.  See also~\cite{JMM},
\cite[Section~2.4]{MaclaganSturmfels}, and
\cite[Chapter~15]{Eisenbud}.\footnote{Instead of $\C\{\!\{t\}\!\}$, we
  can start with an arbitrary real closed field $R$ with a compatible
  non-trivial non-archimedean valuation, and let $K = R[\sqrt{-1}]$,
  as in~\cite{JSY} for example. And we can define $t$-initial ideals and tropicalizations as in~\cite[Section~2.4]{MaclaganSturmfels}. But we will use Puiseux series here for concreteness.}

We will now show that positive hyperbolicity is preserved under taking $t$-initial ideals.
Note that we can write any element $z\in K$ as $a+ib$ where $a,b\in \R\{\!\{ t\}\!\}$ are 
real Puiseux series, and write $\im(z) = b$. 
Real Puiseux series form a real closed field, where two series $a,b \in \R\{\!\{ t\}\!\}$ satisfy $a < b$ if and only if $b-a$ has a positive leading coefficient. 
The definitions of positivity, the positive Grassmannian, and positive hyperbolicity of varieties extend verbatim. 

\begin{proposition}
  \label{prop:initial}
If the variety of $I \subset K[x_1,\dots,x_n]$ in $K^n$ is positively hyperbolic, then for any $w \in \R^n$, the variety of $\tinit_w(I)$ in $\C^n$ is also positively hyperbolic.
\end{proposition}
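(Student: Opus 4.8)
The plan is to reduce the statement about $\tinit_w(I)$ over $\C$ to a statement about $I$ over the real closed field of real Puiseux series, by exhibiting, for any bad complex point on the variety of $\tinit_w(I)$, a corresponding bad point on the variety of $I$ with a controlled imaginary part. Concretely, let $Z$ be the variety of $\tinit_w(I)$ in $\C^n$ and suppose it is not positively hyperbolic. By Lemma~\ref{lem:stable} (applied componentwise, after noting equidimensionality is preserved under $t$-initial degeneration — the $t$-initial ideal of a codimension-$c$ ideal has codimension $c$), we may assume there is a point $\zeta \in Z$ with $\im(\zeta) \in (\R^*)^n$ lying in some $L \in \Gr_+(c,n)$; equivalently, by Proposition~\ref{prop:signs}, $\varbar(\im(\zeta)) < c$ and all coordinates of $\im(\zeta)$ are nonzero.

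The heart of the argument is a lifting step: I want to produce a point $z \in K^n$ on the variety of $I$ whose imaginary part has the same sign pattern as $\im(\zeta)$ in its leading terms, so that $\varbar(\im(z)) \le \varbar(\im(\zeta)) < c$, contradicting positive hyperbolicity of $V(I)$. The natural tool is the fundamental theorem of tropical geometry / the surjectivity of the map from points of $V(I)$ over $K$ to points of $V(\tinit_w(I))$ over $\C$: for a point $\zeta$ in the $t$-initial variety, there is $z \in V(I)$ with $\val(z_i) = w_i$ and whose ``leading coefficient'' vector (the $\C$-point obtained after rescaling by $t^{-w_i}$ and reducing mod $\m$) equals $\zeta$. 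Writing $z_j = t^{w_j}(\zeta_j + (\text{higher order}))$ with $\zeta_j = \alpha_j + i\beta_j$, $\beta_j \ne 0$, the imaginary part $\im(z_j)$ is a real Puiseux series whose leading term is $\beta_j t^{w_j}$ (using that $\beta_j \ne 0$, so there is no cancellation). Hence $\sgn(\im(z_j)) = \sgn(\beta_j) = \sgn(\im(\zeta_j))$ in the ordering on real Puiseux series. Since $\varbar$ of a vector depends only on the signs of its entries, $\varbar(\im(z)) = \varbar(\im(\zeta)) < c$, so $V(I)$ is not positively hyperbolic — a contradiction.

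The main obstacle is making the lifting step rigorous in the precise form needed: one must invoke the appropriate version of the lifting lemma for $t$-initial ideals (e.g.\ \cite[Section~2.4]{MaclaganSturmfels} or the Puiseux-series version of the fundamental theorem) to guarantee not merely that $w \in \trop(V(I))$ but that \emph{every} $\C$-point $\zeta$ of $V(\tinit_w(I))$ lifts to a $K$-point of $V(I)$ with leading coefficient vector exactly $\zeta$; this is the content of the statement that $\tinit_w$ commutes with taking varieties, together with density of such lifts. A secondary point to check is that $\beta_j \ne 0$ really forces the leading term of the Puiseux series $\im(z_j)$ to be $\beta_j t^{w_j}$ rather than something of higher valuation — this is immediate since $\im$ acts coefficientwise on $K = \R\ps[\sqrt{-1}]$ and the leading coefficient of $z_j t^{-w_j}$ is $\zeta_j$ with nonzero imaginary part. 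Finally, one should remark that the reduction via Lemma~\ref{lem:stable} to points with $\im \in (\R^*)^n$ is exactly what lets us avoid worrying about vanishing imaginary coordinates in the lift, since on such points $\varbar$ is locally constant and the sign-preservation above is clean.
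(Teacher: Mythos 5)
Your overall strategy (lift a point of the special fiber to a point of $V(I)$ and compare sign variations) is the same as the paper's, but as written there is a genuine gap in the very first reduction. You apply Lemma~\ref{lem:stable} ``componentwise'' to $V(\tinit_w(I))$ to assume the bad point $\zeta$ has $\im(\zeta)\in(\R^*)^n$. That lemma requires each irreducible component to be not contained in any coordinate hyperplane, and this routinely fails for $t$-initial degenerations: in the paper's own Example~\ref{ex:reciprocalcircuits} the variety of $\tinit_w(I)$ has components $V(x_1,x_2)$ and $V(x_3,x_4)$. On such a component every point has some imaginary coordinate identically zero, so a failure of positive hyperbolicity there can never be witnessed by a point with $\im(\zeta)\in(\R^*)^n$, and your reduction simply does not reach those points. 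The same issue resurfaces in the lifting step: the fundamental-theorem/tropical-lifting statement you invoke produces lifts only for points of $V(\tinit_w(I))\cap(\C^*)^n$; it does not lift points with zero coordinates, which are exactly the ones left unhandled. So the argument, as proposed, proves positive hyperbolicity only of the torus-meeting part of $V(\tinit_w(I))$.

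The fix is to use the stronger lifting statement that the paper relies on, and then your sign argument goes through without Lemma~\ref{lem:stable} at all. Because $\C[x_1,\dots,x_n]/\tinit_w(I) = A/\m \otimes_A A[x_1,\dots,x_n]/\tilde I$ (a flat degeneration; see Eisenbud, Theorem~15.17), \emph{every} point of $V_\C(\tinit_w(I))$, including those with zero coordinates, is the reduction modulo $\m$ of a point $z\in V_K(\tilde I)\cap A^n$, and after rescaling by $t^{w}$ (which multiplies each coordinate by a positive element of $\R\ps$ and so preserves the signs of imaginary parts) this gives a point of $V(I)$. The only refinement needed in your sign comparison is to drop the insistence on equality: when $\im(\zeta_j)\neq 0$ the lift satisfies $\sgn(\im(z_j))=\sgn(\im(\zeta_j))$ as you argue, while when $\im(\zeta_j)=0$ the sign of $\im(z_j)$ is unconstrained --- but since $\varbar$ assigns zero entries the worst-case signs, any such completion is already counted, so $\varbar(\im(z))\leq\varbar(\im(\zeta))$ in all cases. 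With that inequality, a point $\zeta$ with $\varbar(\im(\zeta))<c$ lifts to a point of $V(I)$ violating Proposition~\ref{prop:signs}, which is exactly the paper's argument run in the contrapositive. (Your remark that equidimensionality and codimension are preserved under $\tinit_w$ is correct and is also needed, as in the paper.)
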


\begin{proof}
If $V(I)$ is an equidimensional variety of codimension $c$ then so is $V(\tinit_w(I))$~\cite[Corollary 6.17]{JMM}. 
It follows from the definitions above that
  \[
\C[x_1,\dots,x_n]/{\tinit_w(I)} = A[x_1,\dots,x_n]/(\tilde{I}+\m) = A/\m \otimes_A A[x_1,\dots,x_n]/{\tilde{I}}.
  \]
See \cite[Theorem~15.17]{Eisenbud}.  This means that points in $V_\C(\tinit_w(I))$ are obtained by taking the points of $V_K(\tilde{I}) \cap A^n$ and reducing them modulo $\m$, which does not decrease the sign variation $\varbar$.  The result then follows from Proposition~\ref{prop:signs}. 
\end{proof}

For an affine variety $X \subset K^n$ defined by an ideal $I$, the \emph{tropicalization} of $X$ is 
\[
\trop(X) = \{ w \in \R^n : \tinit_w(I) \text{  does not contain a monomial}\}.
\] 
When the variety is defined over a (sub)field with trivial valuation
such as $\C$, its tropicalization is a polyhedral fan which is a
subfan of the Gr\"obner fan.  This case is called the {\em constant
  coefficient case} in tropical geometry, as the coefficients of the
defining polynomials have constant valuation.  In general the
tropicalization is a polyhedral complex which is a subcomplex of the
Gr\"obner complex of the defining ideal~\cite{MaclaganSturmfels}.
It satisfies the {\em balancing condition} where each maximal cone has weight equal to the sum of multiplicities of the monomial-free
associated primes of the corresponding $t$-initial ideal.  

Our main goal is to explore the combinatorial structure of the
tropicalizations of positively hyperbolic varieties.  When the variety
$X$ is a hypersurface defined by a polynomial $f = 0$ over $\C$, then the
tropical variety $X$ is the union of normal cones to edges of the
Newton polytope of~$f$, which is the convex hull of exponent vectors
of monomials in $f$.  In particular, we can recover the edge
directions of the Newton polytope from the tropicalization, and the
weights give the edge lengths.

\begin{example}
\label{ex:reciprocalcircuits}
Consider the variety $\mathcal{L}^{-1}$ defined in Example~\ref{ex:reciprocalPlane}. 
From the linear equation $(1-t^3)x_1 + (1-t^2)x_2+(1-t)x_3=0$ vanishing on $\mathcal{L}$, we obtain a 
polynomial 
\[f_{123} = (1-t^3)x_2x_3 + (1-t^2)x_1x_3+(1-t)x_1x_2\] 
vanishing on $\mathcal{L}^{-1}$.
The collection of ``circuit'' polynomials $\{f_C : C \in \binom{[4]}{3}\}$ form a universal Gr\"obner basis 
for the ideal of polynomials vanishing on $\mathcal{L}^{-1}$ \cite{ProudfootSpeyer}. 
Let $w = (1/2,1/2,0,-1)$.  Then the $t$-initial form of $f_{123}$ is $\tinit_w(f) = x_2x_3+x_1x_3$. 
The $t$-initial ideal of $I  = I(\mathcal{L}^{-1})$ is generated by the $t$-initial forms of the circuit polynomials:
\begin{align*}
\tinit_w(I)  & = \langle x_3(x_1+x_2),  x_4(x_1 + x_2), x_1(x_3 + x_4), x_2(x_3 + x_4)\rangle \\
& = \langle x_1 + x_2, x_3 + x_4\rangle \cap \langle x_1, x_2\rangle \cap \langle x_3, x_4\rangle.
\end{align*}
 Since $(1,-1,1,-1)\in (\C^*)^4$ belongs to the variety of $\tinit_w(I)$, this ideal cannot contain a 
 monomial, meaning that the vector $w$ belongs to the tropical variety of $\mathcal{L}^{-1}$. 
\end{example}

\subsection{Positively hyperbolic toric varieties}
\label{sec:toric}

In this section we characterize tropicalizations of positively hyperbolic toric varieties given by monomial parameterizations.

 \begin{proposition}
   \label{prop:oneBinomial}
If $f = a x^\alpha + b x^\beta \in \C[x_1, \hdots, x_n]$ is stable with $a,b\neq 0$ and 
$\alpha, \beta \in (\Z_{\geq 0})^n$, then 
$\beta - \alpha \in \{\pm e_i \pm e_j\}_{i,j\in[n]} \cup \{\pm e_i\}_{i \in[n]}$. Furthermore, if $a, b$ are real and positive then $\beta - \alpha \in \{ e_i - e_j\}_{i,j\in[n]} \cup \{\pm e_i\}_{i \in[n]}$.
 Conversely, if $\beta - \alpha \in \{\pm e_i \pm e_j\}_{i,j\in[n]} \cup \{\pm e_i\}_{i \in[n]}$, then there is a choice of $b \in \{\pm1\}$ such that $x^{\alpha} + bx^{\beta} $ is stable.
 \end{proposition}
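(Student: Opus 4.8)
The plan is to analyze stability of a binomial $f = ax^\alpha + bx^\beta$ by reducing to the univariate case through specialization. First I would dispose of the trivial directions: if $\alpha$ and $\beta$ share a common monomial factor $x^\gamma$, I can factor it out, since $x^\gamma$ is stable and a product of polynomials is stable iff each factor is (by \cite[Lemma 2.4]{wagnerSurvey} or directly from the half-plane property). So I may assume $\gcd(x^\alpha, x^\beta) = 1$, i.e. $\alpha$ and $\beta$ have disjoint supports. Writing $\alpha - \beta = \gamma^+ - \gamma^-$ with disjoint nonnegative vectors $\gamma^\pm$, the claim $\beta - \alpha \in \{\pm e_i \pm e_j\} \cup \{\pm e_i\}$ becomes: after clearing the common factor, the remaining exponents satisfy $|\alpha| \le 2$ and $|\beta| \le 2$ where $|\cdot|$ is the degree, and in fact (in the reduced situation) $\{|\alpha|,|\beta|\} \subseteq \{0,1,2\}$ with $\alpha,\beta$ supported on at most two variables total.

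The core of the argument is a specialization trick. Given stable $f = ax^\alpha + bx^\beta$ with disjoint-support exponents, I would restrict attention to the variables actually appearing, say $x_{i_1},\dots,x_{i_r}$. For any such variable $x_{i}$ appearing in, say, $x^\alpha$ with exponent $\alpha_i \ge 1$, consider setting all other variables to a fixed positive real value (stability is preserved under specialization at nonnegative reals, Lemma~\ref{lem:StablePreservers}); this leaves a stable univariate polynomial in $x_i$ of the form $c_1 x_i^{\alpha_i} + c_2 x_i^{\beta_i}$ with $c_1 c_2 \ne 0$ (choosing the specialization values generically so nothing vanishes). A stable univariate polynomial over $\C$ of this shape — a two-term polynomial $c_1 z^p + c_2 z^q$ with $p > q \ge 0$ — has all its nonzero roots on a circle; writing $z^{p-q} = -c_2/c_1$ these are $(p-q)$-th roots of a fixed complex number, equally spaced on a circle. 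For $f(tv+w)$ to be real-rooted for all real $v > 0, w$, and using that stability of the univariate restriction forces the roots into a half-plane, one shows $p - q \le 2$: if $p-q \ge 3$ the $(p-q)$-th roots of any nonzero complex number cannot all avoid an open half-plane, contradicting the half-plane property. Hence each variable appears with exponent difference at most $2$ between the two monomials; combined with disjoint supports this forces $\beta - \alpha$ to be $\pm e_i$, $\pm 2e_i$, or $\pm e_i \pm e_j$. The case $\pm 2 e_i$ is then excluded because $x_i^2 + c$ is stable only when $c$ makes its two roots real, i.e. $-c \in \R_{\ge 0}$, but then $x_i^2 + c = (x_i - \sqrt{-c})(x_i + \sqrt{-c})$ is a product of linear forms whose exponent differences are $\pm e_i$, contradicting $\gcd = 1$ unless we had already factored — more carefully, $x^{2e_i}$ vs constant cannot both be reduced, so this case simply does not arise in the reduced form, or one notes $a x_i^2 + b$ stable forces it (up to scaling) to equal $a(x_i^2 + b/a)$ with $b/a \in \R_{\le 0}$, which is genuinely stable, so I must instead rule out $2e_i$ by observing the original (unreduced) statement already allows only $\pm e_i \pm e_j$, and $2e_i = e_i + e_i$ is of this form — so actually no exclusion is needed, $\beta - \alpha = \pm 2e_i$ is permitted by $\{\pm e_i \pm e_j\}_{i,j \in [n]}$ taking $i = j$. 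I would double-check the index convention in the statement on this point.

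For the refinement when $a, b > 0$ are real: here $f$ real and stable means hyperbolic with respect to every positive direction, so $f(tv + w)$ real-rooted forces, by the univariate restriction argument above, that the two-term real univariate polynomial $c_1 z^p + c_2 z^q$ with $c_1, c_2 > 0$ has all real roots; since $z^{p-q} = -c_2/c_1 < 0$, the only real solutions occur when $p - q$ is odd and equals $1$ (if $p-q \ge 3$ odd, the real $(p-q)$-th root is unique, not enough for real-rootedness of degree $p-q$; if $p-q$ even there are no real roots at all while the polynomial has positive degree $p-q$). Thus each variable's exponent difference is exactly $0$ or $\pm 1$, and disjoint supports give $\beta - \alpha \in \{e_i - e_j\} \cup \{\pm e_i\}$. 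For the converse, given $\beta - \alpha$ of one of the allowed forms, I factor out the common monomial and must exhibit $b \in \{\pm 1\}$ making $x^{\gamma^+} + b x^{\gamma^-}$ stable: for $\gamma^+ - \gamma^- = e_i - e_j$ take $x_i + x_j$ (stable); for $2e_i$ take $x_i^2 - 1 = (x_i-1)(x_i+1)$; for $e_i + e_j$ take $x_i x_j - 1$, which is stable since if $\im(x_i), \im(x_j) > 0$ then... actually $x_ix_j$ can be real negative, so check: $x_ix_j = 1$ with both imaginary parts positive is impossible because $\arg(x_i x_j) = \arg x_i + \arg x_j \in (0, 2\pi)$ can equal $0$ only as $2\pi$, excluded — wait it can equal $\pi$, giving $x_i x_j$ real negative $\ne 1$, fine, so $x_i x_j - 1$ is stable; for $\pm e_i$ take $x_i - 1$. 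The main obstacle I expect is making the univariate reduction fully rigorous — specifically, justifying that a generic positive specialization of the other variables keeps both coefficients nonzero and that half-plane property of the multivariate $f$ transfers cleanly to the needed constraint on $p - q$ for every pair of variables simultaneously; this is where I would be most careful, likely invoking Lemma~\ref{lem:StablePreservers} (specialization) repeatedly and the standard fact that a univariate complex polynomial is stable iff all roots lie in the closed lower half-plane.
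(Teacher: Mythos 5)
Your reduction to disjoint supports and your converse examples are fine, but the core of your argument has a genuine gap: specializing all but one variable at positive reals only bounds each individual coordinate of $\beta-\alpha$, and the jump from ``each variable appears with exponent difference at most $2$'' to ``$\beta-\alpha \in \{\pm e_i\pm e_j\}\cup\{\pm e_i\}$'' is a non sequitur. What you actually need is a bound on the total degree $|\alpha|+|\beta|$ after removing the common monomial factor, and one-variable-at-a-time specializations cannot see it: for instance $x_1x_2x_3+c$ and $x_1x_2+x_3$ pass your per-variable test (every exponent difference is $1$), and in the positive-coefficient refinement $x_1x_2+1$ passes it as well --- every specialization of one variable at a positive real is a stable linear polynomial --- yet $e_1+e_2\notin\{e_i-e_j\}\cup\{\pm e_i\}$. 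So your method cannot derive either conclusion; the joint constraint among the variables is lost.

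The paper closes exactly this hole with a different specialization. After factoring out the common monomial, apply the inversion preserver $x_i\mapsto -x_i^{-1}$ to every $i\in\supp(\alpha)$ and clear denominators: $f$ is stable if and only if $a+(-1)^{|\alpha|}b\,x^{\alpha+\beta}$ is. Now diagonalize, setting every variable equal to $t$: the univariate polynomial $a+(-1)^{|\alpha|}b\,t^{|\alpha+\beta|}$ is stable, and its roots are the $|\alpha+\beta|$-th roots of a single nonzero complex number, equally spaced on a circle; they fit in a closed half-plane only if $|\alpha+\beta|\le 2$, and when $|\alpha+\beta|=2$ only if $(-1)^{|\alpha|+1}a/b$ is a positive real. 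This single computation bounds the total degree (ruling out the examples above) and simultaneously yields the sign condition that excludes the cases $e_i+e_j$ and $2e_i$ when $a,b>0$, i.e.\ the ``furthermore'' statement, which your per-variable argument also cannot reach.
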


\begin{proof} We rely heavily on the stability preservers given in Lemma~\ref{lem:StablePreservers}. 
A polynomial is stable if and only if all its factors are stable, so we can take 
$x^{\alpha}$ and $x^{\beta}$ to have no common factors (meaning at most one of 
$\alpha_i, \beta_i$ is nonzero for each $i$). 
For each $i$ with $\alpha_i>0$, we can replace $x_i$ by $-1/x_i$ and 
clear denominators.  The resulting polynomial, $a + (-1)^{|\alpha|}b x^{\beta+\alpha}$, 
is stable if and only if $f$ is stable, where $|\alpha|$ denotes the
$\ell_1$-norm of $\alpha$. 
If $f$ is stable then so is $a + (-1)^{|\alpha|}b t^{|\beta+\alpha|} \in \C[t]$, which is obtained 
from $f$ by specializing $x_i=t$ for all $i$. 
The roots of this specialization are the $|\beta+\alpha|$-th roots of 
$(-1)^{|\alpha|+1}a/b$ and all lie in the complex half-plane $\{z\in \C : \im(z)\leq 0\}$, 
implying that $|\beta +\alpha| \in \{1,2\}$. 
Furthermore, if $|\beta +\alpha|=2$ then $(-1)^{|\alpha|+1}a/b$ must be a positive real number. 
This gives three options: 
\begin{itemize}
\item $\{\alpha, \beta\} = \{ 0, e_i\}$ for some $i\in [n]$,  
\item $\{\alpha, \beta\} = \{ e_i, e_j \}$ for some $i,j \in [n]$ and $a/b\in \R_{+}$, or  
\item $\{\alpha, \beta\} = \{ 0, e_i +e_j \}$ for some $i,j \in [n]$ and $a/b\in \R_{-}$. 
\end{itemize}
The stable polynomials $1-x_i$, $x_i+x_j$,  and $1-x_ix_j$
show the existence of each case.\end{proof}

Let $A\in \Z^{d\times n}$ be a rank-$d$ matrix with columns $a_1, \hdots, a_n \in \Z^d$ and $\lambda = (\lambda_1, \dots, \lambda_n) \in (\C^*)^n$.  The corresponding 
\emph{toric variety} $X_{\lambda,A}\subset \C^n$ is parametrized by the map 
  \[(\C^*)^d \rightarrow (\C^*)^n, \quad
x \mapsto (\lambda_1x^{a_1}, \dots, \lambda_n x^{a_n}).\]

\begin{definition}\label{def:noncrossing}
We say that a family of subsets $S_1,S_2,\dots, S_d \subset [n]$ is a \emph{non-crossing partition} of a subset of $[n]$ if the subsets are disjoint and there are no $a, b, c, d$ in cyclic order such that $a, c \in S_i$ and $b, d \in S_j$ for some $i \neq j$. Equivalently, if we place the numbers $1, 2, \dots , n$ on $n$ vertices around a circle in clockwise order, and for each $S_i$ we draw a polygon on the corresponding vertices, then $S_1, \dots, S_t$ is a
non-crossing partition if and only if no two of these polygons intersect; see the example in Figure \ref{fig:noncrossing}. 
\end{definition}

\begin{theorem}%[Toric varieties]
\label{thm:toricStable}
Let $A$ be a $d \times n$ integer matrix of rank $d$.  There exists $\lambda \in (\C^*)^n$ such that the toric variety $X_{\lambda,A}\subset \C^n$
 is positively hyperbolic if and only if, up to the $\GL_d(\Q)$ action on the left, the columns of $A$ belong to $\{0\} \cup \{\pm e_i\}_{i \in [n]}$ 
and the supports of the rows of $A$ form a non-crossing partition of a subset of $[n]$.
In fact, one can always take $\lambda \in \{\pm 1\}^n$.
\end{theorem}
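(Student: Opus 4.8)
The plan is to reduce the statement about the toric variety $X_{\lambda,A}$ to a statement about the binomials that cut it out, and then apply Proposition~\ref{prop:oneBinomial} together with the linear-algebra characterization of non-crossing partitions. First I would recall that the ideal of $X_{\lambda,A}$ is generated by binomials $\lambda^{u_+}x^{v_+} - \lambda^{u_-}x^{v_-}$ indexed by the lattice of relations among the columns $a_1,\dots,a_n$ of $A$; equivalently, the Laurent monomial relations correspond to vectors $u$ in the kernel lattice $\ker_{\Z}(A) \subset \Z^n$. By Proposition~\ref{prop:initial} and Corollary~\ref{cor:simplePreservers}, positive hyperbolicity is preserved by the operations we need (initial degenerations, scalings, cyclic shifts, reversal), and by Proposition~\ref{prop:connectiontostable} positive hyperbolicity of a hypersurface over $\C$ is closely tied to stability; the key point is that every binomial in the defining ideal, being a factor-wise-stable polynomial once we clear common factors, must have its exponent difference in the set $\{\pm e_i \pm e_j\} \cup \{\pm e_i\}$ by Proposition~\ref{prop:oneBinomial}.

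For the forward direction, suppose $X_{\lambda,A}$ is positively hyperbolic. Each element of $\ker_{\Z}(A)$ gives a binomial vanishing on $X_{\lambda,A}$ (after choosing the $\lambda$-monomial coefficients), so by Proposition~\ref{prop:oneBinomial} every primitive kernel vector has support of size at most $2$ with entries in $\{0,\pm 1\}$ — i.e.\ the kernel lattice is generated by vectors of the form $e_i - e_j$, $e_i + e_j$, or $e_i$ (a single $\pm e_i$ meaning column $a_i=0$ after the $\GL_d(\Q)$ reduction, or that $a_i$ is a torsion-free generator appearing alone). The condition that a rank-$d$ integer matrix has kernel lattice spanned by such vectors is exactly equivalent, after a $\GL_d(\Q)$ change of basis on the row space, to the columns lying in $\{0\}\cup\{\pm e_i\}$ with the row supports forming a partition; I would make this explicit by noting that the kernel of $A$ is the orthogonal complement (over $\Q$) of the row span, and row span $=$ span of $0/1$ indicator vectors of a set partition precisely when the circuits of the associated matroid are the supports of the $e_i - e_j$ and $e_i + e_j$ relations. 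The non-crossing condition then comes from Theorem~\ref{thm:tropical} (or directly from the sign-variation lemmas, Lemma~\ref{lem:sign}): the lineality space of $\trop(X_{\lambda,A})$ contains the row span of $A$, and Theorem~\ref{thm:tropical} forces the supports of a spanning set of $0/\pm1$ vectors of any maximal face to be non-crossing — here the whole variety degenerates so that its tropicalization is a linear space, and the supports of the rows of $A$ must be non-crossing.

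For the converse, suppose the columns of $A$ lie in $\{0\}\cup\{\pm e_i\}$ (after $\GL_d(\Q)$) and the row supports $S_1,\dots,S_d$ form a non-crossing partition of a subset of $[n]$. Then $X_{\lambda,A}$ is, up to scaling and the toric coordinate change, a product over the blocks $S_k$ of "coordinate-subspace" toric varieties of the simplest type (each defined by binomials $x_i \pm x_j$ within one block, or by $x_i$ forcing a coordinate to a constant), together with free coordinates. Using Proposition~\ref{prop:oneBinomial} we choose $\lambda \in \{\pm 1\}^n$ block by block so that each generating binomial $x_i \pm x_j$ is stable; Lemma~\ref{lem:product} (product), the embedding-in-a-coordinate-subspace lemma, and the scaling/cyclic-shift preservers then let us assemble these pieces — and this is exactly where the non-crossing hypothesis is used, since it guarantees that after a cyclic relabelling the blocks occupy consecutive intervals, so that "product" and "consecutive coordinate embedding" apply. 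I expect the main obstacle to be the bookkeeping in this converse assembly: one must verify that non-crossing $\Leftrightarrow$ the blocks can be simultaneously arranged as nested/disjoint intervals compatible with the cyclic order, and that the chosen signs on overlapping constraints (when a block has more than two elements, the binomials $x_i \pm x_j$ are not independent) are globally consistent — this is a parity/consistency check analogous to the one appearing in the proof of the signed-permutation classification, and it is the place where Proposition~\ref{prop:oneBinomial}'s sign constraint ($a/b \in \R_+$ for $e_i-e_j$, $a/b\in\R_-$ for $e_i+e_j$) must be tracked carefully.
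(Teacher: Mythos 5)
Your overall architecture (reduce the column condition to Proposition~\ref{prop:oneBinomial}, get non-crossing from positivity, and build the converse by blocks, products, and sign choices) matches the paper, but three steps as you state them do not go through. First, in the forward direction you apply Proposition~\ref{prop:oneBinomial} to \emph{every} binomial in the toric ideal on the grounds that it must be stable; this is unjustified. Positive hyperbolicity of a codimension-$c$ variety $X$ does not make stable every hypersurface containing $X$: for instance the curve $t \mapsto (t, t^{-1}, -t^{-1})$ is positively hyperbolic of codimension $2$, yet the binomial $x_1x_3+1$ in its ideal is not stable (it vanishes at $x_1=x_3=i$). The paper's proof instead takes a kernel vector of \emph{minimal} support, uses cyclic shifts (Corollary~\ref{cor:simplePreservers}) and consecutive-coordinate projections (Lemma~\ref{lem:stableProj}) to induct on $n$ until the support is full, and only then — in the genuine hypersurface case, where positive hyperbolicity does imply stability — invokes Proposition~\ref{prop:oneBinomial}. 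Your argument could be repaired along these lines (project onto the support of a circuit), but as written the key inference is false. Second, you derive the non-crossing condition on the row supports from Theorem~\ref{thm:tropical}; this is circular, since the paper proves Theorem~\ref{thm:tropical} \emph{using} Theorem~\ref{thm:toricStable}. The paper's actual route is that the column matroid of $A$ is the algebraic matroid of $X_{\lambda,A}$, hence a positroid by Corollary~\ref{cor:algmatroid}, and the connected components of a positroid (the row supports) form a non-crossing partition by the Ardila--Rinc\'on--Williams theorem. Your parenthetical ``or directly from Lemma~\ref{lem:sign}'' is not a substitute for this.

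Third, in the converse your sufficient condition is too weak: choosing $\lambda \in \{\pm 1\}^n$ so that ``each generating binomial $x_i \pm x_j$ is stable'' does not imply positive hyperbolicity of the block variety, whose codimension exceeds $1$. For $A = (1\ 1\ 1)$ and $\lambda = (1,1,1)$ every generator $x_i - x_j$ is stable, but the curve $t \mapsto (t,t,t)$ has $\varbar(\im(x)) = 0 < 2$, so it is not positively hyperbolic; the correct choice is the alternating one, e.g.\ $t\mapsto (t,-t,t)$. What one must verify is the global count $\varbar(\im(x)) \geq n-d$ (Proposition~\ref{prop:signs}), which the paper achieves after reducing, via the block decomposition and Lemma~\ref{lem:product}, to a single row of $\pm 1$ entries and then choosing alternating signs using $\sgn(\im(z^{-1})) = -\sgn(\im(z))$ so that $\varbar(\im(x)) = n-1$ along the block. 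You correctly identify a sign-consistency issue as the main obstacle, but the condition you propose to enforce (pairwise stability of generators) would not resolve it even if the bookkeeping were completed.
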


\begin{proof}
Suppose there exists $\lambda = (\lambda_1, \dots, \lambda_n) \in (\C^*)^n$ such that 
the $X_{\lambda,A}\subset \C^n$ is positively hyperbolic.  We will show that every vector of minimal support in the kernel of $A$ belongs to $\{e_i \pm e_j\}_{i, j\in [n]} \cup \{e_i\}_{i \in[n]}$ up to scaling.
If $n = d-1$,  the variety $X = X_{\lambda,A}$ is a hypersurface, and the result follows from Proposition~\ref{prop:oneBinomial}.
This shows the claim for $n=1,2$, and we proceed by induction on $n$. 

Let $v$ be a vector of minimal support in the kernel of $A$. 
If ${\rm supp}(v) = [n]$, $X$ is a hypersurface and the result follows as before. 
So we may suppose $v_{k} = 0$ for some $k \in [n]$.  

We can reduce to the case $k=n$ as follows.  
If $k< n$, consider the variety $Y = {\rm cyc}_c^{n-k}(X)$. 
Then $Y$ is the toric variety $X_{\tilde{\lambda}, \tilde{A}}$, where 
$\tilde{\lambda} = {\rm cyc}_c^{n-k}(\lambda)$ and  
$\tilde{A} = {\rm cyc}_1^{n-k}(A)$. By Corollary~\ref{cor:simplePreservers}, $Y$ is also positively hyperbolic. 
Furthermore $\tilde{v} = {\rm cyc}_1^{n-k}(v)$ is a vector of minimal support in the kernel of $\tilde{A}$ 
with $\tilde{v}_n = 0$, and $\tilde{v}$ belongs to $\{e_i \pm e_j\}_{i, j\in [n]} \cup \{e_i\}_{i \in[n]}$ up to scaling if and only if $v$ does. 

If $k = n$, then consider the projection $\pi_S(X)$ with $S = [n-1]$. 
This is the toric variety $X_{\lambda_S, A_S}$ of the submatrix $A_S$ 
with columns $a_i$ for $i\in S$, where $\lambda_S$ is the projection of $\lambda$ onto the coordinates indexed by $S$.  
Note that $v$ is a vector of minimal support in the kernel of $A_S$, and by Lemma~\ref{lem:stableProj}, 
$X_{\lambda_S, A_S}$ is positively hyperbolic. By induction, it follows that $v$ has the desired form.

We will now prove the non-crossing condition.
After row-reducing the matrix $A$, we can assume that any column $a_i$
of $A$ is equal to $0$ or $\pm e_j$ for some $j \in [d]$. The matroid
on the columns of $A$ is equal to the algebraic matroid of the toric
variety $X_{\lambda,A}$, which is a positroid by Corollary~\ref{cor:algmatroid}.
The  connected components of this positroid are the supports of the rows of $A$, so by~\cite[Theorem~7.6]{ARW1}, they form a non-crossing partition.

For the converse, because of the non-crossing condition, we can cycle the columns and permute the rows of $A$ to get a block matrix of the form   
$A = \begin{pmatrix} A_1 & 0 \\ 0 & A_2 \end{pmatrix}$, so that $X_{\lambda, A} = X_{\lambda_1, A_1} \times X_{\lambda_2, A_2}$. 
Using Lemma \ref{lem:product} and induction, we can reduce to the case when $A$ consists of a single row of $\pm 1$s.  Given that $\sgn(\im(z)) = -\sgn(\im(z^{-1}))$, we can then choose $\lambda \in \{\pm 1\}^n$ so that $\varbar(x) = n-1$ for all $x \in X_{\lambda,A}$. For instance, for $A = \begin{pmatrix}1 & -1& -1& 1& 1\end{pmatrix}$,  the parametrization $t \mapsto (t, t^{-1}, -t^{-1}, -t, t)$ gives a positively hyperbolic toric variety.
\end{proof}

\subsection{Tropicalization  of positively hyperbolic varieties}

We now give our main result about the local structure of tropicalizations of positively hyperbolic varieties.

\begin{theorem}%[General tropicalization]
  \label{thm:tropical}
\label{thm:non-crossing}
  Let $X \subset \C^n$ be a positively hyperbolic variety.  Then the linear subspace parallel to any maximal face of $\trop(X)$ is spanned by $0/\pm 1$ vectors whose supports form a non-crossing partition of a subset of $[n]$. If, in addition, $X$ is homogeneous, then the linear subspace parallel to any maximal face is spanned by  $0/1$ vectors whose supports form a non-crossing partition of $[n]$.
\end{theorem}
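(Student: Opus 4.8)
The plan is to reduce the statement, via the structural results already established, to two ingredients: (i) the local structure of $\trop(X)$ near a generic point of a maximal face is governed by an initial degeneration, which is again positively hyperbolic by Proposition~\ref{prop:initial}; and (ii) positively hyperbolic linear spaces are controlled by Proposition~\ref{prop:linearstable} together with the non-crossing description of nonnegative linear subspaces from \cite[Theorem 7.6]{ARW1}. Concretely, fix a maximal face $\sigma$ of $\trop(X)$ and a point $w$ in the relative interior of $\sigma$. After replacing $I$ by $\tinit_w(I)$ (which keeps the variety positively hyperbolic and equidimensional of the same codimension $c$, by Proposition~\ref{prop:initial} and \cite[Corollary 6.17]{JMM}), we may assume that the tropicalization is a fan containing a cone whose lineality space is the linear space $V$ parallel to $\sigma$. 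The key point is that this lineality space $V$ is exactly the set of $u\in\R^n$ such that $t$-translating by $u$ fixes the (constant-coefficient) initial ideal; geometrically, this forces the torus with character lattice $V\cap\Z^n$ to act on the corresponding initial variety, so after a monomial change of coordinates the initial variety is a product of a torus factor in the directions of $V$ with something of complementary dimension. I would make this precise by passing to a point $w'$ in the relative interior of $\sigma$ for which $\tinit_{w'}(I)$ is prime (using that $\trop(X)$ is a subcomplex of the Gröbner complex and each maximal cone comes with monomial-free associated primes), so that we are looking at an irreducible positively hyperbolic variety invariant under the subtorus $T_V$.

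Next I would extract the linear-algebra content. An irreducible variety $Y\subset(\C^*)^n$ (or its closure in $\C^n$) invariant under the subtorus with cocharacter lattice $\Lambda=V\cap\Z^n$ has the property that its tangent space at a general point contains $V$; in fact, by Corollary after Proposition~\ref{prop:tangent} (tangent spaces of positively hyperbolic varieties are positively hyperbolic linear spaces) applied at a general real smooth point, we get a positively hyperbolic linear subspace $T_pY$ containing $V$. By Proposition~\ref{prop:linearstable}, $T_pY$ is defined over $\R$ and $(T_pY\cap\R^n)^\perp$ is a nonnegative linear subspace; hence $(T_pY\cap\R^n)^\perp$ represents a positroid, and so does any of its contractions/restrictions, and in particular so does the nonnegative linear space $W := (T_pY\cap\R^n)^\perp$. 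Now $V\subseteq T_pY$, so $W\subseteq V^\perp$, but I actually want the statement about $V$ itself: the cleanest route is to observe that $V$ is the lineality of a cone of $\trop(X)$, hence $V\subseteq\trop(X)$ is contained in the tropicalization, and since $\trop(X)\subseteq\trop(T_pX)$ for the tangent space at a general point (tropicalization of a variety is contained in that of any subvariety through a general point of the right dimension — more carefully, $V$ lies in the tropicalization of $\tinit_{w'}(I)$ which is a linear ideal), the relevant linear space to analyze is the one defining that linear initial ideal. That linear space is positively hyperbolic, so by Proposition~\ref{prop:linearstable} its real part $L\cap\R^n$ has $(L\cap\R^n)^\perp$ nonnegative, and by \cite[Theorem 7.6]{ARW1} the connected components of the positroid $M(L\cap\R^n)$ form a non-crossing partition of a subset of $[n]$ (the complement being the loops). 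The lineality space $V$ parallel to $\sigma$ is precisely the span of the indicator-type vectors supported on the connected components — more precisely, $\tinit_{w'}(I)$ being the ideal of a linear space $L$ of dimension $\dim X$ that is invariant under $T_V$, one has $V = \{u : u \text{ constant on each parallel class of } L\} $, which is spanned by $0/\pm1$ vectors whose supports are the connected components of the positroid $M(L)$; these form a non-crossing partition of a subset of $[n]$.

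For the homogeneous addendum: if $X$ is homogeneous then $(1,1,\dots,1)\in\trop(X)$ and every cone of $\trop(X)$ contains $\R(1,\dots,1)$ in its lineality, and moreover the initial ideals $\tinit_{w'}(I)$ are homogeneous, so the associated linear space $L$ contains $(1,\dots,1)$; then $(L\cap\R^n)^\perp$ is a nonnegative linear space all of whose vectors sum to zero on each coordinate block, which forces the positroid $M(L)$ to be loopless (no loops, since a loop would be a coloop of the orthogonal complement contradicting that all vectors there are supported off no single coordinate — more directly, $X$ homogeneous and positively hyperbolic not contained in a coordinate hyperplane means $\tinit_w(I)$ has no monomial and the linear space $L$ has full support). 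Loopless means the connected components partition all of $[n]$, and one can take the spanning vectors to be genuine $0/1$ indicator vectors of the blocks (rather than $0/\pm1$), because a homogeneous linear initial ideal invariant under $T_V$ and containing $(1,\dots,1)$ must have $V$ spanned by the $0/1$ block-indicators. I would spell this out by noting that the sign pattern on each block can be normalized to be constant using that $(1,\dots,1)\in L$ forces each connected component of $M(L)$ to be a ``parallel class'' where the coordinates agree up to a common scalar, and homogeneity removes the sign ambiguity.

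\textbf{Main obstacle.} The hard part is making rigorous the reduction from ``maximal face of $\trop(X)$'' to ``positively hyperbolic linear space'', i.e., identifying the lineality space $V$ of a maximal cone with the span of the connected-component indicators of a positroid. The clean statement is that for a maximal cone $\sigma$, a generic initial degeneration $\tinit_{w'}(I)$ (with $w'$ in the relative interior, chosen so the initial ideal is prime) has its variety invariant under the torus $T_V$ and is in fact — because $\sigma$ is maximal, so $\dim\sigma=\dim X$ — equal to (a finite union of translates of) such a torus; hence $\tinit_{w'}(I)$ is, up to a monomial map, the ideal of a linear subspace. Justifying that a maximal-dimensional cone forces the initial variety to be a coset of a subtorus requires either citing the standard fact that maximal cones of $\trop(X)$ correspond to zero-dimensional initial degenerations of $X$ modulo the lineality torus, or giving a dimension-count argument using that $\dim\trop(X)=\dim X$ and the balancing/structure theory — I expect this bookkeeping, and the careful choice of $w'$ making everything prime and the resulting linear space genuinely positively hyperbolic, to be where most of the real work lies; the non-crossing conclusion itself is then immediate from Proposition~\ref{prop:linearstable} and \cite[Theorem 7.6]{ARW1}.
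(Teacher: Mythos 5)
Your opening reduction (pass to $\tinit_w(I)$ for $w$ in the relative interior of a maximal face, note it stays positively hyperbolic by Proposition~\ref{prop:initial}, and argue that the subtorus $T_V$ with cocharacter space $V$ acts on the initial variety) matches the paper's first step. The gap is in what you do next: the whole content of the theorem is that the lattice $V\cap\Z^n$ is spanned by $0/\pm1$ vectors with \emph{disjoint} supports, and your argument never establishes this. You assert that $\tinit_{w'}(I)$ is (after a monomial change of coordinates) the ideal of a \emph{linear} subspace $L$ and then invoke Proposition~\ref{prop:linearstable} and \cite[Theorem 7.6]{ARW1}. Both halves fail. For a maximal cone, the initial variety is a finite union of cosets of $T_V$, i.e.\ a binomial (toric) variety such as $V(1-x_1x_2)$ or a monomial parameterization with arbitrary exponents; it is linear only in the very special situation the theorem is trying to force. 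And positive hyperbolicity is emphatically not preserved by monomial coordinate changes (it depends on the cyclic order and signs of the coordinates — even most coordinate permutations destroy it), so "up to a monomial map it is linear" cannot be combined with Proposition~\ref{prop:linearstable} or the ARW non-crossing theorem, which are statements in the original coordinates. Also, there is in general no $w'$ in the relative interior making $\tinit_{w'}(I)$ prime; the paper instead works with the irreducible components, each of which is a torus orbit closure of a point.

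Your tangent-space fallback has the same problem at a finer level. The tangent space to a coset $p\cdot T_V$ at $p$ is the Hadamard product $p\cdot V_{\C}$, not $V$ itself; and even granting a real smooth point on the initial variety (not automatic, since the cosets may be genuinely complex), Corollaries~\ref{cor:linearmatroid} and \ref{cor:algmatroid} only give that the \emph{matroid} of $V$ is a positroid. That controls connected components but not the statement to be proved: $V=\spann\{(1,2)\}\subset\R^2$ has a connected, trivially non-crossing positroid as its matroid, yet is not spanned by a $0/\pm1$ vector. The missing ingredient is exactly the paper's Proposition~\ref{prop:oneBinomial} and Theorem~\ref{thm:toricStable}: the classification of positively hyperbolic binomial/toric varieties, proved by reducing to stable binomials via cyclic shifts, consecutive coordinate projections, and induction, which forces every column of the exponent matrix to lie in $\{0\}\cup\{\pm e_i\}$ after row reduction, hence $V$ to be spanned by $0/\pm1$ vectors with disjoint supports; only then does the positroid/algebraic-matroid argument supply the non-crossing condition on those supports. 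Without this (or an equivalent substitute) the proof is incomplete, and the homogeneous addendum, which you also run through the unproved linearity claim, inherits the same gap.
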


\begin{proof}
Suppose $X = V(I)$ is a positively hyperbolic variety of dimension $d$
in $K^n$.  Let $w$ be a point in the interior of a maximal face $C$ of
$\trop(X)$.  By Proposition~\ref{prop:initial}, $V(\tinit_w(I))$ is
also positively hyperbolic. The tropicalization $\trop(V(\tinit_w(I)))$
is the linear subspace $L$ parallel to $C$, which is a linear subspace of
dimension $d$.  We claim that the corresponding $d$-dimensional
subtorus of $(\C^*)^n$ acts on $V(\tinit_w(I)) \cap (\C^*)^n$.
Suppose not.  Consider an irreducible component $X'$ of 
$V(\tinit_w(I)) \cap (\C^*)^n$, which must have dimension $d$.
 If $(\C^*)^n$ does not act on $X'$, then the orbit of $X'$ under this
 subtorus action is irreducible and strictly larger than $X'$, thus it has dimension $> d$.  On the other
 hand, the tropicalization of the orbit is equal to $\trop(X') + L$, and we have $\trop(X') + L \subset \trop(V(\tinit_w(I))) + L = L$, which  has dimension $d$, contradicting the fact that tropicalization preserves dimensions. Moreover, in $(\C^*)^n$ each orbit of the $d$-dimensional torus has dimension $d$.  It follows that $V(\tinit_w(I)) \cap (\C^*)^n$ is the torus orbit closure of finitely many points in $(\C^*)^n$.  Thus each irreducible component is generated by binomials, and the first statement follows from Theorem~\ref{thm:toricStable}.

If $X$ is homogeneous, each face of $\trop(X)$ contains $(1,1,\dots,1)$ in its lineality space.  Given that the linear subspace parallel to each face is spanned by vectors with disjoint support, it follows that those vectors must be $0/1$ vectors up to scaling, and their supports must form a non-crossing partition of $[n]$.
\end{proof}
\begin{corollary}%[Constant coefficient case]
\label{cor:constCoeff}
If $X\subset K^n$ is a positively hyperbolic variety defined over $\C$, then 
its tropicalization is a subfan of the (type $B$) hyperplane arrangement defined
by $x_i = 0$ and $x_i = \pm x_j$.  If, in addition, $X$ is homogeneous, then $\trop(X)$ is a subfan of the (type $A$/ braid) arrangement defined by $x_i=x_j$. 
\end{corollary}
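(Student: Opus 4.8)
\textbf{Proof proposal for Corollary~\ref{cor:constCoeff}.}

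The plan is to deduce the corollary directly from Theorem~\ref{thm:tropical} together with the basic structural facts about tropicalizations of varieties defined over a trivially valued subfield. First I would recall that when $X \subset K^n$ is defined over $\C$ (so the coefficients have trivial valuation), its tropicalization $\trop(X)$ is a rational polyhedral fan, and in fact a subfan of the Gröbner fan; in particular $\trop(X)$ is a union of finitely many polyhedral cones, each of which is the cone over the maximal face containing it, so every cone of $\trop(X)$ is contained in (the closure of) some maximal cone. Thus it suffices to control the maximal cones.

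Next I would apply Theorem~\ref{thm:tropical}: each maximal face $C$ of $\trop(X)$ is in fact a cone whose linear span $L$ is spanned by $0/\pm1$ vectors $v^{(1)},\dots,v^{(d)}$ with pairwise disjoint supports $S_1,\dots,S_d$ forming a non-crossing partition of a subset of $[n]$. The key observation is then purely linear-algebraic: a vector $v \in \{0,\pm1\}^n$ lies in the zero set of the linear form $x_i - x_j$ precisely when $i,j$ lie in the same support $S_\ell$ (forcing $v_i = v_j = \pm1$) or when $v_i = v_j = 0$; it lies in the zero set of $x_i + x_j$ when $i,j$ are in the same $S_\ell$ with opposite-sign coordinates or $v_i = v_j = 0$; and it lies in $\{x_i = 0\}$ when $i \notin \bigcup_\ell S_\ell$. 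Hence the span $L = \spann\{v^{(1)},\dots,v^{(d)}\}$ is cut out by a subset of the hyperplanes $\{x_i = 0\}$ and $\{x_i = \pm x_j\}$: concretely, $L$ is the common zero set of $x_i = 0$ for $i$ outside all supports, $x_i = x_j$ for $i,j$ in a common support with equal sign (and for $i,j$ both outside all supports, though those are implied), and $x_i = -x_j$ for $i,j$ in a common support with opposite sign. Since each maximal cone $C$ lies in its linear span $L$, which is an intersection of hyperplanes from the type~$B$ arrangement, $C$ is a union of relatively open faces of that arrangement, and therefore $\trop(X)$ — being the union of the maximal cones — is a subfan of the type~$B$ arrangement.

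For the homogeneous case, I would invoke the second part of Theorem~\ref{thm:tropical}: each maximal face has linear span spanned by $0/1$ vectors with disjoint supports forming a non-crossing partition of all of $[n]$. Now every support index appears, so no hyperplane $x_i = 0$ is needed, and since the spanning vectors are $0/1$ (no negative entries) only the relations $x_i = x_j$ (for $i,j$ in a common block) arise. Thus each maximal cone lies in an intersection of braid hyperplanes $x_i = x_j$, and $\trop(X)$ is a subfan of the braid arrangement, i.e.\ a subfan of the permutohedral fan. The only mild subtlety — and the one place I would be careful — is the bookkeeping step of checking that ``the linear span $L$ is an intersection of coordinate-arrangement hyperplanes'' really does imply that the \emph{cone} $C \subseteq L$ is a subcomplex of the induced arrangement-fan on $L$; this follows because $C$ is a full-dimensional cone in $L$ and the arrangement $x_i = \pm x_j$ restricted to $L$ is either trivial on $L$ or further subdivides it, but in the latter case $C$ being a cone of $\trop(X)$ (hence a Gröbner cone) is automatically a union of such chambers. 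I expect no real obstacle here; the content is entirely in Theorem~\ref{thm:tropical}, and this corollary is a clean translation of the non-crossing/$0\!-\!\pm1$ spanning condition into the language of reflection arrangements.
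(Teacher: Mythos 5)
Your reduction is exactly the one the paper intends: Corollary~\ref{cor:constCoeff} is stated with no separate argument, as an immediate consequence of Theorem~\ref{thm:tropical} together with the fact that in the constant-coefficient case $\trop(X)$ is a fan, and your linear-algebra translation (the span $L$ of a maximal cone is the flat cut out by $x_i=0$ for $i$ outside all supports and $x_i=\pm x_j$ for $i,j$ in a common block, with only $x_i=x_j$ surviving in the homogeneous case) is correct and is the whole content of that implicit proof.

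The one place where your write-up is not sound as stated is precisely the step you flag at the end: the claim that ``$C$ being a cone of $\trop(X)$ (hence a Gr\"obner cone) is automatically a union of chambers'' of the arrangement restricted to $L$. Nothing about being a Gr\"obner cone forces its facets to lie on hyperplanes of the arrangement: two adjacent maximal cones of $\trop(X)$ can span the \emph{same} flat $L$, and then their common facet need not lie in any proper sub-flat, so cone-by-cone the assertion is unjustified (the Gr\"obner structure may be strictly finer than the arrangement structure). What is true, and what the corollary should be read as asserting, is the statement at the level of supports, and the missing ingredient to prove it is the balancing condition. Fix a flat $L$ as in Theorem~\ref{thm:tropical} and let $U$ be the union of the maximal cones of $\trop(X)$ whose span is $L$. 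If a codimension-one cone $F$ of $\trop(X)$ lying in $L$ were on the topological boundary of $U$ inside $L$ and not contained in any proper sub-flat of $L$, then every maximal cone of $\trop(X)$ containing $F$ would have span containing $F$, hence equal to $L$ (any other admissible flat $L'$ would force $F\subseteq L\cap L'$, a proper sub-flat), and all such cones would lie on one side of $F$ within $L$; their weighted primitive vectors could then not sum into $\spann(F)$, contradicting balancing. Hence the boundary facets of $U$ in $L$ lie on sub-flats, $U$ is a union of closed chambers of the arrangement restricted to $L$, and $\trop(X)$, being the union of the $U$'s over the finitely many admissible flats, is a union of closed faces of the type $B$ (resp.\ braid) arrangement. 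With that replacement for your last sentence the argument is complete; everything else in your proposal is fine.
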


\begin{example}
Consider the cubic projective curve $X = \mathcal{L}^{-1}$ defined in Example~\ref{ex:reciprocalPlane}. 
The variety of the initial ideal of $X$ with respect to the vector $w = (\frac{1}{2},\frac{1}{2},0,-1)$ 
has three irreducible components,
namely, 
$V(\tinit_w(I)) = V(x_1 + x_2, x_3 + x_4) \cup V( x_1, x_2)\cup V(x_3, x_4)$. 
See also Example \ref{ex:reciprocalcircuits}.
The component that intersects the torus $(\C^*)^4$ is a toric variety, obtained as the (closure of the) image of the map $K^2 \to K^4$ 
given by $(a,b)\mapsto (a,-a,b,-b)$.  
Its tropicalization is the linear subspace parallel to the maximal cone of $\trop(X)$ containing $w$. 
This is the linear subspace spanned by the vectors $(1,1,0,0)$ and $(0,0,1,1)$, which 
are $0/1$ vectors supported on the non-crossing partition $\{1,2\}\sqcup \{3,4\}$. 

For $u = (1,1,0,-2)$, we find that 
$V(\tinit_u(I)) =  V(x_1 - x_3, x_2 + x_3) \cup V(x_1 + x_2, x_4)\cup V(x_3,x_4)$.
The linear span of the maximal cone of $\trop(X)$ containing $u$ is the tropicalization of the 
unique component not contained in the coordinate hyperplanes. 
This is the image of the map $K^2\to K^4$ given by $(a,b)\mapsto (a,-a,a,b)$. 
Its tropicalization is the linear subspace spanned by $(1,1,1,0)$ and $(0,0,0,1)$, which is supported on the 
non-crossing partition $\{1,2,3\}\sqcup \{4\}$. 
\end{example}

When the variety $X$ is a hypersurface defined by a polynomial $f =
0$, its tropicalization $\trop(X)$ is a codimension-one polyhedral
complex whose maximal faces are normal to the edges in the regular
subdivision of the Newton polytope of $f$ induced by the valuations of
the coefficients of $f$.  
In this case, the non-crossing condition of Theorem~\ref{thm:tropical}
is always satisfied, as the linear subspace parallel to each maximal face is
spanned by vectors with singleton supports, except possibly a single
$e_i \pm e_j$.  We now have the following result of
Choe, Oxley, Sokal, and Wagner~\cite{COSW}.

\begin{corollary}
The Newton polytope of a homogeneous multiaffine stable polynomial $f$ is a matroid polytope.
\end{corollary}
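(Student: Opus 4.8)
The plan is to deduce this directly from Corollary~\ref{cor:constCoeff} (equivalently from Theorem~\ref{thm:tropical}), together with the classical characterization of matroid base polytopes as exactly the polytopes whose vertices lie in $\{0,1\}^n$ and all of whose edges are parallel to a difference $e_i - e_j$ (Gelfand--Goresky--MacPherson--Serganova). Let $P$ denote the Newton polytope of $f$, i.e.\ the convex hull of the exponent vectors of the monomials occurring in $f$. Because $f$ is multiaffine, every such exponent vector lies in $\{0,1\}^n$, so all vertices of $P$ lie in $\{0,1\}^n$. Because $f$ is homogeneous, Proposition~\ref{prop:connectiontostable} shows that the hypersurface $X = V(f) \subset \C^n$ is positively hyperbolic, and since $f$ has constant coefficients $X$ is defined over the trivially valued subfield $\C \subset K$.

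The main input is the standard fact that $\trop(X)$ is the codimension-one skeleton of the normal fan of $P$: a weight $w$ lies in $\trop(X)$ precisely when $\init_w(f)$ is not a monomial, that is, when the face of $P$ selected by $w$ is not a vertex. Consequently every maximal face of $\trop(X)$ is the normal cone of a unique edge $E$ of $P$, and the linear subspace parallel to that maximal face is the orthogonal complement of the line spanned by a direction vector of $E$. Now apply Corollary~\ref{cor:constCoeff}: since $X$ is positively hyperbolic and homogeneous, $\trop(X)$ is a subfan of the braid arrangement $\{x_i = x_j\}$, whose codimension-one cones have linear spans of the form $(e_i - e_j)^\perp$. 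Hence for each edge $E$ of $P$ we get $(\R\cdot\text{direction of }E)^\perp = (e_i - e_j)^\perp$ for some $i \neq j$, so $E$ is parallel to $e_i - e_j$. Equivalently, one can run Theorem~\ref{thm:tropical} in the homogeneous case: the $(n-1)$-dimensional subspace parallel to a maximal face is spanned by the $0/1$ indicator vectors of a non-crossing partition of $[n]$; having $n-1$ parts it must consist of $n-2$ singletons and one two-element block $\{a,b\}$, whose orthogonal complement is $\R(e_a - e_b)$.

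At this point $P$ is a polytope whose vertices all lie in $\{0,1\}^n$ and all of whose edges are parallel to some $e_i - e_j$, so by the Gelfand--Goresky--MacPherson--Serganova criterion $P$ is the base polytope of a matroid on $[n]$; that is, $\operatorname{Newt}(f)$ is a matroid polytope. The degenerate cases in which $P$ is a single point or a segment are subsumed by the same argument, since then $\trop(X)$ is empty or a single hyperplane respectively, and a single $0/1$ point is the base polytope of a matroid with only loops and coloops.

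The only steps that require genuine care are the identification of $\trop(V(f))$ with the codimension-one skeleton of the normal fan of $\operatorname{Newt}(f)$---so that ``the linear subspace parallel to a maximal face of $\trop$'' translates into ``the orthogonal complement of an edge direction of $\operatorname{Newt}(f)$''---and the appeal to Corollary~\ref{cor:constCoeff}/Theorem~\ref{thm:tropical}. All of the substance is already contained in those results, and the remaining matroid-polytope conclusion is the familiar characterization of $0/1$ generalized permutohedra.
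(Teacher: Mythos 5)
Your argument is correct and follows essentially the same route as the paper: stability plus homogeneity gives positive hyperbolicity of $V(f)$ via Proposition~\ref{prop:connectiontostable}, multiaffineness makes $\operatorname{Newt}(f)$ a $0/1$ polytope, Corollary~\ref{cor:constCoeff} (through the duality between the tropical hypersurface and the normal fan) forces all edges into directions $e_i - e_j$, and the Gelfand--Goresky--MacPherson--Serganova criterion concludes. The extra detail you supply on identifying maximal faces of $\trop(V(f))$ with normal cones of edges is exactly the standard fact the paper invokes implicitly, so there is no substantive difference.
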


\begin{proof}
By Proposition \ref{prop:connectiontostable}, the hypersurface $V(f)$ is positively hyperbolic.
The Newton polytope of $f$ is a $0/1$ polytope because $f$ is multiaffine, and thus its edges are in directions $e_i - e_j$ by Corollary~\ref{cor:constCoeff}, which means it is a matroid polytope (see \cite[Theorem~4.1]{GGMS}).
\end{proof}

A polytope is called an {\em $M$-convex polyhedron} or a {\em generalized
permutohedron} if its edges are in directions $e_i - e_j$.
They are defined by inequalities of the form $\sum_{i \in I}x_i \geq
c_I$ for $I \subset [n]$ and $x_1+\cdots+x_n = c_{[n]}$ where the
$c_I$'s form a submodular set function with $c_\varnothing =
0$. If the polytope is integral, all the numbers $c_I$ can be chosen to
be integers. 

 A set $S \subset \Z^n$ is called {\em $M$-convex}
 if $S = P \cap \Z^n$ where $P$ is an integral $M$-convex
polyhedron~\cite[Theorem~4.15]{Murota}.   For a set $S \subset \Z^n$,
a function $f : S \rightarrow \R$ is {\em M-convex} if for every $w
\in (\R^n)^*$, the set
\[
 \{ x \in S \mid
  f(x) + \langle w,x \rangle \leq  f(y) + \langle w,y \rangle  \text{
    for all }  y \in S\}
\]
is an $M$-convex set~\cite[Theorem 6.43]{Murota}. 
We will now give a new proof the following result of  Br\"and\'en~\cite{Branden2}.  

\begin{theorem}
If $f$ is a homogeneous stable polynomial over a valued field, then
the valuation (tropicalization) of the coefficients of $f$
form an {\em $M$-convex function} on the support of~$f$.
Equivalently, for every $w \in \R^n$, the support of $\tinit_w(f)$ is an $M$-convex
set. 
\end{theorem}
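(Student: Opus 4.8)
The plan is to deduce this from Theorem~\ref{thm:tropical} by carefully unwinding what the theorem says about tropicalizations of hypersurfaces, together with the dictionary between tropical hypersurfaces and regular subdivisions of Newton polytopes. First I would recall that since $f$ is homogeneous and stable (over a valued field, which by Proposition~\ref{prop:connectiontostable} for homogeneous $f$ means $V(f)$ is positively hyperbolic), Theorem~\ref{thm:tropical} applies: the linear subspace parallel to any maximal face of $\trop(V(f))$ is spanned by $0/1$ vectors whose supports form a non-crossing partition of $[n]$, and moreover $(1,\dots,1)$ lies in the lineality space of every face. Since $V(f)$ is a hypersurface, $\trop(V(f))$ has pure codimension one, so each such parallel linear subspace has dimension $n-1$; a non-crossing partition of $[n]$ into parts spanning an $(n-1)$-dimensional space (modulo the all-ones vector) must have exactly $n-1$ parts, i.e.\ it is obtained by merging exactly two cyclically adjacent singletons. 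Thus the lattice of linear spaces parallel to maximal faces of $\trop(V(f))$ is extremely constrained: each is $\spann\{e_i - e_j : \text{pairs}\} \oplus \R(1,\dots,1)$-type, but in fact each maximal face is orthogonal to a primitive lattice vector of the form $e_i - e_j$.

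Next I would translate this into the language of regular subdivisions. The tropical hypersurface $\trop(V(f))$ is, as recalled in the paragraph preceding the theorem, the codimension-one skeleton of the regular subdivision $\Delta$ of $\conv(\supp f)$ induced by $w \mapsto \val(\coeff_w(f))$; the maximal faces of $\trop(V(f))$ are the inner normal cones to the edges of $\Delta$. The computation above shows every edge of $\Delta$ is parallel to $e_i - e_j$ for some $i,j$: indeed the maximal face of $\trop(V(f))$ dual to an edge $E$ of $\Delta$ spans $E^\perp$ (intersected with the ambient lattice / orthogonal to $(1,\dots,1)$ in the homogeneous case), and we just argued this is an $(n-1)$-dimensional space of the prescribed form, whose orthogonal complement inside $\{x : \sum x_i = 0\}$ is spanned by a single $e_i - e_j$. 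Hence $\Delta$ is a subdivision all of whose edges point in directions $e_i - e_j$. Then I would invoke the standard fact (Murota, or the Newton-polytope characterization of $M$-convex sets as in \cite[Theorem~4.15]{Murota}, together with \cite[Theorem~4.1]{GGMS}-style reasoning) that a polytope all of whose edges are in directions $e_i - e_j$ is a generalized permutohedron, and that the cells of $\Delta$ inherit this property — each cell of $\Delta$ is again a lattice polytope whose edges are among those of $\Delta$, hence in directions $e_i - e_j$, hence $M$-convex. For a given generic $w$, the face of $\Delta$ selected by $w$ is exactly the cell whose lattice points form $\supp(\tinit_w(f))$; for non-generic $w$ one takes the corresponding (possibly higher-dimensional) cell. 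So $\supp(\tinit_w(f))$ is an $M$-convex set for every $w$, which is precisely the statement that $\val(\coeff(f))$ is an $M$-convex function on $\supp f$ by the definition \cite[Theorem~6.43]{Murota} quoted just above.

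The main obstacle I anticipate is the bookkeeping at the two ``edges'' of the argument: first, checking carefully that the subspace parallel to a maximal face of a tropical hypersurface really is the $(n-1)$-dimensional space $E^\perp$ dual to the corresponding edge $E$ of $\Delta$ (standard, but one must be careful about lattice versus real spans and about the homogeneous reduction modulo $(1,\dots,1)$), and second, verifying that ``all edges of $\Delta$ in directions $e_i-e_j$'' genuinely forces \emph{every cell} of $\Delta$, including the cells selected by non-generic $w$, to be $M$-convex, and that these cells are exactly $\conv(\supp \tinit_w(f))$. The first point reduces to the structure of tropical hypersurfaces as in \cite[Section~3]{MaclaganSturmfels}; the second is essentially the observation that a face of a polytope has edge set contained in the edge set of the polytope, combined with the cited characterization of $M$-convexity, so neither is deep, but the statement needs all of it assembled correctly. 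I would also note at the end that the multiaffine/Newton-polytope corollary of Choe--Oxley--Sokal--Wagner is the special case where $\Delta$ is the trivial subdivision, so this theorem subsumes it.
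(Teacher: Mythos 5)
There is a genuine gap, and it sits exactly where you dismissed the difficulty as ``not deep.'' Your argument shows that each cell of the regular subdivision $\Delta$ --- equivalently, the Newton polytope $\conv(\supp(\tinit_w(f)))$ --- has all edges in directions $e_i-e_j$, i.e.\ is a generalized permutohedron. But the theorem asserts that $\supp(\tinit_w(f))$ is an $M$-convex \emph{set}, which by the definition quoted in the paper (\cite[Theorem~4.15]{Murota}) means $\supp(\tinit_w(f)) = P \cap \Z^n$ for an integral $M$-convex polyhedron $P$. This is strictly stronger than ``the convex hull of the support is a generalized permutohedron'': a priori the support could have holes, i.e.\ omit lattice points of its convex hull (compare $\{(2,0),(0,2)\}$, whose convex hull is a segment in direction $e_1-e_2$ but which is not $M$-convex). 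Your sentence ``the face of $\Delta$ selected by $w$ is exactly the cell whose lattice points form $\supp(\tinit_w(f))$'' silently assumes this lattice-point saturation, which is precisely what has to be proved; no amount of convex geometry about edge directions of $\Delta$ or of its cells can deliver it, since $M$-convexity of a finite set is not determined by its convex hull.

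This hole-free property is where the paper's proof does its real work, and it uses analytic input about stability that your route never invokes. After reducing via Proposition~\ref{prop:initial} to a homogeneous stable $g$ over $\C$ (at which point Corollary~\ref{cor:constCoeff} gives, much as in your argument, that $P=\conv(\supp(g))$ is a generalized permutohedron), the paper shows $\supp(g) = P\cap\Z^n$ by induction on $\dim P$: the one-dimensional case uses closure of stability under partial derivatives together with the classification of stable binomials (Proposition~\ref{prop:oneBinomial}) to show consecutive support points along an edge differ by exactly $e_i-e_j$ (note your argument also only controls edge \emph{directions}, not primitivity or saturation along edges); the inductive step places an interior lattice point $u$ on a segment in direction $e_2-e_3$ with endpoints on the boundary, then applies $\partial_1^{u_1}$ --- again using that differentiation preserves stability --- to push $u$ to the boundary of a smaller Newton polytope and conclude $u\in\supp(g)$. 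To repair your proposal you would have to import this entire differentiation/induction argument (or an equivalent substitute); the tropical-subdivision bookkeeping you outline handles only the ``polytope'' half of $M$-convexity.
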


\begin{proof}
 By Proposition~\ref{prop:initial}, it suffices to show that
 $\supp(g)$ is an $M$-convex set for any
 homogeneous stable polynomial $g$ with complex coefficients.  Let $P = \conv(\supp(g))$ be the Newton
 polytope of $g$.
Note that, by the homogeneous case of Corollary~\ref{cor:constCoeff}, $P$ is an $M$-convex polyhedron.

Let us give now a more self-contained proof of the previous statement.
Let $v \in \R^n$ be such that the Newton polytope of $\init_v(g)$  is
an edge of $P$.  Since $\init_v(g)$ is stable, by taking
partial derivatives we can transform $\init_v(g)$ into a stable binomial
$ax^{\alpha}+bx^{\beta}$ where the two terms involve disjoint sets of variables, and its Newton
polytope is parallel to that of $\init_v(g)$.  
As $g$ is homogeneous, so is $\init_v(g)$ and the binomial $ax^{\alpha}+bx^{\beta}$. 
Then by Proposition~\ref{prop:oneBinomial}, we must have $\alpha - \beta = e_i - e_j$ for 
some $i,j\in [n]$. 
This shows that the difference between any two consecutive terms in the support of
$\init_v(g)$ is $e_i - e_j$ for some $i \neq j$, and that
the support of a homogeneous stable polynomial is hole-free when the Newton
polytope is one-dimensional. 

It remains to show that $\supp(g) = P\cap \Z^n$.  We proceed by
induction on the dimension of~$P$.  We have already done the one-dimensional
case above. Let $u \in P \cap \Z^n$.  If $u$ is on the boundary of
$P$, then we can reduce dimension by taking initial
forms and conclude that $u \in \supp(g)$. 
We may thus assume that all points on the boundary of $P$ are in
$\supp(g)$. For any integer point $u$ in the interior of $P$,  we claim that $u + e_i -
e_j \in P$ for all $i \neq j$.  Indeed, any inequality of the form $\sum_{i
  \in I}x_i \leq c_I$ which is valid on $P$ is satisfied by
the interior point $u$ strictly, thus $\sum_{i \in I}u_i \leq c_I-1$, so $u + e_i -
e_j$ satisfies $\sum_{i
  \in I}x_i \leq c_I$.  It follows that, for every $i\neq j$, 
$u$ is in the interior of a segment in direction $e_i - e_j$ whose end
points are integer points on the boundary of $P$, which are in $\supp(g)$ as seen above.  Now
consider $\partial_1^{u_1}(g)$ where $\partial_1$ denotes $\frac{\partial}{\partial x_1}$.  
The point $u-u_1e_1$ is in the Newton polytope of $\partial_1^{u_1}(g)$, because
it is still the interior of a segment in
direction $e_2-e_3$ with endpoints in the support of
$\partial_1^{u_1}(g)$. As $u-u_1
e_1$ is on the boundary of the Newton
polytope of $\partial_1^{u_1}(g)$, we conclude that it is in the support of $\partial_1^{u_1}(g)$, and so $u \in \supp(g)$ as desired.
\end{proof}

\begin{remark}
Note that the above proof extends to any class of homogeneous 
polynomials $\mathcal{S} \subseteq K[x_1, \hdots, x_n]$ that  
\begin{enumerate}
\item is closed under taking $t$-initial forms ($f\in \mathcal{S} \Rightarrow \tinit_w(f)\in \mathcal{S}$),
\item is closed under taking derivatives ($f\in \mathcal{S} \Rightarrow \partial f/\partial x_j \in \mathcal{S}$ for $j\in [n]$), and
\item contains binomials only of the form $ax^{\alpha}+bx^{\beta}$ where $\alpha - \beta=e_i - e_j$ for some $i,j\in [n]$. 
\end{enumerate} 
In particular, this provides another method of proving that the valuations of 
the coefficients of a \emph{Lorentzian} or \emph{strongly log-concave} polynomial 
give an $M$-convex function.  See \cite[Theorem~8.7]{BH19}. 
A polynomial $f$ is called \emph{strongly log-concave} if $f$ and all of its derivatives 
$(\frac{\partial}{\partial x_1})^{\alpha_1} \cdots (\frac{\partial}{\partial x_n})^{\alpha_n}f$ 
are identically zero or log-concave on the positive orthant.  
Since these are closed under positive scaling
$f \mapsto c f(\lambda_1x_1, \hdots, \lambda_n x_n)$ for $c \in \R_{>0}$ and $\lambda \in \R_{>0}^n$, 
and under taking limits, we can see that, over the field of locally-convergent Puiseux series, 
the set of strongly-log concave polynomials is closed under taking initial forms, since 
 \[\tinit_w(f) = \lim_{t\rightarrow 0} f(t^{w_1} x_1, \dots, t^{w_n} x_n) t^a \text{ where }a = - \min\{w \cdot s \mid s \in
\supp(f)\}.\]  
One can also check directly from taking Hessians that the only 
homogeneous binomials $a x^{\alpha} + b x^{\beta}$ that are log-concave on the nonnegative orthant 
have $|\alpha - \beta|=2$.
A proof that strongly log-concave polynomials have full support in their Newton polytopes 
also appears in \cite[Corollary 4.5]{Gurvits}. 
\end{remark}

\subsection{Chow polytopes}
For a variety $X \subset \P^{n-1}$ of dimension $d-1$, we can
associate a hypersurface in the Grassmannian $\Gr(n-d,n)$ consisting
of all $(n-d-1)$-dimensional linear subspaces in $\P^{n-1}$ that intersect $X$.
The defining polynomial of this hypersurface, in 
%Pl\"ucker coordinates, 
the coordinate ring of $\Gr(n-d,n)$,
is called the {\em Chow form} of $X$~\cite{KSZ}.  
The projection of its Newton polytope under the map $\R^{\binom{[n]}{n-d}}
\rightarrow \R^n$ given by $e_I \mapsto \sum_{i \in I}e_i$  is called
the {\em Chow polytope} of $X$.  
While the Chow form of $X$ is unique only up to Pl\"ucker relations, 
its Chow polytope is well defined. 
If $X$ is a linear subspace, then the
Chow polytope of $X$ is its matroid polytope.  It follows from the definition
that the Chow form of $X$ is the product of the Chow forms of its
irreducible components, so its Chow polytope is the Minkowski sum of
those of the components.

\begin{theorem}\label{thm:Chow}
The Chow polytope of any homogeneous positively hyperbolic variety is
a generalized permutohedron.  
% Moreover, if $w$ is a point in the relative interior of a maximal face of $\trop(X)$, then the face
% $\face_w(P)$ of the Chow polytope supported by $w$ is a Minkowski sum
% of standard simplices.
\end{theorem}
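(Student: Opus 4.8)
The plan is to reduce to the tropical picture, using the well-known correspondence (see Kapranov--Sturmfels--Zelevinsky \cite{KSZ}) between the Chow polytope of a projective variety $X \subset \P^{n-1}$ and the tropicalization of $X$: the normal fan of the Chow polytope is exactly the fan whose cones record which initial degeneration $\tinit_w(I)$ one obtains as $w$ ranges over $\R^n$, and in particular $w \mapsto \trop(X)$ detects the vertices and edges of the Chow polytope. Concretely, the edges of the Chow polytope of $X$ correspond to the codimension-$1$ cones (walls) of the Gröbner fan across which the Chow polytope changes, and the edge direction is the direction in which the corresponding one-parameter subgroup moves the supporting vertex. So it suffices to show that every edge of the Chow polytope is parallel to $e_i - e_j$ for some $i \neq j$.

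First I would recall that the Chow polytope is a Minkowski sum over the irreducible components of $X$, and that positive hyperbolicity passes to irreducible components, so we may assume $X$ is irreducible. Next, I would fix an edge of the Chow polytope, realized by a generic weight vector $w$ lying in the relative interior of the corresponding wall of the normal fan, so that the recession cone / lineality structure of that wall has dimension exactly $\dim(X)$ (the edge is dual to a face of the fan of the expected dimension). The key point is then to identify the face of $\trop(X)$ associated with this wall: I would argue that a maximal face of $\trop(X)$ corresponding to this wall has linear span $L$ of dimension $\dim X = d$, and by Theorem~\ref{thm:tropical} (the homogeneous case), since $X$ is homogeneous and positively hyperbolic, $L$ is spanned by $0/1$ vectors whose supports form a non-crossing partition of $[n]$ that refines nothing — in particular $L$ contains $(1,1,\dots,1)$.

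The crucial step is to translate "the maximal cone of $\trop(X)$ has span $L$ spanned by $0/1$ vectors with pairwise disjoint support" into "the edge of the Chow polytope dual to this cone is in direction $e_i - e_j$." Here is the mechanism I expect to use: the edge of the Chow polytope corresponding to a maximal cone $C$ of $\trop(X)$ with span $L$ has edge direction orthogonal to all of $L$ except in one extra direction — precisely, the edge direction is the primitive integer vector spanning $L^{\perp} \cap (\text{span of the wall})^{\perp}$ restricted appropriately, which by the structure of $L$ (a sum of coordinate-subspace "all-ones" vectors on the blocks of a non-crossing partition with $d$ blocks) must be of the form $\mathbf{1}_{S_i} - \mathbf{1}_{S_j}$ for two blocks $S_i, S_j$ of the partition — but because we are moving along an edge of the Chow polytope, which changes by a single edge of a one-dimensional family, the relevant direction collapses to $e_i - e_j$ after accounting for the $(1,\dots,1)$-lineality. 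More carefully, one uses that the Chow polytope of a torus orbit closure (which, by the proof of Theorem~\ref{thm:tropical}, is what the monomial-free part of $\tinit_w(I)$ looks like) is itself a generalized permutohedron — indeed it is a matroid polytope dilated, and its edges are $e_i - e_j$ directions — together with the fact that the edge of the global Chow polytope at $w$ is parallel to an edge of the Chow polytope of $\tinit_w(X)$.

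I expect the main obstacle to be making precise the last correspondence: that an edge of the Chow polytope of $X$ is parallel to an edge of the Chow polytope of the initial degeneration $\tinit_w(X)$, and that one may choose $w$ generic enough (in the wall) that this initial degeneration is the torus-orbit-closure object analyzed in Theorem~\ref{thm:tropical}. One clean way to finish: by Theorem~\ref{thm:tropical} the recession fan / normal fan of the Chow polytope of $X$ is a subfan of the braid arrangement fan, hence the normal fan coarsens the permutohedral fan, which is exactly the statement that the Chow polytope is a generalized permutohedron (equivalently its edges are all in directions $e_i - e_j$). This gives perhaps the shortest route: combine Corollary~\ref{cor:constCoeff} (homogeneous case) — which says $\trop(X)$ is a subfan of the braid arrangement $x_i = x_j$ — with the fact that the normal fan of the Chow polytope is refined by, or compatible with, the Gröbner fan structure detected by $\trop(X)$, so that the Chow polytope's normal fan also coarsens the permutohedral fan, i.e. the Chow polytope is a generalized permutohedron.
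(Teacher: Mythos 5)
There is a genuine gap, and it sits exactly where the paper invokes Fink's ``orthant shooting'' theorem \cite[Theorem~5.1]{Fink}. Your argument never pins down what the walls of the normal fan of the Chow polytope actually are. For a variety of codimension $c>1$, a maximal cone of $\trop(X)$ has dimension $d=n-c$, while a wall of the Chow polytope's normal fan has dimension $n-1$; these are not the same objects, and knowing only that $\trop(X)$ sits inside the braid arrangement (Corollary~\ref{cor:constCoeff}) does not constrain the $(n-1)$-dimensional walls, which mostly consist of weight vectors $w\notin\trop(X)$ (for such $w$ the initial degeneration contains monomials and is not the torus-orbit closure analyzed in Theorem~\ref{thm:tropical}). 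So your ``shortest route'' --- braid-arrangement containment of $\trop(X)$ plus compatibility of the normal fan with the Gr\"obner fan, hence the normal fan coarsens the permutohedral fan --- is a non sequitur: the Gr\"obner fan is not contained in the braid arrangement, and nothing you have said relates the walls to $\trop(X)$. The missing input is precisely Fink's description: each wall is the Minkowski sum of a maximal cone of $\trop(X)$ with the negative of a $(c-1)$-dimensional cone of the normal fan of the simplex. With that in hand, the paper's computation is the correct mechanism: the span of a maximal cone is spanned by $0/1$ vectors with disjoint supports forming a partition of $[n]$ into $n-c$ parts (Theorem~\ref{thm:tropical}), adding $c-1$ coordinate directions refines this to a partition into $n-1$ parts, so the wall spans the hyperplane $x_i=x_j$ and the dual edge is parallel to $e_i-e_j$. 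Your alternative mechanism --- that the edge direction is $\mathbf{1}_{S_i}-\mathbf{1}_{S_j}$, which ``collapses'' to $e_i-e_j$ modulo the all-ones lineality --- is not correct: differences of block indicator vectors are not generically parallel to $e_i-e_j$, and since $L$ has codimension $c$ rather than $1$, there is no single normal direction to read off from $L$ alone.

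Your other suggested route (the face of the Chow polytope selected by $w$ is the Chow polytope of $\init_w X$, \`a la \cite{KSZ}, and the latter is a generalized permutohedron when $w$ lies in a maximal cone of $\trop(X)$) could in principle be developed, but as written it also has gaps: the face selected by such a $w$ has dimension up to $c$, not $1$; $\init_w X$ may have components inside coordinate hyperplanes whose Chow polytopes you have not analyzed; the assertion that the Chow polytope of a torus-orbit closure is a dilated matroid polytope is unjustified; and to know that \emph{every} edge of the Chow polytope arises as an edge of one of these faces you again need the orthant-shooting description of the walls. (The reductions you do get right --- passing to irreducible components and using that the Chow polytope is a Minkowski sum over components, and that Minkowski sums of generalized permutohedra are generalized permutohedra --- coincide with the paper's. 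Note also that for $c=1$ your outline essentially works, since then maximal cones of $\trop(X)$ are themselves walls; the content of the theorem, and the step you are missing, is the higher-codimension case.)
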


\begin{proof}
Let us first state Fink's ``orthant shooting'' construction of the
Chow polytope from a tropical variety~\cite[Theorem~5.1]{Fink}. Recall
that the tropical hypersurface of a polytope is the union of the
normal cones to the edges, that is, the union of all {\em walls} between
maximal cones in the normal fan.  For an irreducible homogenenous variety $X$ of
codimension $c$ in $K^n$ which intersects the torus $(K^*)^{n}$, the
tropical hypersurface of its Chow polytope is the Minkowski sum of the
constant coefficient tropical variety $\trop(X) \subset \R^n$ and the union of the
negatives of all $(c-1)$-dimensional cones in the normal fan of the standard
simplex~$\conv\{e_1,e_2,\dots,e_n\}$.  

Now, let $X$ be a homogeneous positively hyperbolic variety in $K^n$ with
defining ideal $I$ and Chow polytope $P$.  If all irreducible
components of $X$ intersect $(K^*)^n$, then by Fink's construction, the
linear span of any wall in the normal fan of $P$ is spanned by a coordinate orthant and a maximal cone of
$\trop(I)$.  By Theorem~\ref{thm:tropical}, any maximal cone
of $\trop(I)$ is spanned by $0/1$ vectors 
whose supports form a (non-crossing) partition of $[n]$ with $n-c$
parts. Then the wall is spanned by $0/1$ vectors whose supports form a
partition of $[n]$ with $n-1$ parts, so it is defined by an equation
of the form $x_i = x_j$.  It follows that the edges of the Chow polytope are in directions
$e_i - e_j$, and so it is a generalized permutohedron.

If $X$ has a component in a coordinate subspace, then we can apply the
orthant shooting construction in that subspace, and it follows from the same
argument that the Chow polytope of that component is a generalized
permutohedron living in the smaller subspace.   The Chow polytope of
$X$ is thus a Minkowski sum of generalized permutohedra, so it is itself
a generalized permutohedron. 
\end{proof}

\begin{remark}
While working over a field with a
non-trivial valuation, there is a natural regular subdivision of the Chow
polytope to which the tropical variety is dual \cite[Definition 4.11]{Fink}. For positively
hyperbolic projective varieties, every face of this Chow polytope subdivision is a generalized
permutohedron, by the same argument as above.
\end{remark}

\begin{example}\label{ex:Chow}
Following \cite[Example 4.2]{KV}, we find that the Chow form of the 
cubic curve $X = \mathcal{L}^{-1}$ defined in Example~\ref{ex:reciprocalPlane} is given by
\[\sum_{T\in \mathcal{T}} \, \prod_{k\ell\in \mathcal{T}}q_{k\ell} \, \prod_{ij\not\in T}p_{ij} \,\, \in \R\{\!\{t\}\!\}[p_{ij}]/I(\Gr(2,4)),\]
where $\mathcal{T}$ is the set of spanning trees of the complete graph on four vertices
and $q_{k\ell} = t^{4-\ell} - t^{4-k}$ is the $(k,\ell)$th Pl\"ucker coordinate of $\mathcal{L}^{\perp}$. 
The Chow polytope is the Newton polytope of the polynomial $f$
in $\R\{\!\{t\}\!\}[x_1, \hdots, x_4]$ obtained by substituting $p_{ij}\to x_ix_j$. 
In this case, the Chow polytope is just the simplex given by the convex hull 
of $(2,2,2,2)-2e_i$ for $i=1, \hdots, 4$.  The valuations of the coefficients of $f$ induce a regular subdivision of the 
Chow polytope whose faces are generalized permutohedra, and the (negatives of the) faces of the tropical variety $\trop(X)$ are dual
to some of the faces of this subdivision. See Figure~\ref{fig:Chow}.
\begin{figure}[ht]
\begin{center}
\includegraphics[height=1.4in]{PIX/RecipLine.pdf} \quad \quad \includegraphics[height=1.4in]{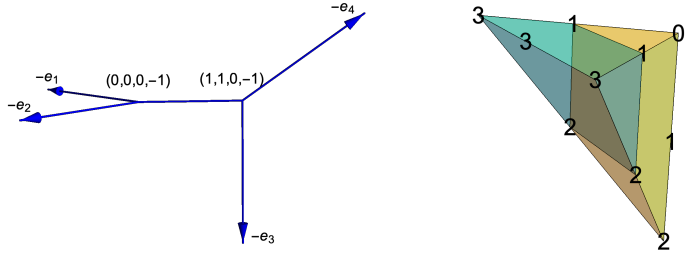} 
\end{center}
\caption{The tropical variety and subdivided Chow polytope of Example~\ref{ex:Chow}.} \label{fig:Chow}
\end{figure}

The cone $C = (1,1,0,-1) + \R_{\geq0}(0,0,0,-1) + \R(1,1,1,1)$ of $\trop(X)$ in direction $-e_4$ corresponds to the 
initial ideal $\langle x_1 - x_3, x_2 + x_3 \rangle \cap \langle x_1 + x_2, x_4 \rangle \cap \langle x_3,x_4 \rangle$.
Its Chow polytope is the Minkowski sum of the Chow polytopes of each of these 
irreducible components, namely
\[
{\rm conv}\{e_1+e_2,e_1+e_3,e_2+e_3\}  + {\rm conv}\{e_1+e_4,e_2+e_4\} +  \{e_3+e_4\}. 
\]
Since each of these simplices is a generalized permutohedron, so is their Minkowski sum.
The vertices of this trapezoid correspond to coefficients with valuations $2$ or $3$. 
Each of the edges of this trapezoid is dual to a cone of the form $C + \R_{\leq 0}\cdot e_i$ for $i=1,2,3$, obtained by ``orthant shooting''. 
The edge with valuation $3$,  ${\rm conv}\{(2,0,2,2),(0,2,2,2)\}$, is dual to $C + \R_{\leq 0}\cdot e_3$, 
and so is the parallel edge ${\rm conv}\{(2,1,1,2),(1, 2, 1,2)\}$ with constant valuation $2$. 
\end{example}

\section{Curves}\label{sec:curves}

In this section we focus on projective curves, and prove that in this case, a certain converse of Theorem \ref{thm:tropical} holds.  A (constant-coefficient, homogeneous) tropical curve is a balanced one-dimensional fan in
$\R^n/(1,1,\dots,1)$, or equivalently, a two-dimensional fan in $\R^n$
with lineality space containing $(1,1,\dots,1)$. 

For a positively hyperbolic curve $X \subset \P^{n-1}$, its
tropicalization $\trop(X)$ is a tropical curve where
 each ray is spanned by
a $0/1$ vector in which all $1$'s are cyclically
consecutive, by Theorem~\ref{thm:tropical}.
%For example, the ray $(1,0,1,0)$ is not allowed because the corresponding two-dimensional cone is spanned by $(1,0,1,0)$ and $(0,1,0,1)$ whose supports are not non-crossing. 
We will prove that all such fans can be obtained as tropicalizations of positively hyperbolic curves. 

We first demonstrate in a concrete example how to construct a positively hyperbolic curve with such a given tropicalization.

\begin{example}
  \label{ex:curve}
Consider the tropical curve $C$ in $\R^6/(1,1,\dots,1)$ whose rays are the rows the following matrix, all with multiplicity $1$:
\[
\bordermatrix{& 1 & 2&3&4&5&6 \cr
  1 & 0&1&1&1&0&0 \cr
  2 & 1&1&0&0&1&1 \cr
  3 & 0&0&1&1&1&0 \cr
  4 & 1&0&0&0&0&1 \cr
}
\]
The rows of the matrix are ordered in such a way that whenever a block of $1$'s ends in a row, the block of $1$'s in the row below begins.  
%Columns and rows are read cyclically, from left to right and from top to bottom.
By Corollary~\ref{cor:simplePreservers}, cyclically permuting variables does not change whether or not $C$ is the tropicalization of a positively hyperbolic curve in $\P^5$. We can thus cyclically permute the columns of the matrix to get the following matrix instead, where the first row contains $1$'s followed by $0$'s.
\[
\bordermatrix{& 1 & 2&3&4&5&6 \cr
  1 & 1&1&1&0&0&0 \cr
  2 & 1&0&0&1&1&1 \cr
  3 & 0&1&1&1&0&0 \cr
  4 & 0&0&0&0&1&1 \cr
}
\]

This new tropical curve is the tropicalization of the projective curve parametrized by $(u,v) \mapsto $
\[
((u-v)(u-2v), (u-v)(u-3v), (u-v)(u-3v), (u-2v)(u-3v), (u-2v)(u-4v), (u-2v)(u-4v))
\]
as in Speyer's construction~\cite{Speyer}.
Note that each coordinate corresponds to a column of the matrix above. The balancing condition says that the column sums are the same, so this parametrization is homogeneous.
The coefficients of $u$ and $v$ can be chosen arbitrarily, as long as they are consistent, to obtain the desired tropical curve. 
% But we will use this specific choice $1,2,3,\dots$  to construct a positively hyperbolic curve.

We will see that the following modified parametrization, with alternating signs, which gives the same tropical curve, is a positively hyperbolic curve: 
\[
(u,v) \mapsto (s_1s_2, -s_1s_3, s_1s_3, -s_2s_3, s_2s_4, -s_2s_4),
\]
where $s_k = u - k\, v$ for $k=1,2,3,4$. 
For this, we will show that for any point in the image we have
\[\var(\im(s_1s_2), \im(s_1s_3), \im(s_1s_3), \im(s_2s_3), \im(s_2s_4), \im(s_2s_4)) \leq 1.\] 
This implies that 
\[\varbar(\im(s_1s_2), \im(- s_1s_3), \im(s_1s_3), \im(- s_2s_3), \im(s_2s_4),  \im(- s_2s_4)) \geq 4,\]
as required.

The sequence $s_1s_2, s_1s_3,s_1s_3, s_2s_3, s_2s_4,s_2s_4$ has the following properties:  
\begin{enumerate}
\item At each step along the sequence, either the expression stays the same, or one (and only one) $s_i$ is replaced by $s_{i+1}$. (We do not replace $s_4$ by $s_1$).
  \item Each replacement $s_i \rightarrow s_{i+1}$ occurs exactly once.
  \end{enumerate}

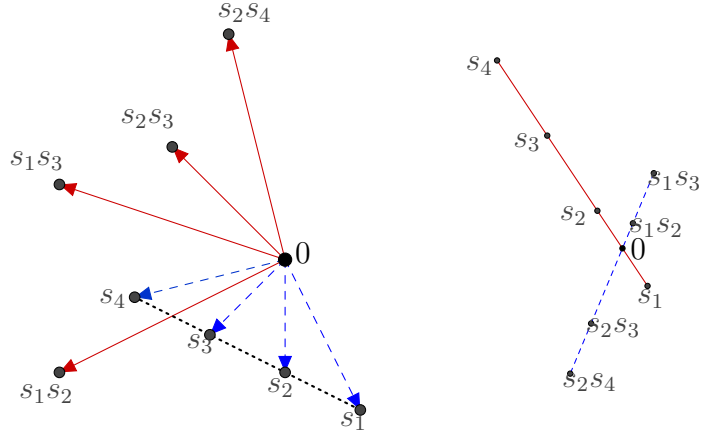
\begin{figure}

  \hspace{-3cm}
  \begin{tabular}{lr}
 
    \begin{minipage}{0.5\linewidth}
      \begin{center}
              \definecolor{qqttcc}{rgb}{0.,0.2,0.8}
\definecolor{qqqqff}{rgb}{0.,0.,1.}
\definecolor{ccqqqq}{rgb}{0.8,0.,0.}
\definecolor{uuuuuu}{rgb}{0.26666666666666666,0.26666666666666666,0.26666666666666666}

\begin{tikzpicture}[line cap=round,line join=round,>=triangle 45,x=1.0cm,y=1.0cm]
\begin{axis}[
x=1.0cm,y=1.0cm,
axis lines=middle,
xmin=-8.998979293420078,
xmax=6.377442747842604,
ymin=-3.028105818563085,
ymax=4.085833342295479,
axis line style={draw=none},
tick style={draw=none},
ticks=none
  ] 
\clip(-8.998979293420078,-3.028105818563085) rectangle (6.377442747842604,4.085833342295479);
\draw [->,line width=0.4pt,color=ccqqqq] (0.,0.) -- (-3.,-1.5);
\draw [->,line width=0.4pt,color=ccqqqq] (0.,0.) -- (-3.,1.);
\draw [->,line width=0.4pt,color=ccqqqq] (0.,0.) -- (-1.5,1.5);
\draw [->,line width=0.4pt,color=ccqqqq] (0.,0.) -- (-0.75,3.);
\draw [->,line width=0.4pt,dash pattern=on 3pt off 3pt,color=qqqqff] (0.,0.) -- (1.,-2.);
\draw [->,line width=0.4pt,dash pattern=on 3pt off 3pt,color=qqqqff] (0.,0.) -- (0.,-1.5);
\draw [->,line width=0.4pt,dash pattern=on 3pt off 3pt,color=qqqqff] (0.,0.) -- (-1.,-1.);
\draw [->,line width=0.4pt,dash pattern=on 3pt off 3pt,color=qqttcc] (0.,0.) -- (-2.,-0.5);
\draw [line width=0.8pt,dotted] (1.,-2.)-- (-2.,-0.5);
\begin{scriptsize}
\draw [fill=uuuuuu] (1.,-2.) circle (2.0pt);
\draw[color=uuuuuu] (0.9101402461283957,-2.149118278094564) node {$s_1$};
\draw [fill=uuuuuu] (0.,-1.5) circle (2.0pt);
\draw[color=uuuuuu] (-0.06260596532343549,-1.7037645909238464) node {$s_2$};
\draw [fill=uuuuuu] (-1.,-1.) circle (2.0pt);
\draw[color=uuuuuu] (-1.1291108477585756,-1.1177728972781655) node {$s_3$};
\draw [fill=uuuuuu] (-2.,-0.5) circle (2.0pt);
\draw[color=uuuuuu] (-2.2893744011770254,-0.5200613697595714) node {$s_4$};
\draw [fill=uuuuuu] (-3.,-1.5) circle (2.0pt);
\draw[color=uuuuuu] (-3.174221858582004,-1.7799435110977848) node {$s_1 s_2$};
\draw [fill=uuuuuu] (-0.75,3.) circle (2.0pt);
\draw[color=uuuuuu] (-0.5372592371764375,3.283024722000896) node {$s_2 s_4$};
\draw [fill=black] (0.,0.) circle (2.5pt);
\draw[color=black] (0.23624979843586208,0.09522990856839329) node {$0$};
\draw [fill=uuuuuu] (-3.,1.) circle (2.0pt);
\draw[color=uuuuuu] (-3.303140031184054,1.3140926313514092) node {$s_1 s_3$};
\draw [fill=uuuuuu] (-1.5,1.5) circle (2.0pt);
\draw[color=uuuuuu] (-1.8264409631969367,1.8766446572512625) node {$s_2 s_3$};
\end{scriptsize}
\end{axis}
\end{tikzpicture}
      \end{center}
    \end{minipage}
& 
    \begin{minipage}{0.5\linewidth}
      \begin{center}    
        \definecolor{qqqqff}{rgb}{0.,0.,1.}
\definecolor{ccqqqq}{rgb}{0.8,0.,0.}
\definecolor{uuuuuu}{rgb}{0.26666666666666666,0.26666666666666666,0.26666666666666666}

\begin{tikzpicture}[line cap=round,line join=round,>=triangle 45,x=1.0cm,y=1.0cm]
\begin{axis}[
x=1.0cm,y=1.0cm,
axis lines=middle,
xmin=-4.877596126956554,
xmax=6.980270615616448,
ymin=-2.3643576755934466,
ymax=3.12171329448417,
xtick={-4.5,-4.0,...,6.5},
ytick={-2.0,-1.5,...,3.0},
  axis line style={draw=none},
tick style={draw=none},
ticks=none
]
\clip(-4.877596126956554,-2.3643576755934466) rectangle (6.980270615616448,3.12171329448417);
\draw [line width=0.4pt,color=ccqqqq] (0.33333333333333337,-0.5)-- (-1.6666666666666665,2.5);
\draw [line width=0.4pt,dash pattern=on 2pt off 2pt,color=qqqqff] (0.41666666666666663,1.)-- (-0.6944444444444446,-1.6666666666666665);
\begin{scriptsize}
\draw [fill=uuuuuu] (0.33333333333333337,-0.5) circle (1.0pt);
\draw[color=uuuuuu] (0.3780056343286598,-0.6652121362613346) node {$s_1$};
\draw [fill=uuuuuu] (-0.33333333333333326,0.5) circle (1.0pt);
\draw[color=uuuuuu] (-0.6523272990961285,0.4374248626669508) node {$s_2$};
\draw [fill=uuuuuu] (-1.,1.5) circle (1.0pt);
\draw[color=uuuuuu] (-1.2669118558758268,1.4135297469641215) node {$s_3$};
\draw [fill=uuuuuu] (-1.6666666666666665,2.5) circle (1.0pt);
\draw[color=uuuuuu] (-1.8995724290313984,2.425786664013039) node {$s_4$};
\draw [fill=uuuuuu] (0.1388888888888889,0.3333333333333333) circle (1.0pt);
\draw[color=uuuuuu] (0.44579069573818536,0.2702217111901206) node {$s_1 s_2$};
\draw [fill=uuuuuu] (-0.6944444444444446,-1.6666666666666665) circle (1.0pt);
\draw[color=uuuuuu] (-0.43993410667961513,-1.781406147471525) node {$s_2 s_4$};
\draw [fill=black] (0.,0.) circle (1.0pt);
\draw[color=black] (0.21080248285183012,-0.009956542635919094) node {$0$};
\draw [fill=uuuuuu] (0.41666666666666663,1.) circle (1.0pt);
\draw[color=uuuuuu] (0.6898169168124773,0.9028822843456943) node {$s_1 s_3$};
\draw [fill=uuuuuu] (-0.41666666666666674,-1.) circle (1.0pt);
\draw[color=uuuuuu] (-0.12360382010182923,-1.0583654924365837) node {$s_2 s_3$};
\end{scriptsize}
\end{axis}
\end{tikzpicture}
        \end{center}
    \end{minipage}
\end{tabular}

  \caption{In Example~\ref{ex:curve} we consider the curve parameterized by $(u,v) \mapsto (s_1s_2, s_1s_3,s_1s_3, s_2s_3, s_2s_4,s_2s_4)$ where $s_k = u - kv$.  For any choice of $u$ and $v$, the imaginary part of $(s_1s_2, s_1s_3,s_1s_3, s_2s_3, s_2s_4,s_2s_4)$ has sign variation at most one.  The figure shows the points on a complex plane, in the two cases based on whether $u$ and $v$ have different or the same arguments.}
  
   \label{fig:angles}
   \end{figure}

Note that the numbers $s_1,s_2,s_3,s_4$ lie along a line, in that order, in the complex plane. We consider two cases, as depicted in Figure~\ref{fig:angles}.
First, suppose the line through $s_1,s_2,s_3,s_4$ does not contain $0$, so the numbers $s_1s_2, s_1s_3, s_1s_3, s_2s_3, s_2s_4, s_2s_4$ are all nonzero.  By the two properties stated above, the arguments of the complex numbers $s_1s_2, s_1s_3, s_1s_3, s_2s_3, s_2s_4, s_2s_4$ only move in one direction, and the total change is strictly less than $\pi$.  Thus their imaginary parts contain at most one $0$ and change sign at most once.

Next, assume that the line through $s_1,s_2,s_3,s_4$ contains $0$, i.e.\ $u$ and $v$ have the same argument.  Then the numbers $s_1s_2, s_1s_3, s_1s_3, s_2s_3, s_2s_4, s_2s_4$ all lie on the same line through $0$.  Since each replacement $s_i \rightarrow s_{i+1}$ is used only once and the only replacements that induce a sign change are when $s_i$ and $s_{i+1}$ are on opposite sides of $0$ (or one of them is $0$), it follows that the $0$'s occur consecutively and that the sign variation is at most~$1$.
\end{example}

\begin{theorem}
  \label{thm:curves}
A tropical curve $C$ in $\R^n/(1,1,\dots,1)$ is the tropicalization of a positively hyperbolic curve in $\P^{n-1}$ if and only if each ray of $C$ is spanned by a 0/1 vector in which the 1's appear in a cyclically consecutive block.
\end{theorem}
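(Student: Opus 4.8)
The plan is to prove the two directions separately. The forward direction is essentially already in hand: if $X \subset \P^{n-1}$ is a positively hyperbolic curve, then $\trop(X)$ is a one-dimensional fan, and by Theorem~\ref{thm:tropical} the linear span of any ray (a maximal face) is spanned by $0/1$ vectors whose supports form a non-crossing partition of $[n]$; since the span is one-dimensional, this span is a single $0/1$ vector, and a $0/1$ vector whose support is one block of a non-crossing partition of $[n]$ is exactly a vector whose support is a cyclically consecutive block. (A singleton-support vector also counts, as the empty or trivial case.) So the forward direction is just unwinding Theorem~\ref{thm:tropical} in the curve case.

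For the converse, I would generalize the construction illustrated in Example~\ref{ex:curve}. Given a tropical curve $C$ with rays $v^{(1)}, \dots, v^{(m)}$, each a $0/1$ vector with cyclically consecutive support and attached multiplicity $\mu_j$, first use Corollary~\ref{cor:simplePreservers} (cyclic permutation preserves positive hyperbolicity, and correspondingly cyclically permutes $\trop$) to reduce to the case where one of the rays has support an \emph{initial} segment $\{1, \dots, k\}$ — more importantly, to arrange the supports so that, reading the coordinates $1, 2, \dots, n$ cyclically, the blocks of $1$'s open and close in a nested/linear fashion as in Example~\ref{ex:curve} (this is possible precisely because the supports come from a non-crossing structure and the balancing condition forces the ``interval'' endpoints to match up). Then build a parametrized curve $K^2 \to K^n$, $(u,v) \mapsto$ a vector whose $i$-th coordinate is $\pm \prod_{k} s_k^{a_{ik}}$ where $s_k = u - c_k v$ for suitable distinct scalars $c_k \in K$ and exponent matrix $(a_{ik})$ chosen (as in Speyer's construction~\cite{Speyer}) so that the tropicalization is exactly $C$ with the prescribed multiplicities. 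Homogeneity of the parametrization is exactly the balancing condition on $C$. The signs $\pm$ are the crux: one assigns them so that at each ``step'' $i \to i+1$ along the coordinate sequence, where one factor $s_k$ is replaced by $s_{k+1}$, the sign flips precisely in the way that, in Example~\ref{ex:curve}, guarantees $\varbar(\im(\text{image point})) \geq c = n - 2$ for every point of the image — equivalently $\var$ of the \emph{unsigned} sequence of imaginary parts is at most $1$.

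The heart of the argument — and the step I expect to be the main obstacle — is proving that this sign choice actually forces $\var(\im(x)) \le 1$ for all $x$ in the image, i.e.\ the generalization of the two-case geometric argument in Example~\ref{ex:curve}. One uses that $s_1, \dots, s_r$ lie on a line in $\C$ in that order. In the generic case where this line misses $0$, as one walks along the coordinate sequence each coordinate is a product of two (or one) of the $s_k$'s, and replacing $s_k$ by $s_{k+1}$ rotates the argument monotonically; since each replacement $s_k \to s_{k+1}$ is used at most a controlled number of times and the total argument swing stays below $\pi$ (because all $s_k$ lie in a half-plane through $0$), the imaginary parts of the \emph{signed} coordinates change sign at most $c$ times and the unsigned ones at most once — this needs a careful bookkeeping of how many times each replacement can occur given that $C$ is a \emph{fan} (so the support multiset is determined by the rays and their multiplicities) and of how the chosen signs interact with the argument rotation. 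In the degenerate case where the line through the $s_k$ passes through $0$, everything lies on a single line through the origin, and one argues combinatorially that the zeros among the coordinates occur consecutively and exactly one genuine sign change occurs, using that each replacement $s_k \to s_{k+1}$ is used at most once across a fixed cyclic block. Once $\var(\im(x)) \le 1$ is established, Proposition~\ref{prop:signs} gives positive hyperbolicity, and the reduction steps (cyclic permutation, and Lemma~\ref{lem:product} in the case $C$ decomposes as a non-crossing join) finish the proof.
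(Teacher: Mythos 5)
Your forward direction matches the paper, and the core of your converse construction --- Speyer's parametrization with alternating signs plus the two-case argument-rotation/collinearity analysis of Example~\ref{ex:curve} --- is also exactly the paper's. But there is a genuine gap at the step you yourself flag as the main obstacle, and it is not mere bookkeeping: the bound $\var(\im(x))\le 1$ needs each replacement $s_i\to s_{i+1}$ to occur \emph{exactly once} along the coordinate sequence, because only then does the total rotation of the argument telescope to $\arg(s_{\mathrm{last}})-\arg(s_{\mathrm{first}})<\pi$. If a replacement can occur twice --- as happens if you try to realize a reducible curve, or a ray of multiplicity $\mu_j>1$ encoded by an exponent matrix, inside a single parametrization --- the total swing can exceed $\pi$ and the sign-variation bound simply fails. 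The paper's mechanism for guaranteeing ``exactly once'' is a reduction your proposal is missing: first decompose $C$ into \emph{irreducible} tropical curves (ones that are not unions of two balanced subfans), using that a union of positively hyperbolic curves of the same codimension is trivially positively hyperbolic and that tropicalization takes unions to unions; this also disposes of multiplicities, which are realized as unions of several curves rather than by repeated factors. For an irreducible $C$, balancing makes the cyclic blocks link into a single cycle $(k_1\,k_2\cdots k_m)$ with the $k_i$ distinct, and after the cyclic shift putting $k_1=1$ there is no wrap-around, so each replacement occurs exactly once and the Example~\ref{ex:curve} argument applies verbatim.

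Your proposed substitute reduction via Lemma~\ref{lem:product} (the ``non-crossing join'' case) only covers decompositions into coordinate-disjoint pieces and is insufficient. For instance, in $\R^4/(1,1,1,1)$ the union of the curve with rays $e_1+e_2,\,e_3+e_4$ and the curve with rays $e_2+e_3,\,e_1+e_4$ is balanced and satisfies the ray condition, but its four blocks cover every coordinate twice, so they cannot be arranged into a single chain of matching endpoints (your claim that balancing ``forces the interval endpoints to match up'' into one linear sequence is exactly what fails here), and the two natural pieces use overlapping coordinate sets, so no product decomposition applies; it must be handled as a union. A minor additional remark on the forward direction: a single block of a non-crossing partition need not be a cyclic interval (cf.\ Figure~\ref{fig:noncrossing}); what you actually use is that for a homogeneous curve the space parallel to a maximal face is two-dimensional, spanned by $e_S$ and $e_{[n]\setminus S}$, and a two-block partition $\{S,[n]\setminus S\}$ is non-crossing precisely when $S$ is a cyclic interval.
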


\begin{proof}
The ``only if'' direction follows from Theorem~\ref{thm:tropical}.
For the ``if'' direction, since the union of two positively hyperbolic curves is positively 
hyperbolic, it suffices to prove the statement in the case where the tropical curve $C$ is {\em irreducible}, i.e., it is not the union of two other curves. 
%Consider an irreducible tropical curve with rays $v_1,\dots,v_m$ where each ray is a 0/1 vector with 1's appearing in a cyclically consecutive block. (If a ray has multiplicity $r$, then it appears $r$ times in the list, but it will follow from the argument below that multiplicities all have to be $1$.)
Consider an irreducible constant-coefficient homogeneous tropical curve in which each ray is a 0/1 vector with 1's appearing in a cyclically consecutive block.  Take any ray of the tropical curve; suppose the $1$'s appear in coordinates forming a consecutive block $\{k_1,\dots,k_2-1\}$. By the balancing condition, the number of such blocks ending at $k_2-1$ is equal to the number of blocks starting at $k_2$.  So there must be another ray where the $1$'s appear in a block $\{k_2,\dots,k_3-1\}$. Similarly, there must be another ray where the $1$'s appear in a block $\{k_3,\dots,k_4-1\}$, and so on.  We continue this way until we meet a block $\{k_\ell,\dots, k_1-1\}$.  Irreducibility ensures that there is no repetition among the numbers $k_1, k_2, \cdots, k_m$.   It is natural to think of this as a cycle $(k_1~k_2~\cdots~k_m)$ in the symmetric group $S_n$ on $n$ letters. By Corollary~\ref{cor:simplePreservers}, after a cyclic permutation of coordinates, we may assume that $k_1 = 1$. 
In Example \ref{ex:curve}, for instance, the original tropical curve corresponds to the cycle $(2~5~3~6)$ with $n=6$, which after cyclic permutation becomes the cycle $(1~4~2~5)$.

Now we consider Speyer's parametrization.  Let us put the rays of the tropical curve as rows of a matrix. For each column, we get the homogeneous polynomial in two variables $u$ and $v$ equal to $\Pi_{k}(u-k\,v)$, where $k$ runs over the positions with a $1$ in that column. See Example \ref{ex:curve}. Denote $s_k := u-k\,v$. 

Note that, by construction, when reading the matrix $A$ from left to right, if the block of $1$'s in row $i$ ends at column $j$, the block of $1$'s in row $i+1$ begins in column $j+1$. Since the cycle begins with $1$, the block of $1$'s in the last row ends in the last column, and so the indices do not need to be taken modulo $n$.
This means that, at each step along the sequence in the parametrization, either the expression stays the same, or one (and only one) $s_i$ is replaced by $s_{i+1}$.  Moreover, 
%again because of the cyclic permutation, 
each replacement $s_i \rightarrow s_{i+1}$ occurs exactly once in the sequence.

Using the same argument as in Example \ref{ex:curve}, it follows that putting alternating signs in Speyer's parametrization results in a positively hyperbolic curve whose tropicalization is the desired tropical curve.
\end{proof}

The nonconstant coefficient analogue of Theorem~\ref{thm:curves} is false, at least when we take multiplicities into account.  There exist nonconstant coefficient genus-one tropical curves satisfying the $0/1$ non-crossing condition which are not even realizable as the tropicalization of any genus-one algebraic curve, let alone a positively hyperbolic one.
For example, consider a cubic honeycomb tropical curve in the plane as in Figure~1 of~\cite{ChanSturmfels}.  Embed the plane as the $xy$-plane $\{(x,y,0): x,y\in \R\}$ in $\R^3$.  Cut each of the three rays in $(-1,-1,0)$ direction at a random point, and attach two infinite rays in directions $(-1,-1,-1)$ and $(0,0,1)$ to obtain a balanced curve.  Unless the cut locations satisfy the ``well-spacedness condition,'' which says the minimum among their distances to the cycle is attained at least twice, the new tropical curve is not realizable as the tropicalization of any genus-one curve over characteristic~$0$~\cite[Proposition~9.2]{Speyer}.  However, the new curve has edges in directions $(1,0,0),(0,1,0),(0,0,1),(1,1,0)$, and $(1,1,1)$, so it satisfies the condition of Theorem~\ref{thm:curves}.  We do not know, however, whether such a curve is set-theoretically realizable as the tropicalization of a (positively hyperbolic) curve of higher degree (i.e., if we disregard multiplicities).

\section{Bergman Fans of Positroids}\label{sec:Positroids}

In this section we study tropicalizations of linear subspaces that are positively hyperbolic, and we prove that they are characterized by the non-crossing condition of Theorem \ref{thm:tropical}.

As discussed before, any linear subspace $L \subset K^n$ has an {\em associated matroid} $M_L$ that encodes the linear dependencies among the coordinate functions in $L$. More concretely, the circuits of $M_L$ correspond to minimal subsets of coordinates that support a linear relation valid in $L$, that is,  
the set of circuits $\mathcal C(M_L)$ of $M_L$ is the collection of inclusion-minimal subsets $C \subset [n]$ for which there is a $c \in K^n$ with $C = \supp(c)$ and $c \cdot x = 0$ for all $x \in L$.

In the constant-coefficient case, the tropicalization $\trop(L) \subset \R^n$ is a polyhedral fan that remembers exactly the matroid $M_L$. Specifically, $\trop(L)$ is equal to 
\[\trop(L) = \{x \in \R^n : \min \{ x_c : c \in C \} \text{ is attained at least twice for all } C \in \mathcal C(M_L)\},\] 
with weights equal to one in all its maximal cones. 

Bergman fans are polyhedral fans that generalize tropicalizations of linear subspaces. Every matroid $M$ on the ground set $[n]$, whether it arises from a linear subspace or not, gives rise to the \emph{Bergman fan}
\[\mathcal B(M) := \{x \in \R^n : \min \{ x_c : c \in C \} \text{ is attained at least twice for all } C \in \mathcal C(M)\}.\]
Note that if $M$ has a loop then $\mathcal B(M) = \emptyset$. These polyhedral fans encode matroids in a geometric way, and have become very important objects in the study of both matroid theory and tropical geometry. One of their essential properties is that they are balanced polyhedral fans when endowed with the same weight on all their maximal cones.

Another related and very useful point of view on matroids arises from their associated polytopes. For any matroid $M$ on $[n]$, its \emph{matroid polytope} $\Gamma_M$ is the convex hull of the indicator vectors of the bases of~$M$:
\[
\Gamma_M := \convex\{e_B \mid B \text{ is a basis of }M\} \subset \R^n,
\]
where $e_B := \sum_{i \in B} e_i$, and $\{e_1, \dots, e_n\}$ is the standard basis of $\R^n$. The dimension of $\Gamma_M$ is equal to $n - m$, where $m$ is the number of connected components of $M$. The matroid polytope $\Gamma_M$ encodes all the information of $M$, and we will sometimes identify $M$ with its polytope $\Gamma_M$.

The Bergman fan of a matroid $M$ is naturally a subfan of the outer normal fan of $\Gamma_M$.
Indeed, a vector $c \in \R^n$ lies in $\mathcal B(M)$ if and only if the face of $\Gamma_M$ that maximizes the linear form $c \cdot x$ is the matroid polytope of a loopless matroid. This fan structure is called the {\em coarse subdivision} of $\mathcal B(M)$.

As seen in Section~\ref{sec:background}, a rank-$d$ matroid $M$ is called a \emph{positroid} if it is equal to $M_L$ for some linear subspace $L \in \Gr_{\geq 0}(d,n)$.  We will say that a matroid on $[n]$ is a {\em non-crossing matroid} if the ground sets of its connected components form a non-crossing partition of $[n]$. Ardila, Rincón, and Williams showed that all positroids are non-crossing matroids \cite[Theorem 7.6]{ARW1}.

%Connected components of positroids are governed by non-crossing partitions of $[n]$. We say that a partition $[n] = S_1 \sqcup \dots \sqcup S_t$ of $[n]$ is a \emph{non-crossing partition} if there are no $a, b, c, d$ in cyclic order such that $a, c \in S_i$ and $b, d \in S_j$ for some $i \neq j$. Equivalently, if we place the numbers $1, 2, \dots , n$ on $n$ vertices around a circle in clockwise order, and for each $S_i$ we draw a polygon on the corresponding vertices, then $[n] = S_1 \sqcup \dots \sqcup S_t$ is a
%non-crossing partition if and only if no two of these polygons intersect; see the example in Figure \ref{fig:noncrossing}. 
%\begin{figure}[ht]
%\centering
%\includegraphics[scale=1]{noncrossing}
%\caption{The partition $\{\{1,4,8,9\},\{2,3\},\{5,6,7\},\{10\}\}$ is a non-crossing partition.}
%\label{fig:noncrossing}
%\end{figure}

Our goal in this section is to determine which Bergman fans can 
be obtained as tropicalizations of positively hyperbolic varieties, even if we allow weights that are not necessarily equal to one. 
%The answer is given by the following theorem, which we will obtain as a corollary of Theorem \ref{thm:mainequivalencepositroids}. 
%We are now ready to characterize the Bergman fans that can be obtained as the tropicalization of a positively hyperbolic variety. 

\begin{theorem}\label{thm:mainequivalencepositroids}
Let $M$ be a rank-$d$ loopless matroid on the ground set $[n]$. The following are equivalent statements:
\begin{enumerate}[label=(\alph*)]
\item\label{it:hyperbolic} The Bergman fan $\mathcal B(M)$ (possibly taken with weight on its maximal cones greater than one) is the tropicalization of a positively hyperbolic variety.
\item\label{it:maxcones} The linear subspace parallel to any maximal cone in $\mathcal B(M)$ is spanned by 0/1 vectors whose supports form a non-crossing partition of $[n]$.
\item\label{it:positroid} $M$ is a positroid.
\end{enumerate}
\end{theorem}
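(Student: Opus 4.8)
The plan is to prove the three implications $(a) \Rightarrow (b) \Rightarrow (c) \Rightarrow (a)$, using the results already established. The implication $(a) \Rightarrow (b)$ is essentially immediate from Theorem~\ref{thm:tropical}: if $\mathcal B(M)$ is the tropicalization of a positively hyperbolic variety $X$, then since $\mathcal B(M)$ contains $(1,1,\dots,1)$ in its lineality space (equivalently, $M$ has no loops and we may work projectively), the homogeneous part of Theorem~\ref{thm:tropical} tells us that the linear subspace parallel to any maximal cone is spanned by $0/1$ vectors whose supports form a non-crossing partition of $[n]$. One small point to check: the maximal cones of $\mathcal B(M)$ in the coarse subdivision have dimension $n - m + 1$ when counting the lineality direction (where $m$ is the number of connected components), so the non-crossing partition produced has exactly $m$ parts, consistent with the structure of the Bergman fan; but for the logical implication we only need the non-crossing statement itself.

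For $(b) \Rightarrow (c)$, I would use the description of $\mathcal B(M)$ as a subfan of the normal fan of the matroid polytope $\Gamma_M$. A maximal cone of $\mathcal B(M)$ corresponds to a loopless face of $\Gamma_M$ of minimal dimension, namely a face $\Gamma_{M'}$ where $M'$ is a direct sum of $m$ connected rank-one-or-more pieces after suitable contraction/restriction — more precisely, the relative interior of a maximal cone is the normal cone to a vertex-like loopless face whose matroid has connected components partitioning $[n]$. The linear subspace parallel to this cone is the lineality space of that normal cone, which is spanned by the indicator vectors $e_{B_i}$ of the ground sets $B_i$ of the connected components of that face-matroid. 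So condition~$(b)$ says precisely that for every loopless face of $\Gamma_M$ of this type, the connected components form a non-crossing partition — i.e. every such loopless face is a non-crossing matroid. I would then invoke Proposition~\ref{pro:positroidfacets} (the extension of the Ardila--Rincón--Williams characterization to loopless faces promised in the introduction), which says $M$ is a positroid if and only if all its loopless faces are non-crossing matroids. The main technical obstacle here is matching up exactly which faces of $\Gamma_M$ correspond to maximal cones of $\mathcal B(M)$ and confirming that requiring the non-crossing condition only on those still forces the full positroid condition; this is where Remark~\ref{rem:maximal} (that "maximal" can be dropped) and the precise statement of Proposition~\ref{pro:positroidfacets} do the work. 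One should be careful that a loopless face of $\Gamma_M$ need not itself be a maximal-cone face, so the argument must show that non-crossingness propagates from the "bottom-dimensional loopless faces" to all loopless faces — presumably because any loopless face contains such a minimal loopless face in its closure and the non-crossing condition is inherited appropriately, or because Proposition~\ref{pro:positroidfacets} is stated in a form that only needs facets/maximal-cone faces.

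For $(c) \Rightarrow (a)$, suppose $M$ is a positroid, so $M = M_L$ for some $L \in \Gr_{\geq 0}(d,n)$. I would first want $L$ to lie in the positive part in the sense that its matroid is realized, and then take a Puiseux-series lift: there is a linear subspace $\widetilde L \subset K^d$ (or $K^n$) defined over $\R\ps$ with $\trop(\widetilde L) = \mathcal B(M)$ and with the property that $\widetilde L \cap \R\ps^n$ has orthogonal complement a nonnegative linear subspace over the real closed field $\R\ps$. By Proposition~\ref{prop:linearstable}, such an $\widetilde L$ is positively hyperbolic, and by the structure theory of tropical linear spaces (Speyer, or the fact that a generic lift of a matroid realizable over $\R$ with the right sign data realizes $\mathcal B(M)$ with all weights one), its tropicalization is exactly $\mathcal B(M)$. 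The key input is that positroids are realizable over $\R$ by totally nonnegative matrices, and moreover one can choose a totally positive-enough lift over $\R\ps$ so that the tropicalization is the full Bergman fan; this is where I expect to cite results on positroid varieties and their tropicalizations, or to construct the lift explicitly using the Postnikov parametrization of the nonnegative Grassmannian. The main obstacle in this direction is producing a lift whose tropicalization is all of $\mathcal B(M)$ (not a proper subfan) while keeping total nonnegativity — but this should follow because a generic totally nonnegative matrix over $\R\ps$ with the given support pattern has tropicalization equal to the Bergman fan of the underlying positroid, and total nonnegativity over $\R\ps$ transfers to positive hyperbolicity via Proposition~\ref{prop:linearstable}. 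Finally, one notes that the weight-one Bergman fan already suffices, so the parenthetical "possibly greater than one" in~$(a)$ is automatically satisfied; the reverse observation, that allowing larger weights doesn't enlarge the class, follows from $(a) \Rightarrow (c) \Rightarrow (a)$ giving weight-one realizability.
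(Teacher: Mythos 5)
Your overall architecture (a)$\Rightarrow$(b)$\Rightarrow$(c)$\Rightarrow$(a) matches the paper, and (a)$\Rightarrow$(b) is exactly the paper's argument via Theorem~\ref{thm:tropical}. But there are two concrete problems. First, in (b)$\Rightarrow$(c) you correctly identify that maximal cones of $\mathcal B(M)$ are dual to the $(n-d)$-dimensional loopless faces of $\Gamma_M$ and that their parallel subspaces are spanned by the indicator vectors of the connected components of those face-matroids (note, incidentally, that these partitions have $d$ parts and the maximal cones have dimension $d$, not $n-m+1$ as in your aside). However, Proposition~\ref{pro:positroidfacets} is not an ``if and only if all loopless faces are non-crossing'' statement: it requires a face dimension $k$ with $n-d \leq k < n-m$, so it simply does not apply when $m = d$, i.e.\ when every connected component of $M$ has rank one and $\Gamma_M$ itself is the only loopless face of dimension $n-d$. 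The paper handles this case separately: all components are rank-one uniform matroids (hence positroids), their ground sets form a non-crossing partition by hypothesis, and $M$ is a positroid by the Ardila--Rinc\'on--Williams characterization \cite[Theorem~7.6]{ARW1}. Your hedged ``presumably the non-crossing condition propagates'' does not cover this case, and the propagation you gesture at (Lemma~\ref{lem:noncrossing}) also carries the hypothesis $m<d$.

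Second, your (c)$\Rightarrow$(a) is both overcomplicated and, as justified, incorrect. A \emph{generic} totally nonnegative lift over $\R\{\!\{t\}\!\}$ does \emph{not} have tropicalization equal to $\mathcal B(M)$: nonconstant valuations produce a tropical linear space that is not a fan (already for $U_{2,4}$ a generic tropicalized line has a bounded edge, which $\mathcal B(U_{2,4})$ does not). What you actually need is a constant-coefficient realization, and then no genericity or ``full Bergman fan'' issue arises, since the trivially valued tropicalization of any realization of $M$ is exactly $\mathcal B(M)$ with weights one. Moreover, ``total nonnegativity transfers to positive hyperbolicity via Proposition~\ref{prop:linearstable}'' misreads that proposition: a subspace $W$ is positively hyperbolic iff it is real and the \emph{orthogonal complement} $(W\cap\R^n)^\perp$ is nonnegative, not iff $W$ itself is nonnegative. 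So to produce a positively hyperbolic $W$ with matroid $M$ you must realize $M^*$ nonnegatively and take $W = L^\perp$; the existence of such an $L$ is precisely the fact that the dual of a positroid is a positroid \cite{ARW2}, which your proposal never invokes but cannot do without. This is exactly the paper's (much shorter) argument: take $L \in \Gr_{\geq 0}(\,\cdot\,,n)$ over $\R$ representing $M^*$, note $L^\perp$ is positively hyperbolic by Proposition~\ref{prop:linearstable}, and observe $\trop(L^\perp) = \mathcal B(M)$.
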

Note that, in particular, the implication \ref{it:maxcones} $\implies$ \ref{it:hyperbolic} is the converse of Theorem \ref{thm:non-crossing} in the case of linear varieties.

\begin{remark}\label{rem:maximal}
Our proof of Theorem \ref{thm:mainequivalencepositroids} will in fact show a slightly stronger statement: If $M$ is a rank-$d$ loopless matroid on the ground set $[n]$ with $m$ connected components, then for any $m < k \leq d$, condition \ref{it:maxcones} can be substituted by
\begin{enumerate}
\item[(b')] The linear subspace parallel to any $k$-dimensional cone in the coarse subdivision of $\mathcal B(M)$ is spanned by 0/1 vectors whose supports form a non-crossing partition of $[n]$.
\end{enumerate}
\end{remark}

%\begin{theorem}\label{thm:linearcharacterization}
%Let $M$ be a matroid on $[n]$. The Bergman fan $\mathcal B(M)$ (taken with arbitrary weights on its maximal cones) is equal to the tropicalization of a positively hyperbolic variety if and only if $M$ is a positroid.
%\end{theorem} 

Before proving Theorem \ref{thm:mainequivalencepositroids}, we need a suitable characterization of positroids in terms of faces of their matroid polytope. 

\begin{lemma}\label{lem:looplessfacet}
Suppose $M$ is a connected matroid on the ground set $E$ of rank at least 2, and let $a,b \in E$. Then there exists a loopless facet $F$ of the matroid polytope $\Gamma_M$ such that $a$ and $b$ are in the same connected component of $F$.
\end{lemma}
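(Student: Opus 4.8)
The plan is to recast the statement in terms of the facets of $\Gamma_M$ and then produce the required facet by a closure–enlargement argument supplemented by an induction on $|E|$.

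First I would recall the facet structure of the matroid polytope of a connected matroid: the facets of $\Gamma_M$ are exactly the faces $F_A=\{x\in\Gamma_M:\sum_{i\in A}x_i=\operatorname{rk}_M(A)\}$, where $A$ runs over the proper nonempty subsets of $E$ for which both $M|A$ and $M/A$ are connected, and $F_A$ is the matroid polytope of $(M|A)\oplus(M/A)$. Since $M$ has rank $\geq 2$ it has no loops, so $M|A$ is always loopless, while $M/A$ is loopless exactly when $A$ is a flat; hence the \emph{loopless} facets of $\Gamma_M$ are the $F_A$ with $A$ a flat (call such an $A$ a \emph{flacet-flat}), and for such $A$ the matroid $(M|A)\oplus(M/A)$ has precisely two connected components, namely $A$ and $E\setminus A$. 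Thus the lemma is equivalent to finding a proper nonempty flat $A$ with $M|A$ and $M/A$ connected that does not separate $a$ and $b$, i.e.\ with $\{a,b\}\subseteq A$ or $\{a,b\}\cap A=\varnothing$.

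The main tool I would prove is an enlargement lemma: if $A'$ is a flat with $M|A'$ connected and $B$ is a connected component of a disconnected $M/A'$, then $A'\cup B$ is again a flat with $M|(A'\cup B)$ connected. Flatness holds because $B$, being a separator of the loopless matroid $M/A'$, is a flat of $M/A'$. For connectedness, if $M|(A'\cup B)$ were disconnected then, since $M|A'$ and $(M/A')|B$ are connected, it would have to split as $M|A'\oplus M|B$, making $A'$ and $B$ skew; but applying the connectivity inequality $\operatorname{rk}(X)+\operatorname{rk}(E\setminus X)\geq\operatorname{rk}(E)+1$, valid for every nonempty proper $X$ in the connected matroid $M$, to $X=E\setminus B$ yields $\operatorname{rk}_{M/A'}(B)<\operatorname{rk}_M(B)$, contradicting skewness. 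Granting this, whenever we have a proper flat $A_1\subsetneq E$ with $M|A_1$ connected, any maximal flat $A$ with $A_1\subseteq A\subsetneq E$ and $M|A$ connected must have $M/A$ connected, hence is a flacet-flat. Since $M$ is connected, any two elements lie in a common circuit (Tutte), and the closure of a circuit has connected restriction; so whenever $a=b$, or $a$ and $b$ are parallel, or some circuit through $a,b$ fails to span $M$, we take $A_1=\operatorname{cl}(\{a\})$ or $A_1=\operatorname{cl}(C)$ and obtain a flacet-flat $A\supseteq A_1$ containing both $a$ and $b$.

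The remaining case — $a\not\parallel b$ and every circuit through $a$ and $b$ spans $M$ — is the crux, since now no proper flat with connected restriction contains both $a$ and $b$, so we must instead produce a flacet-flat disjoint from $\{a,b\}$; I would argue by induction on $|E|$. If $\operatorname{rk}(M)=2$ (which covers the smallest possible $M$), take $A$ to be a parallel class of $M$ avoiding $\{a,b\}$, which exists because a connected rank-$2$ matroid has at least three parallel classes. If $\operatorname{rk}(M)\geq 3$, pick any $e\in E\setminus\{a,b\}$; by the standard fact that for a connected matroid on at least two elements one of $M\setminus e,\ M/e$ is connected, at least one such minor $M'$ is connected of rank $\geq 2$, has fewer elements, and still contains $a$ and $b$, so by induction $M'$ has a flacet-flat $A'$ not separating $a$ and $b$. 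Finally I would lift $A'$: set $A:=A'$ if $e\notin\operatorname{cl}_M(A')$ and $A:=A'\cup\{e\}$ otherwise, and check — using the compatibility of deletion and contraction with each other and the elementary fact that $N$ is connected whenever $N/e$ (resp.\ $N\setminus e$) is connected and $e$ is not a coloop (resp.\ loop) of $N$ — that $A$ is a flat of $M$ with $M|A$ and $M/A$ connected and $\{a,b\}\cap A=\{a,b\}\cap A'$, so $A$ does not separate $a$ and $b$. The one genuinely delicate point is this last case — showing that the direct closure/enlargement construction can be replaced by passing to a minor and lifting — but once the connectivity facts above are in place the verification of the lift is routine bookkeeping.
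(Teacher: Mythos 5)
Your proof is correct in outline, but it takes a genuinely different and much heavier route than the paper's. The paper argues directly on the polytope: connectivity of $M$ gives bases $B_1,B_2$ with $B_1\,\Delta\,B_2=\{a,b\}$; since the rank is at least $2$ there is $c\in B_1\cap B_2$, and if $S$ is the parallel class of $c$, the face of $\Gamma_M$ cut out by $\sum_{i\in S}x_i=1$ has matroid $M|S\oplus M/S$ (greedy algorithm), is a proper loopless face containing both $e_{B_1}$ and $e_{B_2}$ — so $a$ and $b$ lie in one connected component — and any facet containing this face inherits looplessness and the component condition. You instead translate the statement through the flacet description of facets of $\Gamma_M$, reducing it to producing a proper flat $A$ with $M|A$ and $M/A$ connected that does not separate $a$ from $b$; you then prove an enlargement lemma (grow a flat with connected restriction by a component of the contraction, using the connectivity inequality to rule out skewness), settle the cases where $\{a,b\}$ lies in a proper flat with connected restriction, and handle the remaining case by a deletion/contraction induction with a lifting step. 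Your approach buys a dual, more structural picture (exactly which flats give loopless facets) and is in the spirit of the "flacet" literature, but at the cost of the facet characterization, an auxiliary enlargement lemma, and a four-subcase lift; the paper's argument needs only basis exchange in connected matroids and the face-matroid description, and avoids induction entirely.

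One caveat in your write-up: the "elementary fact" you invoke is stated too weakly. From $N\setminus e$ connected you may conclude $N$ connected only when $e$ is neither a loop nor a coloop of $N$ (e.g.\ adding a coloop to a connected matroid is a counterexample to your version), and dually for $N/e$. In each of your lifting subcases the omitted hypothesis does hold automatically — $M$ is loopless and, being connected, has no coloops, and the remaining loop/coloop conditions on $e$ in $M|A$, resp.\ $M/A'$, are exactly the closure dichotomy you split on — so the bookkeeping does go through, but those checks should be made explicit rather than subsumed under the misstated lemma.
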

\begin{proof}
Since $M$ is a connected matroid, there exist bases $B_1, B_2$ of $M$ such that their symmetric difference $B_1 \Delta B_2$ is equal to $\{a,b\}$. As $M$ has rank at least $2$, there is $c \in B_1 \cap B_2$. Let $S$ be the parallelism class of $c$, and let $F'$ be the face of $\Gamma_{M}$ obtained by intersecting $\Gamma_{M}$ with the supporting hyperplane described by the equation $\sum_{i \in S} x_i = 1$. By the greedy algorithm, the matroid associated to $F'$ is $M|S \oplus M/S$. As $M$ is a connected matroid of rank at least 2, $F'$ is a proper face of $\Gamma_{M}$. Also, since $S$ is a parallelism class, $F'$ is a loopless face of $\Gamma_M$. Moreover, $B_1$ and $B_2$ are bases of $F'$, and thus $a$ and $b$ are in the same connected component of $F'$. Letting $F$ be any facet of $\Gamma_{M}$ containing $F'$, we see that $F$ is a loopless facet in which $a$ and $b$ are in the same connected component.
\end{proof}

\begin{lemma}\label{lem:noncrossing}
Suppose $M$ is a loopless matroid of rank $d$ with $m$ connected components, and $m < d$. If all the loopless facets of $\Gamma_M$ are non-crossing matroids then $M$ is a non-crossing matroid.
\end{lemma}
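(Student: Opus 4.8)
The plan is to argue by contradiction, exploiting the product structure of matroid polytopes together with Lemma~\ref{lem:looplessfacet}. Write $M = M_1 \oplus \cdots \oplus M_m$ for the decomposition of $M$ into connected components, with $S_l \subseteq [n]$ the ground set of $M_l$, so that $\Gamma_M = \Gamma_{M_1} \times \cdots \times \Gamma_{M_m}$. Suppose for contradiction that the partition $\{S_1,\dots,S_m\}$ of $[n]$ is crossing, say $a,b,c,d \in [n]$ appear in this cyclic order with $a,c \in S_i$ and $b,d \in S_j$ for some $i \neq j$. Two elementary observations drive the argument. First, a facet of a product polytope $P \times Q$ is either $F \times Q$ for a facet $F$ of $P$ or $P \times G$ for a facet $G$ of $Q$; hence for any index $k$ and any facet $F_k$ of $\Gamma_{M_k}$, the polytope $\widehat{F} := \Gamma_{M_1} \times \cdots \times F_k \times \cdots \times \Gamma_{M_m}$ is a facet of $\Gamma_M$, and its associated matroid is $N_k \oplus \bigoplus_{l \neq k} M_l$, where $N_k$ is the matroid of $F_k$; if $F_k$ is a loopless facet then, since all the $M_l$ are loopless, $\widehat{F}$ is a loopless facet of $\Gamma_M$. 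Second, since $\sum_{l} \operatorname{rank}(M_l) = \operatorname{rank}(M) = d > m$ and every component has rank at least $1$ (being loopless), at least one component has rank $\geq 2$.

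Now I would split into cases. If the component $M_i$ containing $a$ and $c$ has rank $\geq 2$, apply Lemma~\ref{lem:looplessfacet} to $M_i$ and the pair $a,c$ to get a loopless facet $F_i$ of $\Gamma_{M_i}$ in whose matroid $a$ and $c$ lie in a single connected component $T \subseteq S_i$. The resulting facet $\widehat{F}$ of $\Gamma_M$ is loopless, and among the ground sets of its connected components are $T$ (containing $a,c$) and $S_j$ (which is untouched and contains $b,d$); as $T$ and $S_j$ are disjoint and $a,b,c,d$ are in cyclic order, $\widehat{F}$ is a crossing matroid, contradicting the hypothesis. The case where $M_j$ has rank $\geq 2$ is symmetric. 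In the remaining case both $M_i$ and $M_j$ have rank $1$, so by the rank count there is a component $M_k$ with $k \notin\{i,j\}$ and $\operatorname{rank}(M_k) \geq 2$; applying Lemma~\ref{lem:looplessfacet} to $M_k$ (and any two of its elements) yields a loopless facet $F_k$, hence a loopless facet $\widehat{F}$ of $\Gamma_M$ whose connected components still include the untouched sets $S_i$ and $S_j$, so that $\widehat{F}$ is again a crossing matroid. Every case produces a contradiction, so $\{S_1,\dots,S_m\}$ is non-crossing, i.e.\ $M$ is a non-crossing matroid.

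I do not anticipate a real difficulty here: the only points needing care are verifying that the product of a single facet with the remaining full factors is genuinely a facet (of the correct dimension) and that looplessness is inherited by such products, together with the bookkeeping that in each case the two crossing pairs $\{a,c\}$ and $\{b,d\}$ remain in distinct connected components of the facet we build — which is exactly what Lemma~\ref{lem:looplessfacet} is designed to guarantee. The hypothesis $m < d$ is used precisely to ensure the existence of a rank-$\geq 2$ component to which Lemma~\ref{lem:looplessfacet} can be applied.
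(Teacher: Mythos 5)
Your proposal is correct and follows essentially the same route as the paper's proof: argue by contradiction, use the product structure of $\Gamma_M$ to build a loopless facet from a loopless facet of a single rank-$\geq 2$ connected component (via Lemma~\ref{lem:looplessfacet}), and check in each case that the crossing pair of components survives into the facet's component partition. The only difference is cosmetic — you organize the cases by whether one of the two crossing components has rank at least $2$, while the paper first handles a rank-$\geq 2$ component outside the crossing pair — but the argument is the same.
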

\begin{proof}
Let $M = M_1 \oplus M_2 \oplus \dots \oplus M_m$ be the decomposition of $M$ into connected components, and denote by $E_i$ the ground set of the matroid $M_i$. Suppose for a contradiction that $E_1, E_2, \dots, E_m$ is a crossing partition of $[n]$. Without loss of generality, we can assume that the sets $E_1$ and $E_2$ cross. Since $M$ is a loopless matroid with fewer connected components than its rank, one of the $M_i$ must have rank at least 2. Suppose first that one of $M_3, M_4, \dots, M_m$ has rank at least 2, say $M_3$. By Lemma \ref{lem:looplessfacet}, there is a loopless facet $F$ of $\Gamma_{M_3}$. In this case, the loopless facet $\Gamma_{M_1} \times \Gamma_{M_2} \times F \times \Gamma_{M_4} \times \dots \times \Gamma_{M_m}$ of $\Gamma_M$ is not a non-crossing matroid, as $E_1$ and $E_2$ are still parts of the corresponding partition of $[n]$. Assume now that one of $M_1$ and $M_2$ has rank 2 or more, say $M_1$. As $E_1$ and $E_2$ cross, there exist $a,b \in E_1$ and $c, d \in E_2$ such that $\{a,b\}$ and $\{c,d\}$ cross. By Lemma \ref{lem:looplessfacet}, there is a loopless facet $F$ of $\Gamma_{M_1}$ in which $a$ and $b$ are in the same connected component. But then the loopless facet $F \times \Gamma_{M_2} \times \dots \times \Gamma_{M_m}$ of $\Gamma_M$ is not a non-crossing matroid, as $\{a,b\}$ and $\{c,d\}$ are still crossing subsets of two different connected components.
\end{proof}
We note that the assumption that $m < d$ is necessary in the statement of Lemma \ref{lem:noncrossing}, as for instance there exist matroids with $m \geq d$ which are not positroids but do not have any loopless facet. Once such matroid is the rank-$2$ matroid on the set $[4]$ with bases $\{12,14,23,34\}$, which has $2$ connected components (on the sets $\{1,3\}$ and $\{2,4\}$).

The class of positroid polytopes is closed under taking faces \cite[Corollary 5.7]{ARW1}. Since any positroid is a non-crossing matroid, it follows that any face of a positroid polytope must be a non-crossing matroid. Conversely, it was shown in \cite[Proposition 5.6]{ARW1} that a matroid $M$ is a positroid if all the facets of $\Gamma_M$ are non-crossing matroids. The following proposition is a strengthening of that result.

\begin{proposition}\label{pro:positroidfacets}
Let $M$ be a loopless matroid of rank $d$ with $m$ connected components, and consider some dimension $k$ with $n-d \leq k < n - m = \dim(\Gamma_M)$. If all the $k$-dimensional loopless faces of the matroid polytope $\Gamma_M$ are non-crossing matroids, then $M$ is a positroid.
\end{proposition}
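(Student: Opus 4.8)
The plan is to reduce the claim to the known characterization of positroids in terms of loopless facets, namely \cite[Proposition 5.6]{ARW1}: a matroid $M$ is a positroid provided every facet of $\Gamma_M$ is a non-crossing matroid. Since we are only given information about the $k$-dimensional loopless faces for a single $k$ (with $n-d \leq k < \dim(\Gamma_M)$), the strategy is to propagate the non-crossing property upward in dimension, one step at a time, until we reach the facets. Concretely, I would prove by downward induction on $j$, starting from $j = k$ and ending at $j = \dim(\Gamma_M) - 1$, that every $j$-dimensional loopless face of $\Gamma_M$ is a non-crossing matroid.

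For the inductive step, suppose every $j$-dimensional loopless face of $\Gamma_M$ is non-crossing, and let $G$ be a $(j+1)$-dimensional loopless face. The face $G$ is itself a matroid polytope $\Gamma_N$ for a loopless matroid $N$, and every proper face of $G$ is a face of $\Gamma_M$ of strictly smaller dimension; in particular all $j$-dimensional loopless faces of $\Gamma_N$ are loopless faces of $\Gamma_M$ of dimension $j$, hence non-crossing by the inductive hypothesis. Now I want to apply Lemma~\ref{lem:noncrossing} to $N$: its hypothesis requires that all loopless \emph{facets} of $\Gamma_N$ be non-crossing and that $N$ have fewer connected components than its rank. The facets of $\Gamma_N$ have dimension $j = \dim(\Gamma_N) - 1 \geq k \geq n - d$, and I must check that $j$ is large enough that any loopless facet of $\Gamma_N$ is in fact $j$-dimensional (as a face of $\Gamma_M$) — this is where the bound $k \geq n-d$ on the starting dimension is used, since a loopless face of a rank-$d$ matroid polytope on $[n]$ always has dimension at least $n - d$ would be the wrong direction; rather, the point is that every face we encounter at dimension $\geq n-d$ that is loopless is handled, and facets of $\Gamma_N$ inherit looplessness appropriately. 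The connected-component condition $m_N < \operatorname{rank}(N)$ needs to be verified as well: if instead $m_N = \operatorname{rank}(N)$ then $N$ is a direct sum of rank-one matroids (parallelism classes), which is automatically a positroid and in particular non-crossing — I would dispatch this degenerate case separately, and note that it also explains why Lemma~\ref{lem:noncrossing} required $m < d$.

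Once the downward induction reaches $j = \dim(\Gamma_M) - 1$, every loopless facet of $\Gamma_M$ is non-crossing. Applying Lemma~\ref{lem:noncrossing} once more (in the non-degenerate case $m < d$, which holds by hypothesis) shows $M$ itself is non-crossing. But being non-crossing is not yet enough to conclude $M$ is a positroid; for that I invoke \cite[Proposition 5.6]{ARW1}, whose hypothesis — every facet of $\Gamma_M$ is non-crossing — is exactly what the induction established. The main obstacle I anticipate is bookkeeping around the boundary cases: ensuring that when a $(j+1)$-dimensional loopless face has a facet that is itself loopless, that facet genuinely appears as a $j$-dimensional loopless face of $\Gamma_M$ (so that the inductive hypothesis applies to it), and separately handling faces that are direct sums of parallelism classes where the rank equals the number of components. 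I would also double-check that passing to a face never introduces loops in a way that breaks the argument — it does not, since by definition we only track loopless faces, and a loopless face of a loopless face is a loopless face of the original polytope.
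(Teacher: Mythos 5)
Your upward propagation step is sound and is essentially the paper's own reduction: for a loopless face $G$ of dimension $j+1>k$, its matroid $N$ is loopless of rank $d$ with $n-(j+1)$ connected components, and the hypothesis $k\geq n-d$ gives $n-(j+1)<d$, so Lemma~\ref{lem:noncrossing} applies directly and $N$ is non-crossing. This is the actual (and only) role of the bound $k\geq n-d$; your ``degenerate case'' with the number of components equal to the rank of $N$ simply never occurs in the induction. Note also that your proposed way of dispatching it is false: a loopless matroid that is a direct sum of rank-one matroids need \emph{not} be non-crossing or a positroid --- the rank-$2$ matroid on $[4]$ with bases $\{12,14,23,34\}$, whose components are $\{1,3\}$ and $\{2,4\}$ (mentioned right after Lemma~\ref{lem:noncrossing}), is a counterexample. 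This slip is harmless only because the case is vacuous.

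The genuine gap is your final step. \cite[Proposition 5.6]{ARW1} requires that \emph{all} facets of $\Gamma_M$ be non-crossing, whereas your induction controls only the \emph{loopless} facets; a facet whose matroid has loops can still be crossing (the loops are just singleton components and do not prevent two other components from crossing), so its hypothesis is not ``exactly what the induction established.'' Indeed, if loopless facets sufficed to invoke that proposition directly, the facet case of Proposition~\ref{pro:positroidfacets} would be trivial, and the whole point of the statement is to weaken ``all facets'' to ``loopless faces.'' The paper bridges this by contradiction: if $M$ is not a positroid, Lemma~\ref{lem:noncrossing} still gives that $M$ is non-crossing, so by \cite[Theorem 7.6]{ARW1} some connected component $M_i$ is not a positroid; applying \cite[Proposition 5.6]{ARW1} to the connected matroid $M_i$ yields a crossing facet of $\Gamma_{M_i}$, which has exactly two connected components $M_i|S$ and $M_i/S$ with $S$ and $E_i\setminus S$ crossing --- hence each component has at least two elements and the facet is loopless --- and taking its direct sum with the remaining components of $M$ produces a loopless crossing facet of $\Gamma_M$, contradicting the hypothesis. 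Without this (or an equivalent) argument, your proof only establishes that $M$ is a non-crossing matroid, which, as you yourself observe, is strictly weaker than being a positroid.
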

\begin{proof}
We first prove the case where $k = n - m - 1 = \dim(\Gamma_M) - 1$. Assume by contradiction that all the loopless facets of the matroid polytope $\Gamma_M$ are non-crossing matroids, but $M$ is not a positroid. Lemma \ref{lem:noncrossing} tells us that $M$ is a non-crossing matroid. Let $M = M_1 \oplus M_2 \oplus \dots \oplus M_m$ be the decomposition of $M$ into connected components, and denote by $E_i$ the ground set of the matroid $M_i$. Since $M$ is not a positroid, \cite[Theorem 7.6]{ARW1} implies that some $M_i$ is not a positroid. By \cite[Proposition 5.6]{ARW1}, there is a facet $F$ of $\Gamma_{M_i}$ that is not a non-crossing matroid. This facet must correspond to a matroid with two connected components $M_i|S \oplus M_i/S$, where the sets $S$ and $E_i \setminus S$ cross. In particular, neither $M_i|S$ nor $M_i/S$ are loops, and so $F$ is a loopless matroid. But then the loopless facet of $\Gamma_M$ given by the matroid $M_1 \oplus \dots \oplus M_{i-1} \oplus M_i|S \oplus M_i/S \oplus M_{i+1} \oplus \dots \oplus M_m$ is not a non-crossing matroid, contradicting our assumption.
%the matroid polytope $\Gamma_{M_i}$ cannot be described only by inequalities of the form $\sum_{i \in C} x_i \leq r(C)$ with $C$ a cyclic interval of $E_i$, together with the equality $\sum_{i \in E_i} x_i = r(E_i)$. As $M_i$ is a connected matroid, any hyperplane of the form $\sum_{i \in S} x_i = r(S)$ with $\emptyset \subsetneq S \subsetneq E_i$ intersects $\Gamma_{M_i}$ in a proper face. It follows that there is a facet of $\Gamma_{M_i}$ that is obtained by intersecting $\Gamma_{M_i}$ with a supporting hyperplane of the form $\sum_{i \in S} x_i = r(S)$ with $S$ not a cyclic interval of $E_i$. By the greedy algorithm, this facet corresponds to the matroid $M_i|S \oplus M_i/S$. Moreover, this matroid $M_i|S \oplus M_i/S$ must have exactly 2 connected components, which are then $M_i|S$ and $M_i/S$. As $S$ is not a cyclic interval of $E_i$, the sets $S$ and $E_i \setminus S$ cross. In particular, neither $M_i|S$ nor $M_i/S$ are loops. But then the loopless facet of $\Gamma_M$ given by the matroid $M_1 \oplus \dots \oplus M_{i-1} \oplus M_i|S \oplus M_i/S \oplus M_{i+1} \oplus \dots \oplus M_m$ is not a non-crossing matroid, contradicting our assumption.

To prove the result for more general $k$, we note that Lemma \ref{lem:noncrossing} implies that, for $k \geq n-d$, if all the $k$-dimensional loopless faces of $\Gamma_M$ are non-crossing, then all the $(k+1)$-dimensional loopless faces of $\Gamma_M$ are also non-crossing. %Indeed, assume that $F$ is a $(k+1)$-dimensional loopless face of $\Gamma_M$ that is not non-crossing. The face $F$ is thus a matroid with $n-k-1$ connected components. Since $n-k-1 < d$, Lemma \ref{lem:noncrossing} says that there is a loopless facet $F'$ of $F$ that is not non-crossing. But then $F'$ is a $k$-dimensional loopless face of $\Gamma_M$ that is not non-crossing, which is a contradiction. 
Using this fact, we can reduce the case of general $k$ to the case where $k = n - m - 1$, completing the proof.
\end{proof}

\begin{proof}[Proof of Theorem \ref{thm:mainequivalencepositroids}]
The implication \ref{it:hyperbolic}$\implies$\ref{it:maxcones} follows from Theorem \ref{thm:non-crossing}. To show that \ref{it:maxcones}$\implies$\ref{it:positroid}, recall that the maximal cones of $\mathcal B (M)$ are dual to the $(n-d)$-dimensional loopless faces of the matroid polytope $\Gamma_M$. Moreover, the affine span of the cone dual to such a face $F$ is $\spann(e_{E_1},e_{E_2},\dots,e_{E_d})$, where $E_1, E_2, \dots, E_d$ are the ground sets of the connected components of $F$. This implies that any $(n-d)$-dimensional loopless face of $\Gamma_M$ is a non-crossing matroid. Suppose that $M$ has $m$ connected components, so $\Gamma_M$ is an $(n-m)$-dimensional polytope. If $m < d$, we can apply Proposition \ref{pro:positroidfacets} to conclude that $M$ is a positroid. If $m = d$, then all the connected components of $M$ must be uniform matroids of rank one, and so in particular they are positroids. Since the ground sets of all these components form a non-crossing partition of $[n]$, it follows from \cite[Theorem 7.6]{ARW1} that $M$ is a positroid. 

Finally, to prove that \ref{it:positroid}$\implies$\ref{it:hyperbolic}, suppose $M$ is a positroid. By \cite[Proposition 3.5]{ARW1}, the dual matroid $M^*$ is also a positroid. If $L \in \Gr_{\geq 0}(d,n)$ is a linear space representing $M^*$, by Proposition~\ref{prop:linearstable} its orthogonal complement $L^\perp$ is a positively hyperbolic linear space. Then $\mathcal B (M) = \trop(L^\perp)$ is the tropicalization of a hyperbolic variety.
\end{proof}

\section{Questions and future directions}
\label{sec:questions}

Positively hyperbolic varieties and their tropicalizations provide a structure that 
encompasses both Bergman fans of positroids,  the ``hive cones'' considered by Speyer's approach to Horn's problem \cite{SpeyerHives}, 
and more generally M-convex functions coming from stable polynomials \cite{Branden2}. 
As such, we believe they are deserving of further study. 
While we have established some of the basic properties of these varieties and their tropicalizations, 
there are many interesting questions remaining, some of which we
briefly discuss here.

Let us first recall a question posted at the end of Section~\ref{sec:curves}:
\begin{question}
Is every (non-fan) tropical curve in $\R^n/(1,1,\dots,)$ satisfying
Theorem~\ref{thm:tropical} realizable set-theoretically as the
tropicalization of a positively hyperbolic variety?
\end{question} 

We can also hope to understand the lower dimensional cones of these tropical varieties. 
\begin{question}
Are lower dimensional cones of the tropicalization of a positively hyperbolic variety spanned by $0/1$ vectors with non-crossing supports?
\end{question}
Note that this is true for linear varieties, as discussed in Remark \ref{rem:maximal}.  
This suggests that the statement is likely to hold at least for \emph{tropically smooth} varieties. 

%
%Theorem~\ref{thm:Chow} suggests that there may be a natural stable polynomial associated to a positively hyperbolic variety: 
%
%\begin{question}
%Given a real projective positively hyperbolic variety $X\subset \P^{n-1}(\C)$, 
%is the polynomial $f_X \in \R[x_1, \hdots, x_n]$ stable, where $f_X$ is obtained from the Chow form of $X$ by substituting 
%$\prod_{i\in I}x_i$ for the Pl\"ucker coordinate $p_I$? 
%\end{question}
%

Another natural direction is to extend more of the theory of stable polynomials to this context.
As mentioned in the introduction, complex inhomogeneous stable polynomials, such as $x_1+x_2+i$, 
may not define positively hyperbolic hypersurfaces. One important step is possibly a modification of our main definition: 
\begin{question}
Is there a natural definition of positive hyperbolicity that includes all complex, affine hypersurfaces defined by a stable polynomial? 
\end{question}
Most likely this would consist of requiring the imaginary parts of points in $X$ to avoid only half of the orthants with $\varbar(z)<c$, 
choosing exactly one from a sign pattern and its negation. In particular, this property would no longer be preserved under negation. 

One central piece of the theory of stable polynomials is the characterization by Borcea and Br\"and\'en of 
linear operations on $\C[x_1, \hdots, x_n]$ that preserve stability.
A possible generalization of this is the the following. Let us call an ideal $I\subset \C[x_1, \hdots, x_n]$ positively hyperbolic if its variety is. 
\begin{question}
What linear operations on $\C[x_1, \hdots, x_n]$ preserve positive hyperbolicity of ideals? 
\end{question}
A polynomial $f+ig$ where $f,g\in \R[x_1, \hdots, x_n]$ is stable if and only if $f$ and $g$ are stable and $g$ interlaces $f$.
For univariate polynomials, this is known as the Hermite--Biehler Theorem. See, for example, \cite[\S2]{wagnerSurvey}. 
\begin{question}
Can one characterize positive hyperbolicity of complex varieties in
terms of the positive hyperbolicity of some real varieties associated
to it? 
\end{question}

There are also several interesting questions concerning tropical positively hyperbolic varieties purely as combinatorial objects.
For example, a distinction is made between ``tropicalized linear spaces'', which are obtained as tropicalizations of linear subspaces, 
and ``tropical linear spaces'', which are tropical objects satisfying the
necessary combinatorial conditions 
%(namely having degree one, or equivalently, being dual to a matroid polytope subdivision)
but not necessarily coming from a classical linear subspace. 

\begin{question}
What is the right combinatorial definition of a tropical positively hyperbolic variety? 
\end{question}

Recently, Br\"and\'en and Huh defined a generalization of homogeneous stable polynomials, called \emph{Lorentzian polynomials}, whose 
valuations achieve \emph{all} M-concave functions \cite{BH19}.  It would be very interesting to find an analogous generalization for varieties of higher codimension. 

\begin{question}
Is there a good notion of a {\em Lorentzian variety} in $K^n$? Do their tropicalizations realize all `tropical positively hyperbolic varieties'? 
\end{question}

\begin{figure}[h]
%  \hspace{-3cm}
  \begin{tabular}{cr}
 \begin{minipage}{0.27\linewidth}
\begin{center}
        \includegraphics[scale=0.12]{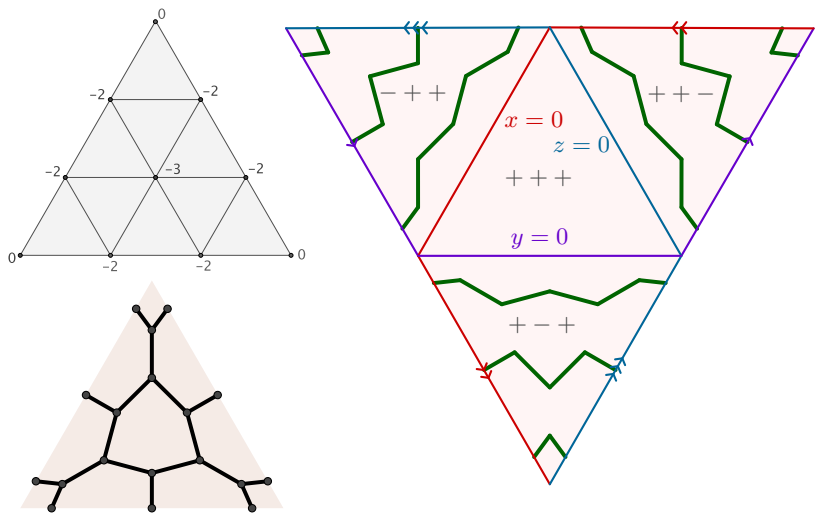}
\vspace{-2cm}
        \definecolor{uuuuuu}{rgb}{0.26666666666666666,0.26666666666666666,0.26666666666666666}
\definecolor{zzttqq}{rgb}{0.6,0.2,0.}
\begin{tikzpicture}[scale=0.7, line cap=round,line join=round,>=triangle 45,x=1.0cm,y=1.0cm]
\clip(-3.300924979593195,-2.5203020780753635) rectangle (10.964545923601321,6.17356037802241);
\fill[line width=1.5pt,color=zzttqq,fill=zzttqq,fill opacity=0.10000000149011612] (0.,0.) -- (5.,0.) -- (2.5,4.330127018922194) -- cycle;
\draw [line width=1.5pt] (2.4995823635152687,3.386321903125678)-- (2.4996942672199283,2.47897309625024);
\draw [line width=1.5pt] (2.4996942672199283,2.47897309625024)-- (3.167166871427041,1.8171953757817405);
\draw [line width=1.5pt] (3.167166871427041,1.8171953757817405)-- (3.410325845067579,0.9119632373275783);
\draw [line width=1.5pt] (3.410325845067579,0.9119632373275783)-- (2.499917552530058,0.6685088193419159);
\draw [line width=1.5pt] (2.499917552530058,0.6685088193419159)-- (1.5894492093238604,0.9119632373275783);
\draw [line width=1.5pt] (1.5894492093238604,0.9119632373275783)-- (1.8323848976542678,1.8171953757817405);
\draw [line width=1.5pt] (1.8323848976542678,1.8171953757817405)-- (2.4996942672199283,2.47897309625024);
\draw [line width=1.5pt] (2.4995823635152687,3.386321903125678)-- (2.79753988508124,3.789981368885757);
\draw [line width=1.5pt] (3.167166871427041,1.8171953757817405)-- (3.7497346608056836,2.1514497854526984);
\draw [line width=1.5pt] (3.410325845067579,0.9119632373275783)-- (4.201141137154043,0.45828883388985897);
\draw [line width=1.5pt] (4.201141137154043,0.45828883388985897)-- (4.7019294365301265,0.5129182020196391);
\draw [line width=1.5pt] (4.201141137154043,0.45828883388985897)-- (4.403985389889412,0.);
\draw [line width=1.5pt] (2.499917552530058,0.6685088193419159)-- (2.5,0.);
\draw [line width=1.5pt] (1.5894492093238604,0.9119632373275783)-- (0.7987458209420546,0.45828883388985897);
\draw [line width=1.5pt] (0.7987458209420546,0.45828883388985897)-- (0.5960146101105878,0.);
\draw [line width=1.5pt] (0.7987458209420546,0.45828883388985897)-- (0.2979440466407149,0.5129182020196391);
\draw [line width=1.5pt] (1.8323848976542678,1.8171953757817405)-- (1.2497346608056836,2.1514497854526984);
\draw [line width=1.5pt] (2.201525274970652,3.789981368885757)-- (2.4995823635152687,3.386321903125678);
\begin{scriptsize}
\draw [fill=uuuuuu] (2.4995823635152687,3.386321903125678) circle (2.0pt);
\draw [fill=uuuuuu] (4.201141137154043,0.45828883388985897) circle (2.0pt);
\draw [fill=uuuuuu] (0.7987458209420546,0.45828883388985897) circle (2.0pt);
\draw [fill=uuuuuu] (2.4996942672199283,2.47897309625024) circle (2.0pt);
\draw [fill=uuuuuu] (3.410325845067579,0.9119632373275783) circle (2.0pt);
\draw [fill=uuuuuu] (1.5894492093238604,0.9119632373275783) circle (2.0pt);
\draw [fill=uuuuuu] (3.167166871427041,1.8171953757817405) circle (2.0pt);
\draw [fill=uuuuuu] (2.499917552530058,0.6685088193419159) circle (2.0pt);
\draw [fill=uuuuuu] (1.8323848976542678,1.8171953757817405) circle (2.0pt);
\draw [fill=uuuuuu] (3.7497346608056836,2.1514497854526984) circle (2.0pt);
\draw [fill=uuuuuu] (1.2497346608056836,2.1514497854526984) circle (2.0pt);
\draw [fill=uuuuuu] (2.5,0.) circle (2.0pt);
\draw [fill=uuuuuu] (2.79753988508124,3.789981368885757) circle (2.0pt);
\draw [fill=uuuuuu] (4.7019294365301265,0.5129182020196391) circle (2.0pt);
\draw [fill=uuuuuu] (0.5960146101105878,0.) circle (2.0pt);
\draw [fill=uuuuuu] (4.403985389889412,0.) circle (2.0pt);
\draw [fill=uuuuuu] (2.201525274970652,3.789981368885757) circle (2.0pt);
\draw [fill=uuuuuu] (0.2979440466407149,0.5129182020196391) circle (2.0pt);
\end{scriptsize}
\end{tikzpicture}
\end{center}
    \end{minipage}
&
    \begin{minipage}{0.55\linewidth}
\vspace{-2cm}
      \begin{center}    
\definecolor{qqwwzz}{rgb}{0.,0.4,0.6}
\definecolor{ccqqqq}{rgb}{0.8,0.,0.}
\definecolor{wwqqcc}{rgb}{0.4,0.,0.8}
\definecolor{qqwuqq}{rgb}{0.,0.39215686274509803,0.}
\definecolor{ffzzzz}{rgb}{1,0.6,0.6}
\definecolor{uuuuuu}{rgb}{0.3,0.3,0.3}
\begin{tikzpicture}[scale=0.7, line cap=round,line join=round,>=triangle 45,x=1.0cm,y=1.0cm]
\clip(-3.9869600332105346,-4.894805284241832) rectangle (13.250198005717849,5.2035674913944945);
\fill[line width=0.8pt,color=ffzzzz,fill=ffzzzz,fill opacity=0.10000000149011612] (0.,0.) -- (5.,0.) -- (2.5,4.330127018922194) -- cycle;
\fill[line width=0.8pt,color=ffzzzz,fill=ffzzzz,fill opacity=0.10000000149011612] (7.512048195851799,4.32314860509302) -- (5.,0.) -- (2.5120675815800837,4.337071855685034) -- cycle;
\fill[line width=0.8pt,color=ffzzzz,fill=ffzzzz,fill opacity=0.10000000149011612] (0.,0.) -- (-2.507959845445562,4.32552163485893) -- (2.492031697684249,4.334717755256618) -- cycle;
\fill[line width=0.8pt,color=ffzzzz,fill=ffzzzz,fill opacity=0.10000000149011612] (0.,0.) -- (5.,0.) -- (2.5,-4.330127018922193) -- cycle;
\draw [line width=1.5pt,color=qqwuqq] (1.701425245200096,3.8775630875932663)-- (1.9042401060301457,4.339779937959488);
\draw [line width=1.5pt,color=qqwuqq] (1.701425245200096,3.8775630875932663)-- (0.9095385227732811,3.4180899960291566);
\draw [line width=1.5pt,color=qqwuqq] (0.6667313978433427,2.505609013742603)-- (0.9095385227732811,3.4180899960291566);
\draw [line width=1.5pt,color=qqwuqq] (-0.001397417434056858,1.8369464859070654)-- (0.6667313978433427,2.505609013742603);
\draw [line width=1.5pt,color=qqwuqq] (-0.001397417434056858,1.8369464859070654)-- (-7.022441038171956E-4,0.9231206132951049);
\draw [line width=1.5pt,color=qqwuqq] (-7.022441038171956E-4,0.9231206132951049)-- (-0.2989561418912493,0.5156148181450715);
\draw [line width=1.5pt,color=qqwuqq] (-0.0027845206859944405,3.6603346748173697)-- (-0.003296704642941295,4.333615612153503);
\draw [line width=1.5pt,color=qqwuqq] (-0.0027845206859944405,3.6603346748173697)-- (-0.914734514625107,3.4121947423707852);
\draw [line width=1.5pt,color=qqwuqq] (-0.914734514625107,3.4121947423707852)-- (-0.6705402864432308,2.5012875357470343);
\draw [line width=1.5pt,color=qqwuqq] (-0.6705402864432308,2.5012875357470343)-- (-1.253979922722781,2.162760817429465);
\draw [line width=1.5pt,color=qqwuqq] (-1.7073164103821616,3.866547523418634)-- (-1.9108335153160279,4.327451286347516);
\draw [line width=1.5pt,color=qqwuqq] (-1.7073164103821616,3.866547523418634)-- (-2.2090037035543126,3.8099068167138586);
\draw [line width=1.5pt,color=qqwuqq] (3.0986543432379055,4.339497065877348)-- (3.3011609250207803,3.877310343362015);
\draw [line width=1.5pt,color=qqwuqq] (3.3011609250207803,3.877310343362015)-- (4.092902422466992,3.4175870524042975);
\draw [line width=1.5pt,color=qqwuqq] (4.092902422466992,3.4175870524042975)-- (4.335057921455794,2.5052403341346015);
\draw [line width=1.5pt,color=qqwuqq] (4.335057921455794,2.5052403341346015)-- (5.002843118003373,1.8364435422822067);
\draw [line width=1.5pt,color=qqwuqq] (5.002843118003373,1.8364435422822067)-- (5.001428751928856,0.9228678690638532);
\draw [line width=1.5pt,color=qqwuqq] (5.001428751928856,0.9228678690638532)-- (5.299443485205924,0.5153319460629298);
\draw [line width=1.5pt,color=qqwuqq] (5.005665251270074,3.659332499738415)-- (5.006707316005739,4.332429097270548);
\draw [line width=1.5pt,color=qqwuqq] (5.9176690915411125,3.410827599854592)-- (5.005665251270074,3.659332499738415);
\draw [line width=1.5pt,color=qqwuqq] (5.67269145928561,2.5002853606680797)-- (5.9176690915411125,3.410827599854592);
\draw [line width=1.5pt,color=qqwuqq] (5.67269145928561,2.5002853606680797)-- (6.256024097925899,2.16157430254651);
\draw [line width=1.5pt,color=qqwuqq] (6.710824955061843,3.8646799821152285)-- (7.212604710645876,3.80781665903009);
\draw [line width=1.5pt,color=qqwuqq] (6.710824955061843,3.8646799821152285)-- (6.914760288773572,4.325361128663749);
\draw [line width=1.5pt,color=qqwuqq] (0.798947876881259,-0.4624223717166378)-- (0.29817018823828434,-0.5175444696336591);
\draw [line width=1.5pt,color=qqwuqq] (1.5898512867362076,-0.9201886931086616)-- (0.798947876881259,-0.4624223717166378);
\draw [line width=1.5pt,color=qqwuqq] (2.5002122928059927,-0.6745384371024677)-- (1.5898512867362076,-0.9201886931086616);
\draw [line width=1.5pt,color=qqwuqq] (3.4107279224799263,-0.9201886931086611)-- (2.5002122928059927,-0.6745384371024677);
\draw [line width=1.5pt,color=qqwuqq] (3.4107279224799263,-0.9201886931086611)-- (4.201343193093247,-0.46242237171663697);
\draw [line width=1.5pt,color=qqwuqq] (4.201343193093247,-0.46242237171663697)-- (4.702155578127695,-0.5175444696336581);
\draw [line width=1.5pt,color=qqwuqq] (1.8331860847904569,-1.8335855761728037)-- (1.2506832180798395,-2.170854794724038);
\draw [line width=1.5pt,color=qqwuqq] (1.8331860847904569,-1.8335855761728037)-- (2.500787226943546,-2.5013322032307523);
\draw [line width=1.5pt,color=qqwuqq] (2.500787226943546,-2.5013322032307523)-- (3.1679680585632295,-1.8335855761728035);
\draw [line width=1.5pt,color=qqwuqq] (3.1679680585632295,-1.8335855761728035)-- (3.7506832180798395,-2.1708547947240375);
\draw [line width=1.5pt,color=qqwuqq] (2.2031962479213947,-3.824165119814417)-- (2.501075366185172,-3.4168648460147995);
\draw [line width=1.5pt,color=qqwuqq] (2.501075366185172,-3.4168648460147995)-- (2.7992108580319823,-3.8241651198144164);
\draw [line width=0.8pt,color=wwqqcc] (5.,0.)-- (7.512048195851799,4.32314860509302);
\draw [line width=0.8pt,color=wwqqcc] (6.298916173719263,2.2353900907713675) -- (6.350930111357858,2.10642734795504);
\draw [line width=0.8pt,color=wwqqcc] (6.298916173719263,2.2353900907713675) -- (6.161118084493937,2.216721257137979);
\draw [line width=0.8pt,color=wwqqcc] (-2.507959845445562,4.32552163485893)-- (0.,0.);
\draw [line width=0.8pt,color=wwqqcc] (-1.211157653645618,2.088904510805021) -- (-1.3489380312399233,2.107703614330255);
\draw [line width=0.8pt,color=wwqqcc] (-1.211157653645618,2.088904510805021) -- (-1.159021814205639,2.2178180205286746);
\draw [line width=0.8pt,color=ccqqqq] (7.512048195851799,4.32314860509302)-- (2.501366436159678,4.341709589448077);
\draw [line width=0.8pt,color=ccqqqq] (4.835963054049218,4.333061582374747) -- (4.921741782594464,4.442509508223267);
\draw [line width=0.8pt,color=ccqqqq] (4.835963054049218,4.333061582374747) -- (4.920928587460493,4.222981171422029);
\draw [line width=0.8pt,color=ccqqqq] (5.0067073160057385,4.332429097270548) -- (5.092486044550984,4.441877023119067);
\draw [line width=0.8pt,color=ccqqqq] (5.0067073160057385,4.332429097270548) -- (5.091672849417012,4.222348686317829);
\draw [line width=0.8pt,color=ccqqqq] (0.,0.)-- (2.501366436159679,-4.341709589448076);
\draw [line width=0.8pt,color=ccqqqq] (1.335919915700631,-2.318803124918012) -- (1.1981919260512512,-2.299623979720105);
\draw [line width=0.8pt,color=ccqqqq] (1.335919915700631,-2.318803124918012) -- (1.388411207729219,-2.1900339399219453);
\draw [line width=0.8pt,color=ccqqqq] (1.2506832180798395,-2.1708547947240384) -- (1.1129552284304598,-2.1516756495261316);
\draw [line width=0.8pt,color=ccqqqq] (1.2506832180798395,-2.1708547947240384) -- (1.3031745101084276,-2.0420856097279714);
\draw [line width=0.8pt,color=qqwwzz] (2.501366436159679,-4.341709589448076)-- (5.,0.);
\draw [line width=0.8pt,color=qqwwzz] (3.793266593484114,-2.096860491589949) -- (3.84581875068081,-2.2256048488152484);
\draw [line width=0.8pt,color=qqwwzz] (3.793266593484114,-2.096860491589949) -- (3.6555476854788673,-2.116104740632827);
\draw [line width=0.8pt,color=qqwwzz] (3.708099842675563,-2.2448490978581273) -- (3.760651999872259,-2.3735934550834266);
\draw [line width=0.8pt,color=qqwwzz] (3.708099842675563,-2.2448490978581273) -- (3.5703809346703164,-2.264093346901005);
\draw [line width=0.8pt,color=qqwwzz] (3.878433344292664,-1.948871885321772) -- (3.93098550148936,-2.0776162425470712);
\draw [line width=0.8pt,color=qqwwzz] (3.878433344292664,-1.948871885321772) -- (3.7407144362874174,-1.9681161343646498);
\draw [line width=0.8pt,color=qqwwzz] (2.501366436159678,4.341709589448077)-- (-2.507959845445562,4.32552163485893);
\draw [line width=0.8pt,color=qqwwzz] (-0.08866897557499566,4.333339726262402) -- (-0.0036514150743587885,4.443379960494714);
\draw [line width=0.8pt,color=qqwwzz] (-0.08866897557499566,4.333339726262402) -- (-0.002941994211525561,4.223851263812292);
\draw [line width=0.8pt,color=qqwwzz] (0.08207556628911176,4.333891498044605) -- (0.16709312678974864,4.443931732276918);
\draw [line width=0.8pt,color=qqwwzz] (0.08207556628911176,4.333891498044605) -- (0.16780254765258185,4.224403035594495);
\draw [line width=0.8pt,color=qqwwzz] (-0.2594135174391026,4.332787954480198) -- (-0.17439595693846574,4.442828188712511);
\draw [line width=0.8pt,color=qqwwzz] (-0.2594135174391026,4.332787954480198) -- (-0.17368653607563253,4.223299492030088);
\draw [line width=0.8pt,color=ccqqqq] (2.501366436159678,4.341709589448077)-- (0.,0.);
\draw [line width=0.8pt,color=qqwwzz] (5.,0.)-- (2.501366436159678,4.341709589448077);
\draw [line width=0.8pt,color=wwqqcc] (5.,0.)-- (0.,0.);
\begin{scriptsize}
\draw[color=uuuuuu] (2.2899673281350843,1.471560161268026) node {$+++$};
\draw[color=uuuuuu] (5,3.0814456762245417) node {$++-$};
\draw[color=uuuuuu] (-0.1,3.097707146072587) node {$-++$};
\draw[color=uuuuuu] (2.355013207527267,-1.309151182747774) node {$+-+$};
\draw[color=ccqqqq] (2.2,2.6) node {$x=0$};
\draw[color=qqwwzz] (3.1030408205373665,2.1220189551898505) node {$z=0$};
\draw[color=wwqqcc] (2.30622879798313,0.3332572719048329) node {$y=0$};
\end{scriptsize}
\end{tikzpicture}
        \end{center}
    \end{minipage}
\end{tabular}
\vspace{-2cm}
\caption{The top left figure is a subdivision of the Newton polytope
  corresponding to a stable polynomial, homogeneous of degree 3 in
  three variables, with positive coefficients. The labels show the
  valuations of the coefficients.  They form an $M$-convex function on
  these lattice points because all the edges of the corresponding regular subdivision are in
  directions $e_i - e_j$.  The bottom left figure is the
  exponentiation of the tropical curve defined by the stable
  polynomial, and the figure on the right is the {\em real
    tropicalization} in $\R\P^2$ (as in~\cite{JSY}) of the positively hyperbolic curve. Topologically it
  consists of nested ovals.
Compare with figures
  in~\cite{SpeyerHives}. }
  \label{fig:honeycomb}
\end{figure}

It is also natural to consider signed analogues of tropical varieties, as in Figure~\ref{fig:honeycomb}. 
We can call a signed tropical variety of codimension $c$ positively hyperbolic if 
for every positive tropical linear space $L$ of dimension $c-1$, 
the tropical variety meets every signed tropical linear space of dimension $c$ containing $L$
in the correct number of real points. For example, this is satisfied by the tropicalization of the cubic curve in  Example~\ref{ex:reciprocalPlane}, 
as seen in Figure~\ref{fig:SignedRecipLine}.
\begin{figure}[ht]
\begin{center}
\includegraphics[height=2in]{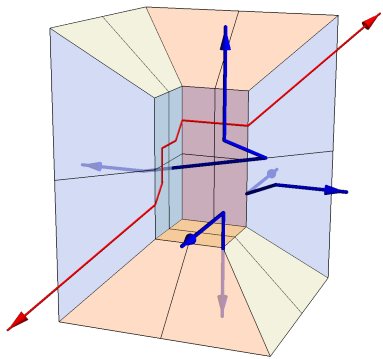}
\end{center}
\caption{The signed tropical variety of the reciprocal plane in Example~\ref{ex:reciprocalPlane} (blue) 
showing three real intersection points with a real tropical plane containing the tropicalization of 
a line (red), with respect to which it is hyperbolic.} 
\label{fig:SignedRecipLine}
\end{figure}
 
\begin{question}
What combinatorial or topological properties of signed tropical varieties are implied by tropical positive hyperbolicity? 
\end{question}

Another direction of interest is the study of which of these tropical objects can be realized as tropicalizations of positively hyperbolic varieties. 
One concrete instance of this would be the following generalization of the titular result of \cite{ARW2}:

\begin{question}
Are all positively oriented valuated matroids realizable as the tropicalization of a positive linear subspace in $K^n$?
\end{question}

%
%
%\begin{enumerate}
%%\item What are stability preservers? (Characterization of linear maps between varieties/coordinate rings) 
%%\item Are lower dimension cones of the tropicalization of a positively hyperbolic variety non-crossing? (This is true for linear varieties by Theorem!) 
%%A cone is non-crossing if the matroid of its affine span is non-crossing.  What about for tropically smooth ones? 
%%\item If a complex variety is stable (for example $V(f+ig)$ for $f,g\in \R[x_1, \hdots, x_n]$, is its real part (or the Zariski closure of its real part) positively hyperbolic? 
%%\item Compare tropicalizations of various characterizations of
%%  positive hyperbolicity, for example, we can consider tropical
%%  varieties satisfying intersection conditions with positive tropical
%%%  linear spaces.  Do they have to have the non-crossing or positroid condition?
%%\item We need to consider tropical linear spaces of matroids over the tropical real hyperfield.  Are all valuated positroids realizable?
%%\item Can we also define tropical hyperbolicity of a signed tropical hypersurface (or variety)? See what this says about hyperbolic matroids.
%%\item Do positively hyperbolic varieties have to be real?
%\end{enumerate}

\bibliography{PosHypRefs}
\bibliographystyle{alpha}

\end{document}